\newcommand\Defn[1]{\textbf{\color{black}#1}}
\newcommand\eps{\varepsilon}
\renewcommand\emptyset{\varnothing}
\renewcommand\phi{\varphi}
\newcommand\Z{\mathbb{Z}}               
\newcommand\Znn{\Z_{\ge0}}
\newcommand\Q{\mathbb{Q}}               
\newcommand\R{\mathbb{R}}               
\newcommand\inner[1]{\langle{#1}\rangle}
\newcommand{\hosymbol}{
\begin{tikzpicture}
\draw[very thin] (0,0) circle (0.05cm);
\filldraw[black] (0,0)--(0,-0.05cm) arc (-90:90:0.05cm) -- (0,0);
\end{tikzpicture}
}
\newcommand\HO[1]{#1^{\hosymbol}}
\newcommand\half[2]{\mathbb{H}_{#1}{#2}}
\newcommand\Ihalf[2]{\mathbb{H}^\infty_{#1}{#2}}
\newcommand\Cay{\mathrm{Cay}}%
\renewcommand\l{\lambda}
\newcommand\LL{\Lambda}
\newcommand\PolL{\mathcal{P}(\LL)}
\newcommand\Pol[1]{\mathcal{P}(#1)}
\newcommand\vol{\mathrm{V}}
\newcommand\dvol{\mathrm{E}}
\DeclareMathOperator*{\relint}{relint}
\DeclareMathOperator*{\aff}{aff}
\DeclareMathOperator*{\conv}{conv}
\DeclareMathOperator*{\cone}{cone}
\newtheorem*{rep@theorem}{\rep@title}
\newcommand{\newreptheorem}[2]{%
\newenvironment{rep#1}[1]{%
 \def\rep@title{#2 \ref{##1}}%
 \begin{rep@theorem}}%
 {\end{rep@theorem}}}
\newtheorem{thm}{Theorem}[section]
\newtheorem*{thm*}{Theorem}
\newtheorem{cor}[thm]{Corollary}
\newtheorem{lem}[thm]{Lemma}
\newtheorem{prop}[thm]{Proposition}
\newtheorem{conj}{Conjecture}
\theoremstyle{definition}
\newcommand\0{\boldsymbol{0}}
\renewcommand\1{\boldsymbol{1}}
\renewcommand\k{\mathbf{k}}
\newcommand\x{\mathbf{x}}
\newcommand\y{\mathbf{y}}
\newcommand\Zyl{\mathcal{Z}^{\hosymbol}}
\newcommand\Cls[1]{[\![{#1}]\!]}
\newcommand\Konv{\mathcal{K}}
\newcommand\Mix{\mathbf{M}}
\newcommand\CM{\mathbf{CM}}
\newcommand\MV{\mathrm{MV}}
\renewcommand\a{\alpha}
\newcommand\Npolys{r}
\title{Combinatorial Mixed Valuations}
\author{Katharina Jochemko}
\address{Department of Mathematics, %
Royal Institute of Technology (KTH), %
Sweden}
\email{jochemko@kth.se}
\author{Raman Sanyal}
\address{Institut für Mathematik, %
Goethe-Universität Frankfurt, %
Germany}
\email{sanyal@math.uni-frankfurt.de}
\keywords{combinatorial mixed valuations, mixed volumes,
translation-invariant valuations, combinatorial positivity, discrete 
mixed volume}
\subjclass[2010]{52B45, 05A10, 52B20, 52A39}
\date{\today}
\begin{document}

\maketitle

\begin{abstract}
    \emph{Combinatorial} mixed valuations associated to
    trans\-lation-invariant valuations on polytopes are introduced. In
    contrast to the construction of mixed valuations via polarization,
    combinatorial mixed valuations reflect and often inherit properties of
    inhomogeneous valuations. In particular, it is shown that under mild
    assumptions combinatorial mixed valuations are monotone and hence
    nonnegative.  For combinatorially positive valuations, this has strong
    computational implications. Applied to the discrete volume, the results
    generalize and strengthen work of Bihan (2015) on discrete mixed volumes.
    For rational polytopes, it is proved that combinatorial mixed monotonicity
    is equivalent to monotonicity.  Stronger even, a conjecture is
    substantiated that combinatorial mixed monotonicity implies the homogeneous
    monotonicity in the sense of Bernig--Fu (2011).
\end{abstract}

\section{Introduction} \label{sec:intro}
A momentous property of the $d$-dimensional Euclidean volume $\vol_d$ is that
for convex polytopes (and, more generally, convex bodies) $P_1,\dots,P_\Npolys
\subset \R^d$, the function $\vol_d(\l_1 P_1 + \cdots + \l_\Npolys P_\Npolys)$
agrees with a multivariate homogeneous polynomial of degree $d$ for all
$\l_1,\dots,\l_\Npolys \ge 0$.  For $\Npolys=d$, the coefficient of
$\l_1\l_2\cdots\l_d$, normalized by $\frac{1}{d!}$, is called the \Defn{mixed
volume} of $P_1,\dots,P_d$ and is denoted by $\MV_d(P_1,\dots,P_d)$. Mixed
volumes arise in virtually all mathematical disciplines and, most importantly,
give rise to the deep theory of geometric inequalities; see, for example,
Schneider~\cite{SchneiderNew}.  Among the most fundamental properties, one
trivially observes that $\MV_d$ is symmetric and Minkowski additive in each
argument and, not so trivially, that 
\begin{equation}\label{eqn:MV_mon}
    0 \ \le \ \MV_d(P_1,\dots,P_d) \ \le \ \MV_d(Q_1,\dots,Q_d)
\end{equation}
for all polytopes $P_i \subseteq Q_i$ for $i = 1,\dots,d$.

For the \Defn{discrete volume} $\dvol(P) := |P \cap \Z^d|$,
Bernstein~\cite{Bernstein} and McMullen~\cite{McMullen74,mcmullenEuler} showed that for
polytopes $P_1,\dots,P_\Npolys \subset \R^d$ with vertices in the lattice
$\Z^d$, the function $\dvol_{P_1,\dots,P_\Npolys}(n_1,\dots,n_\Npolys) =
\dvol(n_1P_1 + \cdots + n_\Npolys P_\Npolys)$ also agrees with a multivariate
polynomial---the multivariate Ehrhart polynomial---for all $n_1,\dots,n_\Npolys \in
\Znn$. As for the volume, this result sets the stage for a \emph{mixed}
Ehrhart theory; see, for example,~\cite{HKST, KatzStapledon,
SteffensTheobald}.  Suitable polarizations of $\dvol_{P_1,\dots,P_\Npolys}$
give rise to a \emph{discrete} counterpart to the mixed volume. By
construction $\Mix\dvol(P_1,\dots,P_\Npolys)$ is also symmetric and Minkowski
additive but the nonnegativity and monotonicity properties~\eqref{eqn:MV_mon}
are genuinely lost. This is due to the fact that the discrete volume, unlike
$\vol_d$, is \emph{not} a homogeneous valuation and thus cannot be treated as
such.

The purpose of this paper is to introduce the notion of a \emph{combinatorial}
mixed valuation that extends the notion of mixed volume with many of its
favorable properties to the class of $\LL$-valuations: Let $\LL \subseteq
\R^d$ be a lattice or vector subspace over a subfield of $\R$ and write
$\PolL$ for the collection of polytopes with vertices in $\LL$. A
\Defn{$\LL$-valuation} is a map $\phi : \PolL \rightarrow G$ taking values in
an abelian group $G$ such that $\phi(\emptyset) = 0$ and 
\begin{equation}\label{eqn:val}
    \phi(P \cup Q) \ = \ \phi(P) + \phi(Q) - \phi(P\cap Q),
\end{equation}
for all $P,Q \in \PolL$ such that with $P\cup Q, P \cap Q \in \PolL$ and
$\phi(P + t) = \phi(P)$ for all $t \in \LL$. McMullen~\cite{mcmullenEuler}
showed that for a $\LL$-valuation $\phi$, the map $\phi_P(n) := \phi(nP)$
agrees with a polynomial of degree at most $\dim P$. Using the fact that
$\phi^{+Q}(P) := \phi(P+Q)$ is a $\LL$-valuation for fixed $Q \in \PolL$, it
can be shown (see, e.g.~\cite{JS15}) that 
\[
    \phi_{P_1,\dots,P_\Npolys}(n_1,\dots,n_\Npolys) \ := \ \phi(n_1P_1 +
    \cdots + n_\Npolys P_\Npolys)
\]
agrees with a multivariate polynomial. 
For integers $\Npolys \ge 0$, let $[\Npolys]$ denote the set $\{1,2,\ldots, r\}$. In particular, $[0]=\emptyset$. We define the \Defn{$\Npolys$-th
combinatorial mixed valuation} associated to $\phi$ by
\begin{equation}\label{eqn:DM}
    \CM_\Npolys\phi(P_1,\dots,P_\Npolys) \ := \ \sum_{I \subseteq [\Npolys]}
    (-1)^{\Npolys - |I|}
    \phi(P_I),
\end{equation}
for $P_1,\dots,P_\Npolys \in \PolL$ and where $P_I := \sum_{i \in I} P_i$ is
the Minkowski sum and $P_\emptyset := \{0\}$. By convention $\CM_0\phi =
\phi(\{0\})$ and $\CM_\Npolys\phi(P_1,\dots,P_\Npolys) = 0$ for all choices of
$\Npolys > d$ polytopes; see Corollary~\ref{cor:zero}.  We drop the index
$\Npolys$ and simply write $\CM\phi$ when no confusion arises.  Clearly,
$\CM_\Npolys\phi$ is symmetric and a $\LL$-valuation in each of its arguments.
In Theorem~\ref{thm:repr} we show that, like the mixed volume, the
combinatorial mixed volume can be characterized by a universal property.

For $\phi = \vol_d$ and $r=d$, our definition recovers the usual mixed volume
$\CM\vol_d(P_1,\dots,P_d)  = d! \, \MV_d(P_1,\dots,P_d)$ and it was shown by
Bernstein~\cite{Bernstein} that if $P_1,\dots,P_d \subset \R^d$ are lattice polytopes, then 
\[
    \CM\dvol(P_1,\dots,P_d) \ = \ \sum_{I \subseteq [d]} (-1)^{d-|I|} E(P_I) \
    = \ d! \, \MV_d(P_1,\dots,P_d);
\]
see~\cite[Cor.~2.3]{HKST}. For $r < d$, $\CM\dvol(P_1,\dots,P_\Npolys)$ was
investigated
by Bihan~\cite{Bihan} under the name of \emph{discrete mixed volume} in the
context of fewnomial bounds and tropical intersection theory. In particular,
using \emph{irrational mixed decompositions} and an ingenious but involved
argument, Bihan showed that the discrete mixed volume is always nonnegative.
Assume that the value group $G$ is partially ordered.  We call a
$\LL$-valuation $\phi : \PolL \rightarrow G$ \Defn{combinatorially mixed
monotone} (or \Defn{CM-monotone}, for short) if 
\begin{equation}\label{eqn:CMmono}
    \CM\phi(P_1,\dots,P_\Npolys) \ \le \ \CM\phi(Q_1,\dots,Q_\Npolys)
\end{equation}
for all $\Npolys \ge 0$ and $\LL$-polytopes $P_i \subseteq Q_i$ for
$i=1,\dots,\Npolys$.  Setting $Q_i = \{0\}$ for $i=1,\dots,\Npolys$ shows that
$\CM\phi(P_1,\dots,P_\Npolys) \ge 0$ for all $\Npolys$. In this paper, we
study the class of CM-monotone valuations in relation to the classes
introduced in~\cite{JS15}. Our main results give strong sufficient conditions
for CM-monotonicity.

In~\cite{JS15}, we introduced the notion of \Defn{weakly $h^*$-monotone
valuations} that are characterized by the property that $\phi(\relint S) +
\phi(\relint F) \ge 0$ for all $\LL$-simplices $S$ and facets $F \subset S$.
As is customary
\[
    \phi(\relint S ) \ = \ \sum_F (-1)^{\dim S - \dim F} \phi(F),
\]
where the sum is over all faces $F$ of $S$. Our first result is the following.
\begin{thm}\label{thm:CM_weakly}
    Let $\phi : \PolL \rightarrow G$ be a $\LL$-valuation with values in a
    partially ordered group $G$. If $\phi$ is weakly $h^*$-monotone, then
    $\phi$ is CM-monotone.
\end{thm}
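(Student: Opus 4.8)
The plan is to work in McMullen's polytope algebra $\Pi$ of $\LL$, where $[P][Q]=[P+Q]$, the class $[\{0\}]$ is the unit, and every $\LL$-valuation is a linear functional, so that, since $P_I=\sum_{i\in I}P_i$,
\[
    \CM_\Npolys\phi(P_1,\dots,P_\Npolys)\;=\;\phi\Bigl(\textstyle\prod_{i=1}^{\Npolys}\bigl([P_i]-[\{0\}]\bigr)\Bigr).
\]
For $\LL$-polytopes $P_i\subseteq Q_i$ put $w_j:=\prod_{i<j}([Q_i]-[\{0\}])\prod_{i>j}([P_i]-[\{0\}])$; then $\prod_i([Q_i]-[\{0\}])-\prod_i([P_i]-[\{0\}])=\sum_{j=1}^{\Npolys}w_j([Q_j]-[P_j])$. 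Writing $z(S,F):=[\relint S]+[\relint F]$ for an $\LL$-simplex $S$ with facet $F$ — so that weak $h^*$-monotonicity of $\phi$ says precisely $\phi(z(S,F))\ge0$ — and letting $\mathcal{C}\subseteq\Pi$ be the convex cone generated by all such $z(S,F)$, it suffices to show that each $w_j([Q_j]-[P_j])$ lies in $\mathcal{C}$: a weakly $h^*$-monotone $\phi$ is then nonnegative on $\mathcal{C}$, hence CM-monotone. (As $\CM\phi$ is a $\LL$-valuation in each slot it extends to $\Pi$ there, so the first argument below may be a polyhedral complex.)

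The argument breaks into three parts. First, $[Q]-[P]\in\mathcal{C}$ whenever $P\subseteq Q$: build $Q$ from $P$ by adjoining $\LL$-simplices $\sigma$ one at a time, each glued to the current complex along a union $F_1\cup\dots\cup F_j$ of at most $\dim\sigma$ of the facets of $\sigma$, so one only needs $[\sigma]-[F_1\cup\dots\cup F_j]\in\mathcal{C}$; but this equals $\sum_H[\relint H]$, the sum over faces $H$ of $\sigma$ containing the vertices $v_1,\dots,v_j$ opposite $F_1,\dots,F_j$, and pairing up these $H$'s by adding or removing one fixed remaining vertex reassembles the sum into terms $z(\cdot,\cdot)$. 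In particular (the case $\Npolys=1$) weak $h^*$-monotonicity implies $\phi$ is monotone. Second — the key lemma — $\bigl([T]-[\{0\}]\bigr)\cdot z(S,F)\in\mathcal{C}$ for every $\LL$-polytope $T$ and simplex--facet pair $(S,F)$. Third, granting the key lemma, induction on the number of factors gives $w_j\cdot z(S,F)\in\mathcal{C}$, and applying this to $[Q_j]-[P_j]=\sum_\alpha c_\alpha\,z(S_\alpha,F_\alpha)$ with $c_\alpha\ge0$ (first part) shows $w_j([Q_j]-[P_j])\in\mathcal{C}$, as needed.

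The main obstacle is the key lemma. Here $\bigl([T]-[\{0\}]\bigr)z(S,F)=\bigl([\relint S]\cdot[T]-[\relint S]\bigr)+\bigl([\relint F]\cdot[T]-[\relint F]\bigr)$ and $[\relint S]\cdot[T]=\sum_{H\le S}(-1)^{\dim S-\dim H}[H+T]$, so I would fix a triangulation of $S+T$ refining a fine mixed subdivision (from a generic lifting — keeping all cell vertices in $\LL$, in the spirit of Bihan's irrational mixed decompositions), expand the four terms as alternating sums of classes $[\relint C]$ of cells, and track the cancellation. What should remain is a signed sum of relative interiors of simplices organized so that each simplex carrying a genuinely mixed contribution is paired with a prescribed facet of itself, producing a $z(\cdot,\cdot)$, while the leftover cells assemble into differences $[\sigma']-[\text{union of facets}]$ lying in $\mathcal{C}$ by the first part. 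Checking that this pairing is well defined and accounts for every term is the technical heart; the pattern is already visible for $S$ and $T$ two transverse segments, where $S+T$ is a square split into two triangles, the triangle carrying the mixed contribution pairing with the dividing diagonal into a $z(\cdot,\cdot)$ and the other triangle, glued along that diagonal, contributing a term $[\sigma']-[\text{facet}]$. One should finally note that the cone combinations produced are a priori rational, so one either assumes the value group $G$ is unperforated (or a rational vector space) or, as I expect one can since all the identities above are over $\Z$, arranges the decompositions to use nonnegative integer coefficients.
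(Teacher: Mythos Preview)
Your telescoping reduction and the induction in Part~3 are sound, and Part~1 is essentially the content of \cite[Thm.~5.1]{JS15}. But Part~2 --- the key lemma $([T]-1)\cdot z(S,F)\in\mathcal{C}$ --- is where the entire difficulty lives, and what you have written is not a proof. ``Track the cancellation \dots\ what should remain is a signed sum organized so that each simplex is paired with a prescribed facet'' describes a hoped-for outcome, not an argument. Your single worked example has $S$ and $T$ transverse, so $S+T$ is an honest product and every mixed cell inherits a clean product structure; the whole difficulty is the non-transverse case, where $[\relint S]\cdot[T]=\sum_{H\le S}(-1)^{\dim S-\dim H}[H+T]$ is an alternating sum with no evident pairing mechanism. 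Note too that $\mathcal{C}$ is strictly larger than the cone generated by the differences $[Q]-[P]$ (there exist monotone valuations that are not weakly $h^*$-monotone), so a generator $z(S,F)$ need not be a nonnegative combination of such differences, and your key lemma does not simply reduce to the $r=2$ instance of the theorem.

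The paper does not go through a ``multiplication preserves $\mathcal{C}$'' statement. It introduces intermediate cones $\Zyl_k$ of \emph{half-open $k$-cylinders} (exact Minkowski sums of $k$ simplices, made half-open from a generic direction), proves the chain $\Zyl_d\subseteq\cdots\subseteq\Zyl_1=\mathcal{C}$, and then shows in one stroke --- via a fine mixed dissection of $Q_1+\cdots+Q_r$ extending one of $P_1+\cdots+P_r$ (obtained from the Cayley polytope), followed by a half-open decomposition from a generic interior point --- that the coefficient of $\binom{n_1}{1}\cdots\binom{n_r}{1}$ in $[n_1Q_1+\cdots+n_rQ_r]-[n_1P_1+\cdots+n_rP_r]$ lies in $\Zyl_r\subseteq\mathcal{C}$. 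Because the half-open pieces are disjoint, all coefficients are nonnegative \emph{integers}, so the unperforation issue you flag never arises. If you want to salvage your route, the natural move is to trade the generators $z(S,F)$ for half-open simplices (legitimate since $\Zyl_1=\mathcal{C}$) and prove $([T]-1)\cdot[\HO{S}]\in\Zyl_1$ directly --- but carrying that out requires essentially the same fine-mixed-dissection analysis that the paper performs once for all $r$ factors simultaneously.
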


Since the discrete volume is nonnegative on relative interiors (that is,
$\dvol$ is combinatorially positive), Theorem~\ref{thm:CM_weakly} yields  a
strengthening of Bihan's result~\cite[Thm.~1.2(2)]{Bihan}.
\begin{cor}\label{cor:Bihan}
    The discrete mixed volume $\CM\dvol(P_1,\dots,P_\Npolys)$ is monotone and
    hence nonnegative.
\end{cor}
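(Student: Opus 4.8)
The plan is to derive Corollary~\ref{cor:Bihan} as an immediate consequence of Theorem~\ref{thm:CM_weakly} by verifying that the discrete volume $\dvol$ is weakly $h^*$-monotone. Concretely, I would argue as follows. By Theorem~\ref{thm:CM_weakly}, it suffices to check that for every $\LL$-simplex $S$ (here $\LL = \Z^d$, so every lattice simplex) and every facet $F \subset S$, one has $\dvol(\relint S) + \dvol(\relint F) \ge 0$. But $\dvol(\relint S) = |\relint S \cap \Z^d|$ and $\dvol(\relint F) = |\relint F \cap \Z^d|$ are both cardinalities of sets of lattice points, hence nonnegative integers; their sum is trivially nonnegative. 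This establishes that $\dvol$ is weakly $h^*$-monotone with values in the partially ordered group $(\Z, \le)$, so Theorem~\ref{thm:CM_weakly} applies and $\dvol$ is CM-monotone.

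The nonnegativity statement then follows from the observation already recorded in the introduction: setting $Q_i = \{0\}$ for all $i$ in the defining inequality~\eqref{eqn:CMmono} of CM-monotonicity gives $\CM\dvol(P_1,\dots,P_\Npolys) \le \CM\dvol(\{0\},\dots,\{0\})$, and the right-hand side equals $\CM_\Npolys\dvol$ evaluated at a collection of points, which is $0$ for $\Npolys \ge 1$ (and $\dvol(\{0\}) = 1 > 0$ for $\Npolys = 0$) by the convention in~\eqref{eqn:DM} together with Corollary~\ref{cor:zero}. Hence $\CM\dvol(P_1,\dots,P_\Npolys) \ge 0$, and monotonicity is exactly~\eqref{eqn:CMmono}. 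One should also remark that this recovers and strengthens Bihan's result, since Bihan only asserts nonnegativity whereas here we get the full monotone comparison $\CM\dvol(P_1,\dots,P_\Npolys) \le \CM\dvol(Q_1,\dots,Q_\Npolys)$ for $P_i \subseteq Q_i$.

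I do not anticipate any genuine obstacle in this corollary: the entire content is packaged into Theorem~\ref{thm:CM_weakly}, and the only thing to verify is the combinatorial positivity of $\dvol$, which is the elementary fact that a polytope contains a nonnegative number of lattice points. The one point deserving a sentence of care is that weak $h^*$-monotonicity is defined via the inclusion–exclusion expression $\phi(\relint S) = \sum_F (-1)^{\dim S - \dim F}\phi(F)$, and one must confirm that for $\phi = \dvol$ this alternating sum indeed equals the honest lattice-point count of the relative interior; this is the classical fact that Euler-characteristic-type inclusion–exclusion over the face lattice of a polytope computes the indicator of the relative interior (equivalently, it is the reciprocity underlying Ehrhart–Macdonald). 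Granting that identification, the proof is a two-line deduction.
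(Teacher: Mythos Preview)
Your approach is exactly the paper's: the corollary is deduced from Theorem~\ref{thm:CM_weakly} by observing that $\dvol$ is combinatorially positive (indeed $\dvol(\relint S) = |\relint S \cap \Z^d| \ge 0$), hence weakly $h^*$-monotone. Your remark that the inclusion--exclusion definition of $\phi(\relint S)$ really does compute the interior lattice-point count for $\phi=\dvol$ is a good point of care.

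There is, however, a small slip in your nonnegativity paragraph. You write ``setting $Q_i=\{0\}$ \dots\ gives $\CM\dvol(P_1,\dots,P_\Npolys) \le \CM\dvol(\{0\},\dots,\{0\})=0$'' and then conclude $\CM\dvol(P_1,\dots,P_\Npolys)\ge 0$, which does not follow. The roles are reversed: in~\eqref{eqn:CMmono} one needs $P_i\subseteq Q_i$, so to obtain a lower bound you should specialize the \emph{smaller} polytopes to a point. After translating so that $0\in Q_i$, take $P_i=\{0\}$; then CM-monotonicity gives $0=\CM\dvol(\{0\},\dots,\{0\}) \le \CM\dvol(Q_1,\dots,Q_\Npolys)$ for $\Npolys\ge 1$. (The paper's introductory sentence ``Setting $Q_i=\{0\}$'' is itself a slip of the pen; don't propagate it.) With this correction the argument is complete and matches the paper.
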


Di~Rocco, Haase, and Nill~\cite{DHN} interpret $\CM\dvol(P_1,\dots,P_\Npolys)$
as the \emph{motivic arithmetic genus} of a generic complete intersection with
prescribed Newton polytopes $P_1,\dots,P_\Npolys$.  CM-monotonicity implies
that the motivic arithmetic genus is monotone with respect to inclusion of
Newton polytopes.  Under the stronger assumption that $\phi$ is
combinatorially positive, we give lower bounds on $\CM\phi$ in
Section~\ref{ssec:bounds}.  Techniques from~\cite{DGH} generalize and yield
that checking whether $\CM\dvol$ is positive can be done in polynomial time.
Our proof of Theorem~\ref{thm:CM_weakly}, given in Section~\ref{ssec:cones},
casts the statement into the language of cones in McMullen's polytope
algebra~\cite{mcmullenPA}. In order to further popularize the polytope algebra
we give a brief, tailor-made introduction. For the discrete volume,
Corollary~\ref{cor:Bihan} can also be obtained by considering Cayley cones. We
sketch the proof in Section~\ref{ssec:cayley}.

In Section~\ref{sec:monotone} we show that there are strong relations between
\emph{monotonicity} and \emph{CM-monotonicity}.  In particular, we prove the
following result.
\begin{thm}\label{thm:Rd-mono}
    An $\R^d$-valuation $\phi : \Pol{\R^d} \rightarrow \R$ is monotone if and
    only if $\phi$ is CM-monotone.
\end{thm}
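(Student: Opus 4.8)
The plan is to prove both implications separately. The easy direction is that monotonicity implies CM-monotonicity: if $\phi$ is monotone on $\Pol{\R^d}$, then I would like to invoke Theorem \ref{thm:CM_weakly}, so it suffices to observe that a monotone $\R^d$-valuation is automatically weakly $h^*$-monotone. Indeed, for a simplex $S$ and a facet $F\subset S$, one has $\phi(\relint S) + \phi(\relint F) = \phi(S) - \sum_{F'\subsetneq S}(\text{contributions})$; more cleanly, using the identity $\phi(\relint S) + \phi(\relint F) = \sum_{G\,:\,F\subseteq G\subseteq S}(-1)^{\dim S-\dim G}\phi(G)$ together with an inclusion–exclusion rearrangement, one can write this as an alternating sum that a monotonicity/convexity argument forces to be nonnegative. (In fact, I expect the cleaner route is: monotone real-valued valuations on $\Pol{\R^d}$ are known to be nonnegative on relative interiors and satisfy the weak $h^*$-inequality — this is essentially a one-dimensional statement after slicing $S$ parallel to $F$ and using that $n\mapsto\phi(\relint(\tfrac1n S))$-type restrictions are polynomial with controlled sign.) Having established weak $h^*$-monotonicity, Theorem \ref{thm:CM_weakly} immediately gives CM-monotonicity.

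The substantive direction is the converse: CM-monotonicity implies monotonicity. Here the key observation is that ordinary monotonicity is the special case $\Npolys = 1$ of a weak form of the CM-monotonicity inequality — but not literally, since $\CM_1\phi(P) = \phi(P) - \phi(\{0\})$, so CM-monotonicity at $\Npolys=1$ gives $\phi(P)-\phi(\{0\}) \le \phi(Q)-\phi(\{0\})$, i.e.\ $\phi(P)\le\phi(Q)$ directly, which already is monotonicity. Wait — that seems too easy, so the real content must be that the definition of CM-monotonicity only asks the inequality for all $\Npolys$ \emph{simultaneously} but with the nested pairs $P_i\subseteq Q_i$; taking $\Npolys=1$ is legitimate and yields monotonicity at once. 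So the converse is in fact immediate from the definition. Therefore the theorem reduces entirely to the forward direction.

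Consequently, the heart of the proof is showing that a monotone $\R^d$-valuation $\phi:\Pol{\R^d}\to\R$ is weakly $h^*$-monotone, and then applying Theorem \ref{thm:CM_weakly}. The main obstacle is verifying the inequality $\phi(\relint S)+\phi(\relint F)\ge 0$ from monotonicity alone. My approach is to fix a simplex $S=\conv(v_0,\dots,v_{d})$ with $F=\conv(v_1,\dots,v_{d})$ the opposite facet, and to study the function $f(t) := \phi\big(\conv(F,\, v_0 + t(\bar v - v_0))\big)$ for $t\in[0,1]$, where $\bar v$ is the barycenter of $F$ (or more simply, interpolate $v_0$ toward $F$). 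By McMullen's polynomiality, suitable one-parameter families of dilates/translates give that $\phi$ restricted along such a pencil is polynomial; monotonicity pins down signs of finite differences, and a discrete-derivative / Newton-forward-difference computation expresses $\phi(\relint S)+\phi(\relint F)$ as a nonnegative combination of differences $\phi(S')-\phi(S'')$ with $S''\subseteq S'$. The bookkeeping — getting the inclusion–exclusion over faces of $S$ to line up with the telescoping of these differences — is where the care is needed; but since the ambient space is all of $\R^d$ (so every relevant subpolytope is again in $\Pol{\R^d}$ and monotonicity applies without lattice obstructions), no further subtlety arises, and the result follows.
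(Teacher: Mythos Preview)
Your identification of the easy direction is correct: CM-monotonicity implies monotonicity immediately by specialising to $\Npolys=1$. The problem is your proposed route for the other direction. You want to show that a monotone $\R^d$-valuation is weakly $h^*$-monotone and then invoke Theorem~\ref{thm:CM_weakly}. But this implication is simply false for $\LL=\R^d$: the intrinsic volumes $\nu_i$ for $0<i<d$ are monotone (indeed CM-monotone, by Corollary~\ref{cor:rigidmotionCM}), yet it is shown in~\cite{JS15} that they are \emph{not} weakly $h^*$-monotone; this is precisely the content of Corollary~\ref{cor:not_weak}. So no amount of bookkeeping with the pencil $t\mapsto\conv(F,v_0+t(\bar v-v_0))$ will produce the inequality $\phi(\relint S)+\phi(\relint F)\ge 0$ from monotonicity alone---there are explicit counterexamples.

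The paper's proof takes a completely different path for the implication monotone $\Rightarrow$ CM-monotone. One first extends $\phi$ to a monotone translation-invariant valuation on convex bodies (Lemma~\ref{lem:Kext}). The key analytic input is then the Bernig--Fu theorem (Theorem~\ref{thm:BF}), which guarantees that each homogeneous component $\phi_i$ is itself monotone; an inductive argument (Lemma~\ref{lem:bernig}) upgrades this to monotonicity of the associated mixed valuations $\Mix_i\phi_i$ in every argument. Finally, one uses the identity from~\cite{HKST} expressing $\CM_\Npolys\phi(P_1,\dots,P_\Npolys)$ as a nonnegative linear combination of terms $\Mix_{|\Ba|}\phi_{|\Ba|}(P_1^{\a_1},\dots,P_\Npolys^{\a_\Npolys})$. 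Thus the real work is borrowed from Bernig--Fu, not from Theorem~\ref{thm:CM_weakly}; for $\LL=\R^d$ the weak $h^*$-cone is strictly too small to capture all monotone valuations.
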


This is a combinatorial analog of the following deep result of Bernig and
Fu~\cite{BernigFu}.
\begin{thm}[{\cite[Thm.~2.12]{BernigFu}}]\label{thm:BF}
    Let $\phi$ be a translation-invariant valuation on convex bodies in $\R^d$
    and $\phi = \phi_0 + \cdots + \phi_d$ the decomposition into homogeneous
    components. Then $\phi$ is monotone if and only if $\phi_i$ is monotone
    for all $i=0,\dots,d$.
\end{thm}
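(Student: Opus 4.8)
The ``if'' direction is immediate, since a finite sum of monotone valuations is monotone; so the plan is to prove the converse. Assume $\phi$ is monotone. Recall that the decomposition $\phi=\phi_0+\cdots+\phi_d$ is available because $\phi$ is continuous, that each $\phi_i$ is then continuous, and that monotonicity of $\phi_i$ on convex bodies is equivalent to monotonicity on polytopes (approximate $K$ from inside and $L$ from outside). The first step is to use monotonicity not for one inclusion $K\subseteq L$ alone but for all of its dilates: as $\lambda K\subseteq\lambda L$ for $\lambda\ge 0$, homogeneity turns $\phi(\lambda K)\le\phi(\lambda L)$ into
\[
    p_{K,L}(\lambda)\ :=\ \sum_{i=0}^{d}\bigl(\phi_i(L)-\phi_i(K)\bigr)\lambda^{i}\ \ge\ 0\qquad\text{for all }\lambda\ge 0 .
\]
For the top component this is already enough: by McMullen's theorem $\phi_d=c\cdot\vol_d$ for some scalar $c$, and comparing leading terms as $\lambda\to\infty$ in $p_{K,L}$ for a full-dimensional $K\subsetneq L$ forces $c\ge 0$, so $\phi_d$ is monotone.

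The real difficulty is that $p_{K,L}\ge 0$ on $[0,\infty)$ does not by itself make the individual coefficients $\phi_i(L)-\phi_i(K)$ nonnegative (e.g.\ $\lambda^2-\lambda+1$), so a single inclusion is too little data and one must exploit the whole family of monotonicity inequalities coherently. A natural attempt is a dimension induction: the restriction $\phi|_H$ to a hyperplane $H$ is again a monotone, continuous, translation-invariant valuation whose McMullen components are the $(\phi_i)|_H$ (uniqueness of the decomposition, using $(\phi_d)|_H=0$), so the inductive hypothesis gives that every $(\phi_i)|_H$ is monotone. This only pins down $\phi_i$ on bodies lying in a proper affine subspace, and that is genuinely too weak: already in degree $d-1$, where $\phi_{d-1}(K)=\int_{S^{d-1}}f\,dS_{d-1}(K,\cdot)$ with $f\in C(S^{d-1})$, taking $f$ odd and non-linear yields a valuation that restricts to $0$ (hence monotone) on every hyperplane but need not be monotone on full-dimensional bodies. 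So the induction must be supplemented by a genuinely $d$-dimensional argument.

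The route I would take for that missing input is to use the structure theory of translation-invariant valuations. The mixed-volume functionals $K\mapsto\vol(K[k],A_1,\dots,A_{d-k})$ (with $A_1,\dots,A_{d-k}$ convex bodies) are continuous, translation-invariant, homogeneous of degree $k$, and monotone in $K$, and by McMullen's conjecture (Alesker's irreducibility theorem) their span is dense in the space of continuous translation-invariant valuations. The plan is to upgrade this to a statement about cones---that a monotone continuous translation-invariant valuation can be approximated by nonnegative combinations of such functionals (modulo the harmless degree-$0$ part). Granting this, $\phi$ is a limit of nonnegative combinations of mixed-volume functionals, and since taking the degree-$i$ homogeneous component commutes with limits and with nonnegative combinations, $\phi_i$ is exhibited as a limit of nonnegative combinations of monotone homogeneous valuations, hence is monotone. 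The main obstacle is precisely this positive refinement: moving from density of mixed-volume valuations to control of the monotone cone, the delicate point being to realize the Klain function / area-measure data of an arbitrary monotone homogeneous valuation by nonnegative combinations of mixed volumes. Everything else in the argument is bookkeeping.
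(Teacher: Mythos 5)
First, a point of orientation: the paper does not prove this statement at all---it is imported verbatim from Bernig--Fu \cite[Thm.~2.12]{BernigFu} and used as a black box (in Lemma~\ref{lem:bernig} and hence in Theorem~\ref{thm:Rd-mono}). So there is no in-paper proof to match; what the paper does prove is the \emph{related} Theorem~\ref{thm:CM_log}, which derives homogeneous monotonicity from the stronger hypothesis of CM-monotonicity.

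Your proposal has a genuine gap, and you name it yourself: the entire content of the theorem is concentrated in the ``positive refinement'' you ask for, namely that every monotone continuous translation-invariant valuation lies in the closed cone generated by the mixed-volume functionals $K \mapsto \vol(K[k],A_1,\dots,A_{d-k})$. That is not bookkeeping; it is a strictly stronger statement than Theorem~\ref{thm:BF} (it immediately implies it, since each generator is homogeneous and monotone), it does not follow from Alesker's irreducibility theorem (density of the \emph{span} says nothing about the cone), and to my knowledge it is not known to hold---Bernig--Fu's own proof does not proceed this way. Your preparatory steps are fine and correctly diagnosed: the dilation inequality $p_{K,L}(\lambda)\ge 0$ only controls the leading coefficient, and the hyperplane induction genuinely fails for the reason you give (an odd, nonlinear $f$ in $\phi_{d-1}(K)=\int_{S^{d-1}}f\,dS_{d-1}(K,\cdot)$ vanishes on all lower-dimensional bodies yet changes sign under $K\mapsto -K$). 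But diagnosing why easy arguments fail is not a substitute for the missing one. For contrast, the mechanism the paper uses in Theorem~\ref{thm:CM_log} to isolate $\phi_i$ is a \emph{shrinking} limit, $\eps^{-i}(\Cls{\eps P}-1)^i = \log(P)^i + \eps^i r_P(\eps)$, which extracts the degree-$i$ part as $\eps\to 0$; the price is that one must already know nonnegativity of $\phi$ on the \emph{mixed} differences $\prod_i(\Cls{Q_i}-1)-\prod_i(\Cls{P_i}-1)$, i.e.\ CM-monotonicity, which plain monotonicity does not supply---that is exactly the content of Conjecture~\ref{conj:mono}. So your attempt should be regarded as an (accurate) analysis of the obstacles plus an unproved, and possibly false, structural conjecture, not a proof.
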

We deduce Theorem~\ref{thm:Rd-mono} from a multivariate version of this result
(Lemma~\ref{lem:bernig}) and results of~\cite{HKST}. In fact,
Theorem~\ref{thm:Rd-mono} and the trivial observation that CM-monotone implies
monotone prompted the following conjecture.
\begin{conj}\label{conj:mono}
    Let $\phi$ be a $\Lambda$-valuation for any $\LL$. Then $\phi$ is monotone
    if and only if $\phi$ is CM-monotone.
\end{conj}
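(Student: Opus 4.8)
The ``only if'' direction is immediate: specializing \eqref{eqn:CMmono} to $\Npolys = 1$ and using $\CM_1\phi(P) = \phi(P) - \phi(\{0\})$ shows that every CM-monotone valuation is monotone. For the converse, the plan is to pass to the homogeneous decomposition and bootstrap off Theorem~\ref{thm:Rd-mono}. By McMullen's polynomiality~\cite{mcmullenEuler}, $\phi(nP) = \sum_{i=0}^{d}\phi_i(P)\,n^i$; Lagrange interpolation expresses each $\phi_i$ as a rational combination of the $\LL$-valuations $P\mapsto\phi(nP)$, so $\phi_i$ is again a $\LL$-valuation (with values in $G\otimes\Q$), and comparing $\phi(mnP)$ computed two ways gives $\phi_i(nP) = n^i\phi_i(P)$, i.e.\ $\phi_i$ is homogeneous of degree $i$, with $\phi = \phi_0 + \cdots + \phi_d$. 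Since $\phi\mapsto\CM_\Npolys\phi$ is additive, $\CM_\Npolys\phi = \sum_i\CM_\Npolys\phi_i$, and it suffices to treat each homogeneous component separately.

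So let $\psi$ be a monotone $\LL$-valuation, homogeneous of degree $i$. Expanding $\psi(P_1 + \cdots + P_\Npolys)$ multinomially and using the identity $\sum_{I\supseteq S}(-1)^{\Npolys - |I|} = [\,S = [\Npolys]\,]$, formula \eqref{eqn:DM} becomes
\[
  \CM_\Npolys\psi(P_1,\dots,P_\Npolys) \;=\; \sum_{\substack{\a_1,\dots,\a_\Npolys \ge 1\\ \a_1 + \cdots + \a_\Npolys = i}} \binom{i}{\a_1,\dots,\a_\Npolys}\, \Mix\psi(P_1^{(\a_1)},\dots,P_\Npolys^{(\a_\Npolys)}),
\]
a nonnegative integral combination of mixed valuations of $\psi$ in which every argument occurs (the sum is empty, hence $\CM_\Npolys\psi = 0$, once $\Npolys > i$, consistently with Corollary~\ref{cor:zero}, and $\CM_i\psi = i!\,\Mix\psi$). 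The remaining task is thus to show that $\Mix\psi$ is monotone in each argument. For $\LL = \R^d$ (and $G = \R$) this is already contained in Theorem~\ref{thm:Rd-mono}. For a lattice $\LL$ (say $\LL = \Z^d$), I would first use homogeneity to extend $\psi$ to $\Pol{\Q^d}$ via $\psi(P) := q^{-i}\psi(qP)$ (well defined, still monotone and homogeneous), and then extend to $\Pol{\R^d}$ by squeezing $\psi(P)$ between its values on rational inner and outer approximations of $P$; Theorem~\ref{thm:Rd-mono} then applies to the extension, and since Minkowski sums of $\LL$-polytopes remain in $\PolL$, $\CM_\Npolys$ of the extension restricts to $\CM_\Npolys\psi$ on $\PolL$.

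I expect the main obstacle to be that \emph{both} of the above reductions depend on a Bernig--Fu phenomenon that is presently available only over $\R^d$. First, the decomposition $\CM_\Npolys\phi = \sum_i\CM_\Npolys\phi_i$ is useful only once one knows that monotonicity of $\phi$ forces monotonicity of every component $\phi_i$ --- precisely Theorem~\ref{thm:BF}, whose multivariate form is Lemma~\ref{lem:bernig}; its proof is genuinely analytic, and in the lattice case is further complicated because the $\phi_i$ take values in $G\otimes\Q$ and live on rational, not lattice, polytopes. Second, a merely monotone homogeneous $\Q^d$-valuation need not extend continuously to $\R^d$, so the squeezing step is not automatic either and would itself require Bernig--Fu-type input. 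The natural strategy is therefore to rerun the separation argument of Bernig--Fu~\cite{BernigFu} uniformly in $\LL$, using the mixed Ehrhart results of \cite{HKST} to control the lattice case; carrying this out in full generality is exactly what is missing, which is why the equivalence is stated here only as a conjecture.
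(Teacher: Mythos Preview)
Your analysis is accurate: the statement is recorded in the paper as a conjecture, and no proof is given. The paper establishes only the special case $\LL = \R^d$, $G = \R$ (Theorem~\ref{thm:Rd-mono}), and the argument there is exactly the one you outline --- decompose into homogeneous components via Bernig--Fu (Theorem~\ref{thm:BF} and its multivariate corollary Lemma~\ref{lem:bernig}), then express $\CM_\Npolys\phi$ as a nonnegative combination of mixed valuations using \cite[Thm.~2.2]{HKST}. You have also correctly identified that both of your reductions hinge on the Bernig--Fu phenomenon, which is presently unavailable over a lattice; this is precisely the obstruction the paper leaves open.

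Where the paper goes beyond your discussion is in the converse direction: rather than attempting to deduce the conjecture from a hypothetical lattice Bernig--Fu, the paper proves (Theorem~\ref{thm:CM-BF}, via Theorem~\ref{thm:CM_log}) that the conjecture \emph{implies} Bernig--Fu, through a limiting argument with logarithms in the polytope algebra. Over a $\Q$-vector space $\LL$ one has $\eps^{-i}(\Cls{\eps P} - 1)^i = (\log P)^i + O(\eps)$, so CM-monotonicity of $\phi$ forces $\phi_i(Q) - \phi_i(P) \ge 0$ after sending $\eps \to 0$. This positions the conjecture as at least as strong as Bernig--Fu, complementing your observation that a Bernig--Fu-type statement over general $\LL$ would suffice to prove it. The paper also records partial evidence you do not mention: the conjecture holds for simple nonnegative valuations (Proposition~\ref{prop:DM_simple}), is stable under $\phi \mapsto \phi^{+Q}$ (Proposition~\ref{prop:addpolytope}), and holds for $\LL = \Z^2$.
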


To support Conjecture~\ref{conj:mono}, we proof that, if true, it implies
Theorem~\ref{thm:BF}.

\begin{thm}\label{thm:CM-BF}
    Conjecture~\ref{conj:mono} implies Theorem~\ref{thm:BF}.
\end{thm}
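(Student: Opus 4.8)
The plan is to deduce Theorem~\ref{thm:BF} from the case $\LL = \R^d$ of Conjecture~\ref{conj:mono}, using a one-parameter scaling argument to read off monotonicity of each homogeneous component of a valuation from its CM-monotonicity. One implication in Theorem~\ref{thm:BF} is immediate, since a sum of monotone maps is monotone: if all $\phi_i$ are monotone then so is $\phi = \phi_0 + \cdots + \phi_d$. For the converse I would start with a monotone, real-valued, translation-invariant valuation $\phi$ on convex bodies in $\R^d$ together with its homogeneous decomposition $\phi = \phi_0 + \cdots + \phi_d$. Its restriction to $\Pol{\R^d}$ is again a translation-invariant, monotone valuation, i.e.\ a monotone $\R^d$-valuation with values in the partially ordered group $\R$, so Conjecture~\ref{conj:mono} applies and shows that $\phi$ is CM-monotone on $\Pol{\R^d}$.

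The next ingredient is a bookkeeping lemma about how $\CM_k$ interacts with the homogeneous grading: if $\psi$ is a valuation that is homogeneous of degree $j$, then $\CM_k\psi(P_1, \dots, P_k) = 0$ whenever $j < k$. I would prove this by expanding $\psi(\lambda_1 P_1 + \cdots + \lambda_k P_k)$ as a homogeneous polynomial of degree $j$ in $\lambda_1, \dots, \lambda_k$ and invoking $\sum_{J \subseteq S} (-1)^{|J|} = 0$ for $S \neq \emptyset$. Applying this with $\psi = \phi_j$ and summing over $j$ yields, for all $\lambda \ge 0$ and all $\R^d$-polytopes $P_1, \dots, P_k$,
\[
    \CM_k\phi(\lambda P_1, \dots, \lambda P_k) \ = \ \sum_{j \ge k} \lambda^j \, \CM_k\phi_j(P_1, \dots, P_k),
\]
a polynomial in $\lambda$ whose term of lowest order is $\lambda^k \, \CM_k\phi_k(P_1, \dots, P_k)$.

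Now fix $k$ and $\R^d$-polytopes $P_i \subseteq Q_i$ for $i = 1, \dots, k$. Since $\lambda P_i \subseteq \lambda Q_i$ for every $\lambda \ge 0$, CM-monotonicity of $\phi$ gives $\CM_k\phi(\lambda P_1, \dots, \lambda P_k) \le \CM_k\phi(\lambda Q_1, \dots, \lambda Q_k)$ for all $\lambda \ge 0$; dividing by $\lambda^k$ and letting $\lambda \to 0^+$ retains only the lowest-order terms and gives $\CM_k\phi_k(P_1, \dots, P_k) \le \CM_k\phi_k(Q_1, \dots, Q_k)$. Hence $\CM_k\phi_k$ is monotone in each of its (symmetric) arguments. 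Finally, $\CM_k\phi_k(P, \dots, P) = \bigl( \sum_{i=0}^k \binom{k}{i} (-1)^{k-i} i^k \bigr) \phi_k(P) = k! \, \phi_k(P)$ by homogeneity and the Stirling identity $\sum_{i=0}^k \binom{k}{i} (-1)^{k-i} i^k = k!$, so telescoping over the $k$ arguments gives $\phi_k(P) \le \phi_k(Q)$ for all $\R^d$-polytopes $P \subseteq Q$, and hence for all convex bodies by continuity of $\phi_k$ together with polytopal approximation (from inside for $P$, from outside for $Q$). Since $k$ was arbitrary, all $\phi_i$ are monotone, which with the immediate implication proves Theorem~\ref{thm:BF}.

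The main obstacle is the grading lemma of the second paragraph, which is exactly what makes the degree-$k$ component appear as the lowest-order coefficient of the one-parameter polynomial $\lambda \mapsto \CM_k\phi(\lambda P_1, \dots, \lambda P_k)$: the contributions of the higher components $\phi_j$ with $j > k$ enter with the ``wrong'' sign and would block a naive inductive argument over $k$, but they are precisely the higher-order terms in $\lambda$ and therefore disappear in the limit $\lambda \to 0^+$. The remaining steps---the homogeneous decomposition and continuity of its summands, the Stirling computation, and polytopal approximation of convex bodies---are standard.
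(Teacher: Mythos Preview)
Your argument is correct and rests on the same idea as the paper's: apply CM-monotonicity to the scaled inclusions $\lambda P_i \subseteq \lambda Q_i$, divide by $\lambda^k$, and let $\lambda \to 0^+$ to isolate the degree-$k$ homogeneous part. The paper carries this out inside the polytope algebra via the logarithm, writing $(\Cls{\eps P}-1)^k = \eps^k (\log P)^k + O(\eps^{k+1})$ and using $\extPhi((\log P)^k) = k!\,\phi_k(P)$, so that the limit statement becomes the cone inclusion $\Mh{k} \subseteq \MM$ of Theorem~\ref{thm:CM_log}. You instead work directly with the homogeneous decomposition and your grading lemma $\CM_k\phi_j = 0$ for $j<k$, which is the elementary counterpart of the same expansion. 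Your route avoids the polytope-algebra machinery entirely and even yields monotonicity of $\CM_k\phi_k$ in each argument (not just on the diagonal); the paper's formulation, in return, gives a cleaner structural statement at the level of cones in $\PiL$. One minor simplification: the telescoping at the end is unnecessary, since setting $P_1=\cdots=P_k=P$ and $Q_1=\cdots=Q_k=Q$ already gives $k!\,\phi_k(P)\le k!\,\phi_k(Q)$ directly.
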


\textbf{Acknowledgements.} We would like thank Christian Haase and Monika
Ludwig for inspiring discussions and Andreas Bernig for his interest and his
help with Lemma~\ref{lem:bernig}. We also thank Benjamin Nill, Matthias
Schymura, Thorsten Theobald and the anonymous referee for helpful comments.
R.~Sanyal was supported by the DFG-Collaborative Research Center, TRR 109
``Discretization in Geometry and Dynamics''.

\section{Algebraic characterization of combinatorial mixed valuations}
\label{sec:algebra}
\newcommand\Diff{\triangle}%

The mixed volume $\MV_d(P_1,\dots,P_d)$ is the unique symmetric and
(positively) multilinear form such that $\MV_d(P,\dots,P) = \vol_d(P)$;
cf.~\cite[Thm.~5.1.7]{SchneiderNew}. In this section, we will give an
analogous algebraic characterization of combinatorial mixed valuations.  A
function $f\colon \Z_{\ge 0}^\Npolys \rightarrow G$ is a polynomial of degree
$\le d$ if $d = 0$ and $f(n_1,\dots,n_\Npolys)$ is constant or 
\[
    \Diff_i f \ := \  f(n_1,\dots,n_i + 1,\dots,n_\Npolys) -
    f(n_1,\dots,n_i,\dots,n_\Npolys) 
\]
is a polynomial of degree $< d$ for all $i = 1,2,\dots,\Npolys$. As alluded to in the introduction,
McMullen~\cite{mcmullenEuler} showed the following result that underlies most
of the theory of translation-invariant valuations.
\begin{thm}\label{thm:poly}
    Let $\phi : \PolL \rightarrow G$ be a $\LL$-valuation with values in an
    abelian group $G$. Then for any polytopes $P_1,\dots,P_\Npolys \in \PolL$,
    the function
    \[
        \phi_{P_1,\dots,P_\Npolys}(n_1,\dots,n_\Npolys) \ := \ 
        \phi(n_1P_1 + \cdots + n_\Npolys P_\Npolys)
    \]
    is a polynomial of degree $\le \dim P_1 + \cdots + P_\Npolys$.
\end{thm}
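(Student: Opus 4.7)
The plan is to reduce the statement to McMullen's one-variable theorem recalled in the introduction (for any $\LL$-polytope $Q$, $\phi(nQ)$ is a polynomial in $n$ of degree at most $\dim Q$) via two steps: first, iterate the one-variable case to obtain joint polynomiality in $n_1,\dots,n_\Npolys$; second, use a diagonal restriction $n_i \mapsto c_i n$ to upgrade the one-variable degree bound on Minkowski sums into the claimed total-degree bound on the multivariate function. For polynomiality, fix integers $m_2,\dots,m_\Npolys \in \Znn$ and set $Q := m_2 P_2 + \cdots + m_\Npolys P_\Npolys \in \PolL$. The translated valuation $\phi^{+Q}(R) := \phi(R+Q)$ is itself a $\LL$-valuation, so the one-variable case shows that $\phi^{+Q}(n_1 P_1) = \phi_{P_1,\dots,P_\Npolys}(n_1, m_2, \dots, m_\Npolys)$ is a polynomial in $n_1$ of degree at most $\dim P_1$. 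Applying this in each slot, together with a standard finite-difference / interpolation argument that uses the uniform per-variable degree bounds, shows that $f := \phi_{P_1,\dots,P_\Npolys}$ is a joint polynomial with $\deg_{n_i} f \le \dim P_i$.

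For the total-degree bound, set $d := \dim(P_1 + \cdots + P_\Npolys)$ and decompose $f = \sum_{k \ge 0} f_k$ into its homogeneous components $f_k$ of total degree $k$ (working in $G \otimes \Q$ if $G$ has torsion). For any positive integers $c_1,\dots,c_\Npolys$, specializing $n_i = c_i n$ yields
\[
    f(c_1 n, \dots, c_\Npolys n) \ = \ \phi\bigl(n(c_1 P_1 + \cdots + c_\Npolys P_\Npolys)\bigr) \ = \ \sum_k n^k \, f_k(c_1,\dots,c_\Npolys).
\]
Positive dilations do not alter the linear span parallel to an affine hull, so $\dim(c_1 P_1 + \cdots + c_\Npolys P_\Npolys) = d$. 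The one-variable McMullen theorem therefore forces the left-hand side to be a polynomial in $n$ of degree at most $d$, and comparing coefficients of $n^k$ for $k > d$ gives $f_k(c_1,\dots,c_\Npolys) = 0$ for every $(c_1,\dots,c_\Npolys) \in \Z_{>0}^\Npolys$. Since $\Z_{>0}^\Npolys$ is Zariski dense, each $f_k$ vanishes identically for $k > d$, so $f$ has total degree at most $d$ as claimed.

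The main technical obstacle is the polynomiality step: passing from polynomiality in each variable separately to joint polynomiality for values in a general abelian group $G$. The uniform per-variable bounds $\deg_{n_i} f \le \dim P_i$ make this routine via iterated Vandermonde / finite-difference arguments, and this is the only step requiring attention beyond the reduction to the one-variable case. The refinement to the sharp total-degree bound is then a clean scaling argument whose entire content is the observation $\dim(c_1 P_1 + \cdots + c_\Npolys P_\Npolys) = \dim(P_1 + \cdots + P_\Npolys)$ for positive $c_i$.
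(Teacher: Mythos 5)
The first half of your argument --- joint polynomiality together with the per-variable bounds $\deg_{n_i} f \le \dim P_i$ --- is sound and is exactly the route the paper indicates: McMullen's one-variable theorem applied to the translated valuations $\phi^{+Q}(R):=\phi(R+Q)$, followed by interpolation in the binomial basis, which uses only integer combinations of values of $\phi$ and therefore works over an arbitrary abelian group $G$ (the paper disposes of this step by citing McMullen and the reference given in the introduction).

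The genuine gap is in your second step, the sharp total-degree bound. Passing to $G\otimes\Q$ is not harmless: when $G$ has torsion the map $G\to G\otimes\Q$ kills precisely the torsion, so your scaling and Zariski-density argument only shows that the binomial-basis coefficients of $f$ of total order greater than $d=\dim(P_1+\cdots+P_\Npolys)$ die in $G\otimes\Q$, i.e.\ that they are torsion elements of $G$, not that they vanish; moreover the homogeneous decomposition $f=\sum_k f_k$ you start from does not even exist over $G$, since converting binomial coefficients into monomials requires denominators. The statement is needed in exactly this generality: the paper applies Theorem~\ref{thm:poly} (through Theorem~\ref{thm:repr}, Corollary~\ref{cor:zero} and Corollary~\ref{cor:CMcls}) to the universal valuation $P\mapsto\Cls{P}$ with values in the polytope algebra $\Pi(\LL)$, e.g.\ to conclude $(\Cls{P}-1)^{\dim P+1}=0$ and $\CM_\Npolys\phi\equiv 0$ for $\Npolys>d$, and for a lattice $\LL$ the paper explicitly notes that $\Pi(\LL)_+$ is not a $\Q$-vector space, so torsion cannot be waved away. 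The bound does hold integrally, but for reasons a diagonal-restriction argument over $\Q$ cannot see: the relevant coefficients vanish because of valuation identities. For instance, for $P_1=P_2=[0,1]\subset\R$ the coefficient of $\binom{n_1}{1}\binom{n_2}{1}$ is $\phi([0,2])-2\phi([0,1])+\phi(\{0\})$, which is zero for every $\LL$-valuation because $[0,2]=[0,1]\cup[1,2]$ with intersection a single (translated) lattice point --- no division enters. McMullen's proof, which the paper cites in lieu of giving one, obtains the degree bound by such integral valuation arguments; to close the gap you would have to prove directly that the iterated finite differences of $f$ at the origin of total order greater than $\dim(P_1+\cdots+P_\Npolys)$ vanish in $G$ itself, or else weaken the theorem to torsion-free $G$, which would not suffice for the paper's applications.
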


Appealing to the \emph{calculus of finite
differences}~\cite[Sect.~1.9]{EC1new}, we obtain a more familiar
representation of a polynomial $f(n_1,\dots,n_\Npolys)$ as
\newcommand\Ba{{\boldsymbol\alpha}}
\begin{equation}\label{eqn:poly}
    f(n_1,\dots,n_\Npolys) \ = \ \sum_{\Ba \in \Znn^\Npolys} \Diff^\Ba f(\0) \,
    \binom{n_1}{\alpha_1}\cdots \binom{n_\Npolys}{\alpha_\Npolys},
\end{equation}
where $\Diff^\Ba f := \Diff_1^{\alpha_1}\cdots \Diff_\Npolys^{\alpha_\Npolys}
f$. Here, $\Ba=(\alpha _1,\ldots,\alpha _\Npolys) \in
\Znn^\Npolys$ serves as a multi-index and we define $|\Ba|=\alpha _1+\cdots +\alpha _\Npolys$. Indeed, let $\hat{f}(n_1,\dots,n_\Npolys)$ denote the right-hand side
of~\eqref{eqn:poly}.  Then $\hat{f}$ is polynomial in $n_1,\dots,n_\Npolys$ of
total degree $\le d$ and using the fact that $\Diff_i \binom{n_j}{k} = 0$ if
$i \neq j$ and $=\binom{n_j }{k-1}$, otherwise, it is straightforward to
verify that $\Diff^\Ba \hat{f} = \Diff^\Ba f$ for all $\Ba \in \Znn^\Npolys$.

For the polynomial $\phi_{P_1,\dots,P_\Npolys}(n_1,\dots,n_\Npolys)$ we note that
\[
    \Diff_1\cdots\Diff_\Npolys \phi_{P_1,\dots,P_\Npolys}(\0) \ = \ \sum_{I
    \subseteq [\Npolys]}
    (-1)^{\Npolys - |I|} \phi(P_I) \ = \ \CM\phi(P_1,\dots,P_\Npolys).
\]
Thus, setting 
\[
    \CM\phi(P_1^{\alpha_1},\dots,P_\Npolys^{\alpha_\Npolys}) \ := \
    \CM\phi(\underbrace{P_1,\dots,P_1}_{\alpha_1},  \dots,
    \underbrace{P_\Npolys,\dots,P_\Npolys}_{\alpha_\Npolys})
\]
we conclude the following defining property of combinatorial mixed valuations.
\begin{thm}\label{thm:repr}
    Let $\phi$ be a $\LL$-valuation and $P_1,\dots,P_\Npolys \in \PolL$. Then
    \[
        \phi(n_1P_1+\cdots+n_\Npolys P_\Npolys) \ = \ 
        \sum_{\Ba \in \Z_{\ge 0}^\Npolys}
        \CM\phi(P_1^{\a_1},\dots,P_\Npolys^{\alpha_\Npolys})
        \binom{n_1}{\alpha_1}\cdots
        \binom{n_\Npolys}{\alpha_\Npolys}.
    \]
    In particular, for all $\Ba \in \Z_{\ge 0}^\Npolys$,
    $\CM\phi(P_1^{\a_1},\dots,P_\Npolys^{\alpha_\Npolys})$ is a valuation in every
    argument $P_1,\dots, P_\Npolys$.
\end{thm}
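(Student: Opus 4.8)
The plan is to obtain the displayed formula from the finite-difference expansion~\eqref{eqn:poly} of a polynomial, so that the whole statement reduces to the single identity
\[
    \Diff^\Ba \phi_{P_1,\dots,P_\Npolys}(\0) \ = \ \CM\phi(P_1^{\a_1},\dots,P_\Npolys^{\a_\Npolys})
    \qquad\text{for all } \Ba \in \Znn^\Npolys .
\]
Indeed, by Theorem~\ref{thm:poly} the function $f := \phi_{P_1,\dots,P_\Npolys}$ is a polynomial in $n_1,\dots,n_\Npolys$, so~\eqref{eqn:poly} reads $f(n_1,\dots,n_\Npolys) = \sum_{\Ba} \Diff^\Ba f(\0)\binom{n_1}{\a_1}\cdots\binom{n_\Npolys}{\a_\Npolys}$; substituting the identity above gives exactly the asserted representation.

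To prove the identity I would reduce to the case $\Ba = (1,\dots,1)$, which is the computation already performed just before the theorem. Put $N := |\Ba|$ and let $(Q_1,\dots,Q_N) := (P_1^{\a_1},\dots,P_\Npolys^{\a_\Npolys})$ be the list of $N$ polytopes obtained by listing each $P_j$ exactly $\a_j$ times. That computation, applied with $\Npolys$ replaced by $N$, gives $\CM\phi(P_1^{\a_1},\dots,P_\Npolys^{\a_\Npolys}) = \Diff_1\cdots\Diff_N\,\phi_{Q_1,\dots,Q_N}(\0)$. Since $\l P + \mu P = (\l+\mu)P$ for every polytope $P$ and all $\l,\mu \ge 0$, one has $\phi_{Q_1,\dots,Q_N}(m_1,\dots,m_N) = \phi_{P_1,\dots,P_\Npolys}(\sigma_1,\dots,\sigma_\Npolys)$, where $\sigma_j$ is the sum of the $m_i$ over the block of indices carrying copies of $P_j$; and because $\Diff_{m_i}$ acts on such a function as $\Diff_j$ in the variable $\sigma_j$ whenever $i$ lies in the $j$-th block, applying all $N$ operators and evaluating at $m = \0$ (hence $\sigma = \0$) yields $\Diff^\Ba \phi_{P_1,\dots,P_\Npolys}(\0)$. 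Alternatively, one may regroup the $2^N$ subsets $I \subseteq [N]$ in~\eqref{eqn:DM} according to the vector $\k = (k_1,\dots,k_\Npolys)$ recording how many copies of each $P_j$ the set $I$ selects, obtaining
\[
    \CM\phi(P_1^{\a_1},\dots,P_\Npolys^{\a_\Npolys}) \ = \ \sum_{0 \le k_j \le \a_j}(-1)^{|\Ba|-|\k|}\binom{\a_1}{k_1}\cdots\binom{\a_\Npolys}{k_\Npolys}\,\phi(k_1 P_1 + \cdots + k_\Npolys P_\Npolys),
\]
and recognize the right-hand side as $\Diff^\Ba f(\0)$ via the one-variable formula $\Diff^{\a}g(0) = \sum_{k=0}^{\a}(-1)^{\a-k}\binom{\a}{k}g(k)$ applied coordinate by coordinate. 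The step I expect to be the main obstacle is precisely this bookkeeping: collapsing the repeated Minkowski summands via $\l P + \mu P = (\l+\mu)P$ and tracking the signs and binomial multiplicities as the $2^N$-term alternating sum is reorganized. Everything else is the routine calculus of finite differences.

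For the final sentence I would use the regrouped expression above. Fix $P_2,\dots,P_\Npolys$ and a vector $\k$; for the fixed polytope $R := k_2 P_2 + \cdots + k_\Npolys P_\Npolys \in \PolL$ the map $P \mapsto \phi(P + R)$ is a $\LL$-valuation, and since dilation by the nonnegative integer $k_1$ commutes with the formation of unions and intersections of polytopes and maps $\LL$ into itself, the map $P \mapsto \phi(k_1 P + R)$ is again a $\LL$-valuation (for $k_1 = 0$ it is constant in $P$, which is harmless). Hence $P_1 \mapsto \CM\phi(P_1^{\a_1},\dots,P_\Npolys^{\a_\Npolys})$ is a finite $\Z$-linear combination of $\LL$-valuations, and therefore a $\LL$-valuation; by the symmetry of $\CM\phi$ the same holds in every argument.
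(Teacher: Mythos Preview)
Your proposal is correct and follows essentially the same route as the paper: the paper also derives the representation from~\eqref{eqn:poly} by observing $\Diff_1\cdots\Diff_\Npolys \phi_{P_1,\dots,P_\Npolys}(\0) = \CM\phi(P_1,\dots,P_\Npolys)$ and then passing to general $\Ba$ via the definition of $\CM\phi(P_1^{\a_1},\dots,P_\Npolys^{\a_\Npolys})$ as $\CM\phi$ applied to the list with repetitions. You have simply made explicit the bookkeeping (the identity $\l P + \mu P = (\l+\mu)P$ and the regrouping of the $2^{|\Ba|}$ terms) and the valuation-in-each-argument claim that the paper leaves to the reader.
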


Together with Theorem~\ref{thm:poly} this yields  the following.
\begin{cor}\label{cor:zero}
    Let $\phi$ be a $\LL$-valuation on $\R^d$ and let $P_1,\dots,P_\Npolys$ be
    $\LL$-polytopes. The combinatorial mixed valuation
    $\CM\phi(P_1,\dots,P_\Npolys)$ is the coefficient of
    $\binom{n_1}{1}\cdots\binom{n_\Npolys}{1}$ of the polynomial $\phi(n_1P_1 + \cdots +
    n_\Npolys P_\Npolys)$.
    Moreover, $\CM_\Npolys\phi \equiv 0$
    for $\Npolys > d$.
\end{cor}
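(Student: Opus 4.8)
The plan is to deduce both assertions directly from Theorem~\ref{thm:repr}, calling on Theorem~\ref{thm:poly} only for the vanishing part. For the first sentence I would recall that the finite-difference expansion~\eqref{eqn:poly} writes a polynomial \emph{uniquely} in the basis of products of binomial coefficients, with the coefficient of $\binom{n_1}{\alpha_1}\cdots\binom{n_\Npolys}{\alpha_\Npolys}$ equal to $\Diff^\Ba f(\0)$. Applying this to $f=\phi_{P_1,\dots,P_\Npolys}$ and to the multi-index $\Ba=\1$, the coefficient of $\binom{n_1}{1}\cdots\binom{n_\Npolys}{1}$ is $\Diff_1\cdots\Diff_\Npolys\phi_{P_1,\dots,P_\Npolys}(\0)$, which is precisely $\CM\phi(P_1,\dots,P_\Npolys)$ by the computation recorded just before Theorem~\ref{thm:repr} (equivalently, it is the $\Ba=\1$ summand of the identity in Theorem~\ref{thm:repr}). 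This settles the first claim.

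For the vanishing statement, fix $\Npolys>d$ and $\LL$-polytopes $P_1,\dots,P_\Npolys$. Since $\LL\subseteq\R^d$, the Minkowski sum $P_1+\cdots+P_\Npolys$ has dimension at most $d$, so Theorem~\ref{thm:poly} tells us that $\phi_{P_1,\dots,P_\Npolys}$ has total degree at most $d<\Npolys$. I would then invoke the elementary fact that $\Diff_1\cdots\Diff_\Npolys$ annihilates every polynomial of total degree $<\Npolys$: by the recursive definition of degree, each operator $\Diff_i$ strictly decreases the degree, so after $\Npolys$ applications only the zero polynomial remains. (Alternatively, in~\eqref{eqn:poly} distinct multi-indices $\Ba$ contribute the pairwise distinct top monomials $n_1^{\alpha_1}\cdots n_\Npolys^{\alpha_\Npolys}$ of degree $|\Ba|$, so there is no cancellation in the highest-degree part and every coefficient indexed by a $\Ba$ with $|\Ba|>d$ must vanish; this applies in particular to $\Ba=\1$.) Either way $\CM\phi(P_1,\dots,P_\Npolys)=0$, and since the $P_i$ were arbitrary we obtain $\CM_\Npolys\phi\equiv0$.

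There is no real obstacle here; the corollary is a formal consequence of the two preceding theorems. The only point deserving attention is that one must use Theorem~\ref{thm:poly} in the form that bounds the total degree by $\dim(P_1+\cdots+P_\Npolys)\le d$, rather than by $\sum_i\dim P_i$, since the latter need not be less than $\Npolys$ when $\Npolys>d$. With that bound in hand, comparing the degree $d$ of the polynomial against the degree $\Npolys$ of the monomial $\binom{n_1}{1}\cdots\binom{n_\Npolys}{1}$ completes the argument.
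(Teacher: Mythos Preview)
Your argument is correct and matches the paper's approach: the corollary is stated immediately after Theorem~\ref{thm:repr} with the remark that it follows from that theorem together with Theorem~\ref{thm:poly}, which is exactly what you do. Your observation that one needs the degree bound $\dim(P_1+\cdots+P_\Npolys)\le d$ rather than $\sum_i\dim P_i$ is a nice point that the paper leaves implicit.
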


A remarkable property is that in the binomial basis, the coefficients of a
polynomial $f(n_1,\dots,n_\Npolys)$ are unique and independent of the
coefficient group $G$. This observation together with~\eqref{eqn:poly} yields
a characterization of combinatorial mixed valuations. 
\begin{cor}\label{cor:charac}
    Let $\phi : \PolL \rightarrow G$ be a $\LL$-valuation. Let $D = (D_r :
    \PolL^r \rightarrow G)_{r \ge 0}$ be a family of maps such that $D_r$ is
    symmetric and a $\LL$-valuation in each argument and 
    \begin{enumerate}[\rm (i)]
        \item $D_0 = \phi(\{0\})$,
        \item $D_1(P) = \phi(P) - D_0$, and
        \item for all $\Npolys\geq 2$ and for any $P_1,P_2,\dots,P_{\Npolys} \in \PolL$
            \[
                D_\Npolys(P_1,P_2,\dots,P_{\Npolys}) \ = \
                D_{\Npolys-1}(P_1+P_2,P_3,\dots,P_{\Npolys}) -
                D_{\Npolys-1}(P_1,P_3,\dots,P_{\Npolys}) -
                D_{\Npolys-1}(P_2,\dots,P_{\Npolys}).
            \]
    \end{enumerate}
    Then $D = \CM\phi$.
\end{cor}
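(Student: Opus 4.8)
The plan is to reduce the statement to two claims: first, that the family $\CM\phi = (\CM_r\phi)_{r\ge 0}$ itself satisfies (i)--(iii); and second, that properties (i)--(iii) determine a family $(D_r)_{r\ge 0}$ uniquely, whence $D$ and $\CM\phi$ must coincide. Observe in advance that the symmetry and valuation-in-each-argument hypotheses on the $D_r$ play no role in the uniqueness argument; they are recorded only because, as noted right after \eqref{eqn:DM}, $\CM\phi$ does enjoy these properties and is therefore a legitimate candidate for $D$.

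For the first claim, conditions (i) and (ii) are immediate from the definition \eqref{eqn:DM}: $\CM_0\phi = \phi(\{0\})$ by convention, and $\CM_1\phi(P) = -\phi(\{0\}) + \phi(P)$. Verifying (iii) for $\CM\phi$ is the substantive point. I would fix $r \ge 2$ and $P_3,\dots,P_r$, and for a polytope $A$ set $Q_A := \CM_{r-1}\phi(A,P_3,\dots,P_r)$. Splitting the defining sum \eqref{eqn:DM} of $Q_A$ according to whether the slot of $A$ lies in the index subset gives
\[
    Q_A \ = \ \sum_{J\subseteq\{3,\dots,r\}} (-1)^{(r-1)-|J|}\bigl(\phi(P_J) - \phi(A + P_J)\bigr),
\]
where $P_J := \sum_{j\in J}P_j$. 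Forming $Q_{P_1+P_2} - Q_{P_1} - Q_{P_2}$ and simplifying, the right-hand side of (iii) becomes
\[
    \sum_{J\subseteq\{3,\dots,r\}}(-1)^{r-|J|}\bigl(\phi(P_J) + \phi(P_1+P_2+P_J) - \phi(P_1+P_J) - \phi(P_2+P_J)\bigr).
\]
On the other hand, grouping the sum defining $\CM_r\phi(P_1,\dots,P_r)$ by the value of $I\cap\{1,2\}\in\{\emptyset,\{1\},\{2\},\{1,2\}\}$ yields precisely this same four-term alternating sum. Hence $\CM\phi$ satisfies (iii).

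For the second claim I would induct on $r$: condition (i) forces $D_0 = \phi(\{0\}) = \CM_0\phi$, condition (ii) forces $D_1 = \phi(\cdot) - D_0 = \CM_1\phi$, and for $r\ge 2$ the recursion (iii) expresses $D_r$ entirely in terms of $D_{r-1}$ by exactly the same formula that $\CM_r\phi$ obeys by the first claim; so $D_r = \CM_r\phi$ for every $r$. The only step that requires any care is the bookkeeping in the verification of (iii)---making sure the two ways of collapsing the alternating sums genuinely agree term by term---but this is a routine inclusion--exclusion computation, and I anticipate no real obstacle.
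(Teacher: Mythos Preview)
Your proof is correct. The paper itself does not spell out a proof of this corollary; it only remarks that the result follows from the uniqueness of coefficients in the binomial basis together with \eqref{eqn:poly}, i.e., that $\CM_r\phi(P_1,\dots,P_r)=\Diff_1\cdots\Diff_r\,\phi_{P_1,\dots,P_r}(\0)$ and that conditions (i)--(iii) encode exactly this iterated-difference structure. Your direct verification---showing that $\CM\phi$ satisfies (i)--(iii) by splitting the alternating sum over $I\cap\{1,2\}$, and then invoking the obvious induction on $r$---makes this explicit and is entirely in the same spirit. Your observation that the symmetry and valuation hypotheses on $D_r$ are not needed for uniqueness is also correct; they are listed only because $\CM\phi$ happens to enjoy them.
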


\section{Combinatorial mixed valuations and the polytope algebra}
\label{sec:CM_PiL}

\subsection{Cones in the Polytope Algebra}
\label{ssec:cones}

We wish to cast the statement of Theorem~\ref{thm:CM_weakly} into a more
conceptual setting, namely, that of cones in the polytope
algebra~\cite{mcmullenPA}. As the
polytope algebra is not as well known as it should be, we start from scratch.
For a fixed $\LL$, let $\Z\PolL$ be the free abelian group with generators
$e_P$ for $P \in \PolL$. Consider the subgroup $U$ that is generated by
elements of the form
\begin{equation}\label{eqn:PiL}
\begin{aligned}
    e_{P \cup Q} \ + \ e_{P\cap Q} \ - \ e_P \ - \ e_Q \quad & \quad \text{ for
$P,Q \in \PolL$
    with $P\cup Q, P \cap Q \in \PolL$, and } \\
    e_{P + t} \ - \ e_P \quad & \quad \text{ for $P\in \PolL$ and $t \in
    \LL$.}
\end{aligned}
\end{equation}
\newcommand\PiL{\Pi(\LL)}%
The \Defn{polytope algebra} is the quotient $\PiL := \Z\PolL / U$. We write
$\Cls{P}$ for $e_P + U \in \PiL$.  The map $\PolL \rightarrow \PiL$ given by
$P \mapsto \Cls{P}$ is the \emph{universal} $\LL$-valuation in the following
sense. 

\newcommand\extPhi{\overline{\phi}}
\begin{prop}\label{prop:universal}
    Let $G$ be an abelian group.  For every $\LL$-valuation $\phi : \PolL
    \rightarrow G$ there is a unique homomorphism of abelian groups $\extPhi :
    \PiL \rightarrow G$ such that the following diagram commutes:
    \[  
        \xymatrix{ 
                \PolL \ar@{->}[d] \ar@{->}[r]^\phi & G 
                \\ \PiL \ar@{-->}[ru]_{\extPhi}
        }.
    \]
    In particular, $\hom(\PiL,G)$ is the group of $\LL$-valuations on $\PolL$
    with values in $G$.
\end{prop}

A valuation $\phi$ has the \Defn{inclusion-exclusion property} if for any
collection of $\LL$-polytopes $P, P_1,\dots,P_\Npolys$ such that $P = P_1 \cup
\cdots \cup P_\Npolys$ and $\bigcap_{i \in I} P_i \in \PolL$ for
every $I \subseteq [\Npolys]$,
the following holds:
\begin{equation}\label{eqn:IE}
    \phi(P) \ = \ \sum_{I \subseteq [\Npolys]} (-1)^{\Npolys-|I|}
    \phi(\bigcap_{i \in I}
    P_i).
\end{equation}

A priori, it is \emph{not} clear that every $\LL$-valuation has the
inclusion-exclusion property. Volland~\cite{Volland} showed that the
inclusion-exclusion property holds whenever $\LL$ is a vector space over a
subfield of $\R$. Betke (unpublished) and in a stronger form
McMullen~\cite{McMullenIE} verified this when $\LL$ is a lattice\footnote{More
precisely, Volland and Betke-McMullen showed the inclusion-exclusion property
without the assumption of translation-invariance.}. We record their results in
abstract form.
\begin{thm}\label{thm:IE}
    The universal valuation $\PolL \rightarrow \PiL$ (and hence every
    valuation) has the inclusion-exclusion property for any $\LL$.
\end{thm}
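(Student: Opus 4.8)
The plan is to deduce the statement from the classical inclusion--exclusion theorems of Volland~\cite{Volland} and Betke--McMullen~\cite{McMullenIE}, using nothing beyond the universal property of $\PiL$ established in Proposition~\ref{prop:universal}.

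First I would dispense with the parenthetical ``and hence every valuation'': this clause is a formality. By Proposition~\ref{prop:universal}, every $\LL$-valuation $\phi:\PolL\to G$ factors as $\phi=\extPhi\circ u$, where $u:\PolL\to\PiL$, $P\mapsto\Cls P$, is the universal valuation and $\extPhi:\PiL\to G$ is a homomorphism of abelian groups. The inclusion--exclusion identity~\eqref{eqn:IE} is a $\Z$-linear relation among the values of a valuation on the finitely many polytopes $P$ and $\bigcap_{i\in I}P_i$, $I\subseteq[\Npolys]$. Hence, if it holds for $u$, then applying the group homomorphism $\extPhi$ to it termwise yields the corresponding identity for $\phi$. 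It therefore suffices to prove that the universal valuation $u$ itself has the inclusion--exclusion property.

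Next I would observe that $u$ is, in particular, a valuation on $\PolL$ with values in the abelian group $\PiL$: it satisfies~\eqref{eqn:val} by the very definition of the first family of generators of the subgroup $U$, and its additional translation-invariance (the second family) plays no role below. Now I invoke the cited results, which apply to valuations with values in an arbitrary abelian group and do not require translation-invariance. When $\LL$ is a vector space over a subfield of $\R$, Volland~\cite{Volland} shows that every such valuation on $\PolL$ has the inclusion--exclusion property; when $\LL$ is a lattice, the same conclusion holds by Betke (unpublished) and, in sharper form, McMullen~\cite{McMullenIE}. Applied to $u$, this gives exactly~\eqref{eqn:IE} for every configuration $P=P_1\cup\cdots\cup P_\Npolys$ with $P,\bigcap_{i\in I}P_i\in\PolL$, which is the assertion.

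The only point that is not pure formality is matching hypotheses: the classical theorems are often phrased for valuations defined on the full family of convex polytopes (resp.\ lattice polytopes) of the ambient space, whereas here the domain is the possibly smaller family $\PolL$. This is harmless, because the inclusion--exclusion property as formulated in~\eqref{eqn:IE} only constrains configurations all of whose members---the union $P$ and every intersection $\bigcap_{i\in I}P_i$---already lie in $\PolL$; restricting the classical identity to such configurations is immediate and never references a polytope outside $\PolL$. (Alternatively, one may extend $u$ to a valuation on all polytopes of the ambient space with values in a suitable abelian group, apply the classical result verbatim, and then restrict.) I expect this bookkeeping to be the main---indeed essentially the only---obstacle, since no new geometric input is needed beyond the results of Volland and Betke--McMullen.
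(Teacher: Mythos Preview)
Your proposal is correct and matches the paper's approach exactly: the paper does not give an independent proof of this theorem but simply records the classical results of Volland and Betke--McMullen in the abstract form stated, so your reduction to those results via the universal property (Proposition~\ref{prop:universal}) is precisely what is intended.
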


It can be verified that $\PiL$ is a commutative ring with unit $1 =
\Cls{0}$ with respect to multiplication given by Minkowski addition:
$\Cls{P}\cdot\Cls{Q} = \Cls{P+Q}$.  For $\LL = \R$ (or more generally a vector
space over an ordered  field), the polytope algebra was thoroughly
investigated by McMullen~\cite{mcmullenPA} (building on work of Jessen and
Thorup~\cite{JT}) and by Morelli~\cite{morelli}. Universality together with 
Theorem~\ref{thm:repr} implies $(\Cls{P}-1)^{\dim P + 1} = 0$ for all
$\LL$-polytopes $P \subseteq \R^d$.

\begin{cor}\label{cor:CMcls}
    For polytopes $P_1,\dots,P_\Npolys \in \PolL$
    \begin{equation}\label{eqn:polynom_Pi}
    \Cls{n_1P_1 + \cdots + n_\Npolys P_\Npolys} \ = \ \sum_{\a} \CM\Cls{
    P_1^{\a_1},\dots,
    P_\Npolys^{\a_\Npolys}} 
    \binom{n_1}{\a_1}
    \cdots
    \binom{n_\Npolys}{\a_\Npolys},
\end{equation}
where 
\begin{equation}\label{eqn:CMcls}
    \CM\Cls{ P_1,\dots, P_r} \ := \ \sum_{I \subseteq [r]} (-1)^{r - |I|}
    \Cls{P_I} \ = \ \prod_{i=1}^r (\Cls{P_i} - 1)
\end{equation}
is the discrete mixed valuation associated to the universal valuation $\PolL
\rightarrow \PiL$. In particular, if $\phi : \PolL \rightarrow G$ is a
$\LL$-valuation, then $\CM\phi(P_1,\dots,P_\Npolys) =
\extPhi(\CM\Cls{P_1,\dots,P_\Npolys})$ in the sense of
Corollary~\ref{cor:charac}.
\end{cor}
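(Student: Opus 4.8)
The plan is to observe that nearly everything needed is already in hand: the corollary is just Theorem~\ref{thm:repr} applied to the universal valuation, combined with one elementary computation inside the ring $\PiL$. First I would note that $P \mapsto \Cls{P}$ is a $\LL$-valuation with values in the abelian group $\PiL$ — this is immediate from the definition of the subgroup $U$ in~\eqref{eqn:PiL}, since the first family of generators encodes~\eqref{eqn:val} and the second encodes translation-invariance. Applying Theorem~\ref{thm:repr} to this particular valuation (whose value group happens to be $\PiL$ itself) yields exactly the expansion~\eqref{eqn:polynom_Pi}, in which $\CM\Cls{P_1,\dots,P_r}$ denotes, by the defining formula~\eqref{eqn:DM}, the alternating sum $\sum_{I \subseteq [r]}(-1)^{r-|I|}\Cls{P_I}$. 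The sum is finite by Theorem~\ref{thm:poly}.

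The only genuinely new content is the closed form. I would expand the product over the commutative ring $\PiL$:
\[
    \prod_{i=1}^{\Npolys}(\Cls{P_i}-1) \ = \ \sum_{I \subseteq [\Npolys]}
    (-1)^{\Npolys-|I|}\prod_{i\in I}\Cls{P_i},
\]
and then use that multiplication in $\PiL$ is induced by Minkowski addition, so that $\prod_{i\in I}\Cls{P_i} = \Cls{\sum_{i\in I}P_i} = \Cls{P_I}$, with the empty product equal to $1 = \Cls{0} = \Cls{P_\emptyset}$. This establishes~\eqref{eqn:CMcls}.

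For the transfer statement, I would apply the homomorphism $\extPhi : \PiL \to G$ from Proposition~\ref{prop:universal}. Since $\extPhi$ is additive and satisfies $\extPhi(\Cls{P}) = \phi(P)$, applying it termwise to~\eqref{eqn:CMcls} gives
\[
    \extPhi\bigl(\CM\Cls{P_1,\dots,P_\Npolys}\bigr) \ = \ \sum_{I\subseteq[\Npolys]}
    (-1)^{\Npolys-|I|}\phi(P_I) \ = \ \CM\phi(P_1,\dots,P_\Npolys),
\]
by~\eqref{eqn:DM}; alternatively, applying $\extPhi$ to~\eqref{eqn:polynom_Pi} reproduces the polynomial identity of Theorem~\ref{thm:repr} for $\phi$, and uniqueness of the coefficients in the binomial basis forces the same conclusion.

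There is no real obstacle here: the statement is essentially bookkeeping once Theorem~\ref{thm:repr} is available. The only points requiring a word of care are that the ring structure on $\PiL$ is well-defined and compatible with the universal valuation — already recorded in the discussion preceding the corollary — and that Theorem~\ref{thm:repr} is being invoked legitimately for a valuation whose value group is $\PiL$, which it is, since that theorem is stated for arbitrary abelian coefficient groups.
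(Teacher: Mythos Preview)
Your proposal is correct and matches the paper's approach: the corollary is stated without proof precisely because it is the instance of Theorem~\ref{thm:repr} for the universal valuation $P\mapsto\Cls{P}$, together with the ring identity $\prod_i(\Cls{P_i}-1)=\sum_I(-1)^{r-|I|}\Cls{P_I}$ and the naturality of $\extPhi$ from Proposition~\ref{prop:universal}. You have spelled out exactly these steps.
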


Properties of $\LL$-valuations considered in Section~\ref{sec:intro} can
be phrased in the language of cones in $\PiL$. To begin with, we define the
\Defn{monotone cone} as the set
\newcommand\M{\mathcal{M}}%
\begin{equation}\label{eqn:monotone_cone}
    \M(\LL) \ := \ \Z_{\ge 0}\{ \Cls{Q} - \Cls{P} : P \subseteq Q \text{
    $\LL$-polytopes} \} \ \subseteq \ \PiL.
\end{equation}
We will simply write $\M$ if $\LL$ and hence $\Pi = \PiL$ is clear from the
context.  Now, for a partially ordered group $(G,\preceq)$ and a subset $C
\subseteq \Pi$, we define
\[
    \hom_+(C,G) \ := \ \{ \phi \in \hom(\Pi,G) : \phi(x) \succeq 0 \text{ for
    all } x \in C \}.
\]
In this language, we note that
\begin{prop}\label{prop:monotone_cone}
    Let $G$ be a partially ordered abelian group. A $\LL$-valuation $\phi :
    \PolL \rightarrow G$ is monotone if and only if $\phi \in \hom_+(\M,G)$.
\end{prop}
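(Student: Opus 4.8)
The plan is to unwind both conditions through the universal property of Proposition~\ref{prop:universal}, which identifies the $\LL$-valuation $\phi$ with the unique group homomorphism $\extPhi : \Pi \to G$ satisfying $\extPhi(\Cls{P}) = \phi(P)$ for all $\LL$-polytopes $P$. Under this identification, for any pair $P \subseteq Q$ of $\LL$-polytopes one has $\extPhi(\Cls{Q} - \Cls{P}) = \phi(Q) - \phi(P)$, so monotonicity of $\phi$ (namely $\phi(P) \preceq \phi(Q)$ whenever $P \subseteq Q$) is exactly the assertion that $\extPhi$ sends every generator $\Cls{Q} - \Cls{P}$ of the cone $\M$ to an element $\succeq 0$ in $G$.

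For the forward direction I would assume $\phi$ monotone and take an arbitrary $x \in \M$, writing it as a finite sum $x = \sum_j n_j(\Cls{Q_j} - \Cls{P_j})$ with $n_j \in \Z_{\ge 0}$ and $P_j \subseteq Q_j$. Since $\extPhi$ is a homomorphism, $\extPhi(x) = \sum_j n_j(\phi(Q_j) - \phi(P_j))$, and since the positive cone $\{g \in G : g \succeq 0\}$ of the partially ordered group $G$ is closed under addition and under multiplication by nonnegative integers, each summand and hence $\extPhi(x)$ is $\succeq 0$. Thus $\phi \in \hom_+(\M,G)$. For the converse, if $\phi \in \hom_+(\M,G)$ then for $\LL$-polytopes $P \subseteq Q$ the element $\Cls{Q} - \Cls{P}$ lies in $\M$ by definition, so $\phi(Q) - \phi(P) = \extPhi(\Cls{Q} - \Cls{P}) \succeq 0$, i.e.\ $\phi$ is monotone.

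I do not anticipate a real obstacle here: the statement is a direct translation of definitions once Proposition~\ref{prop:universal} is available. The only point worth making explicit is the elementary observation that a homomorphism into a partially ordered group carries a cone generated by elements of the positive cone into the positive cone, which is precisely what licenses the passage from the generators of $\M$ to arbitrary elements of $\M$.
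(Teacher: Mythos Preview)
Your proposal is correct and matches the paper's treatment: the proposition is stated there without proof, as an immediate reformulation of the definitions once the universal property of Proposition~\ref{prop:universal} is in hand. Your argument spells out exactly this unwinding.
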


This allows us to phrase CM-monotonicity as a sort of mixed or higher
monotonicity property: For $\Npolys \ge 0$
\[
    \M_\Npolys(\LL) \ := \ \Z_{\ge 0}\left\{ \prod_{i=1}^\Npolys (\Cls{Q_i} -
    1) - \prod_{i=1}^\Npolys
    (\Cls{P_i} - 1) : \text{ for $\LL$-polytopes } P_i \subseteq Q_i \text{
    for } i=1,\dots,\Npolys \right\} .
\]
Observe that $\M_\Npolys = 0$ whenever $\Npolys = 0$ or $\Npolys \ge d+1$ and
$\M  = \M_1$.  We define the \Defn{mixed monotone cone} as
\newcommand\MM{\overline{\M}}
\[
    \MM \ := \ \M_1 + \M_2 + \cdots + \M_d
\]
and conclude that $\hom_+(\MM,G)$ are precisely the CM-monotone $\LL$-valuations
taking values in $G$.

\newcommand\CP{\mathcal{C}}%
In particular, the results from~\cite{JS15} can be expressed in this language.
For a polytope $P \in \PolL$ we define
\[
    \Cls{\relint P} \ := \ \sum_{F} (-1)^{\dim P - \dim F} \Cls{F},
\]
where the sum is over all faces of $P$. This ensures that $\Cls{P} = \sum_F
\Cls{\relint F}$.  The \Defn{combina\-torially-positive cone} $\CP$ is the
semigroup spanned by the classes $\Cls{\relint{S}}$ where $S$ ranges over all
$\LL$-simplices and  the \Defn{weak $h^\ast$-monotone cone} is defined as
\newcommand\WCM{\mathcal{W}}%
\[
    \WCM \ := \ \Z_{\ge 0}\left\{ \Cls{\relint{S}} + \Cls{\relint{F}} : S \in
    \PolL \text{ simplex} , F \subset S \text{ facet} \right\}.
\]
The statements of Theorem~3.6 and Theorem~5.1 in~\cite{JS15} can now be
phrased in terms of cones in $\PiL$.
\begin{thm}
    Let $P$ be a $\LL$-polytope of dimension $r$. Then
    \[
        \Cls{n P} \ = \ \sum_{i=0}^r c_i\binom{n+r-i}{r}
    \]
    where $c_0,\dots,c_r \in \CP$. Let $P \subseteq Q$ be two $\LL$-polytopes
    of dimension $r$. Then
    \[
        \Cls{n Q} - \Cls{n P} \ = \ \sum_{i=0}^r w_i\binom{n+r-i}{r}
    \]
    where $w_0,\dots,w_r \in \WCM$.
\end{thm}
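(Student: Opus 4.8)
The two assertions are the $\PiL$-valued reformulations of Theorems~3.6 and~5.1 of~\cite{JS15}, and the plan is to obtain them from there by universality, the substance being the combinatorial construction in~\cite{JS15}. By Proposition~\ref{prop:universal} the assignment $P\mapsto\Cls{P}$ is itself an $\LL$-valuation with values in $\PiL$, so Theorem~\ref{thm:poly} shows that $n\mapsto\Cls{nP}=\Cls{P}^{\,n}$ is a polynomial of degree $\le r$. The family $\bigl\{\binom{n+r-i}{r}\bigr\}_{i=0}^{r}$ is, like $\bigl\{\binom{n}{j}\bigr\}_{j=0}^{r}$, a $\Z$-basis of the integer-valued polynomials of degree $\le r$ (this is the elementary change of basis underlying $h^{\ast}$-polynomials), so there are unique $c_{0},\dots,c_{r}\in\PiL$ with $\Cls{nP}=\sum_{i=0}^{r}c_{i}\binom{n+r-i}{r}$; evaluating at $n=0$ already gives $c_{0}=\Cls{\{0\}}\in\CP$. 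Applying the homomorphism $\extPhi$ from Proposition~\ref{prop:universal} to the identities to be proved recovers the statements of~\cite{JS15} for an arbitrary $\LL$-valuation $\phi$, so it suffices to argue in $\PiL$.

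To show $c_{i}\in\CP$ I would run the argument behind~\cite[Thm.~3.6]{JS15}. First reduce to a simplex: fix a triangulation $\Triang$ of $P$ using only the vertices of $P$; then $\{n\sigma:\sigma\in\Triang\}$ triangulates $nP$ with all intersections faces, so the inclusion--exclusion property (Theorem~\ref{thm:IE}) gives $\Cls{nP}=\sum_{\sigma\in\Triang}\Cls{\relint(n\sigma)}$. One cannot expand the summands individually and remain in $\CP$, so the grouping must follow a shelling $\sigma_{1},\dots,\sigma_{m}$ of the maximal faces of $\Triang$: this writes $P$ as a disjoint union of half-open simplices --- each $\sigma_{j}$ with its earlier-attaching facets removed --- and hence $\Cls{nP}$ as a sum of the corresponding half-open classes; collecting those cells by the size of their restriction set makes the coefficient of each $\binom{n+r-i}{r}$ a nonnegative integer combination of classes $\Cls{\relint T}$ of (sub)simplices $T$, i.e.\ an element of $\CP$. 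For a single simplex $S$ the identity $\Cls{nS}=\Cls{S}^{\,n}=\sum_{j}\binom{n}{j}(\Cls{S}-1)^{j}$ together with $(\Cls{S}-1)^{r+1}=0$ makes the bookkeeping completely explicit.

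For the relative statement I would run the same scheme on $Q$, choosing a triangulation of $Q$ that restricts to one of $P$ (for instance a pulling triangulation with vertex set $\mathrm{vert}(P)\cup\mathrm{vert}(Q)$) and a shelling in which the cells contained in $P$ come first. Then $\Cls{nQ}-\Cls{nP}$ is the sum of $\Cls{\relint(n\sigma)}$ over the cells $\sigma$ of the $Q$-triangulation not contained in $P$; each such cell attaches along a nonempty set of facets when it is shelled on, and pairing it with one such facet $F\subset\sigma$ and grouping as before turns the coefficient of $\binom{n+r-i}{r}$ into a nonnegative combination of the generators $\Cls{\relint S}+\Cls{\relint F}$ of $\WCM$. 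This is precisely~\cite[Thm.~5.1]{JS15}.

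The routine parts are the change-of-basis remark and the inclusion--exclusion reduction. The main obstacle is the combinatorial core: producing, inside $\PolL$ for every admissible $\LL$ (a lattice, or a subfield of $\R$), a shellable triangulation whose half-open decomposition realizes the $h^{\ast}$-coefficients as sums of relative-interior classes, and, in the relative case, organizing the extra cells of the $Q$-triangulation into the $\WCM$-generators $\Cls{\relint S}+\Cls{\relint F}$ with nonnegative multiplicities. That bookkeeping is the technical content of~\cite{JS15}, which I would cite rather than redo.
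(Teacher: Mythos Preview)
Your proposal is correct and matches the paper's approach: the paper does not prove this theorem independently but states it as the $\PiL$-reformulation of Theorems~3.6 and~5.1 of~\cite{JS15}, exactly as you do. Your additional sketch of the half-open/shelling argument is in the spirit of those proofs; the only point worth tightening is that the direction of universality you need is not ``$\PiL$ implies~\cite{JS15}'' but rather that the constructions in~\cite{JS15} already express each $h^\phi_i$ as a fixed nonnegative integer combination of values $\phi(\relint S)$ (resp.\ $\phi(\relint S)+\phi(\relint F)$) independent of $\phi$, so specializing to the universal valuation $P\mapsto\Cls{P}$ gives $c_i\in\CP$ and $w_i\in\WCM$.
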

In light of the fact that the $h^\phi$-vector of a $\LL$-polytope $P$ is
defined through
\[
    \phi(nP) \ = \ \sum_{i=0}^r h^\phi_i(P) \binom{n+r-i}{r}
\]
it can be easily seen that $\phi$ is weakly $h^*$-monotone if and only if
$\phi \in \hom_+(\WCM,G)$ and $h^*$-nonnegative if and only if $\phi \in
\hom_+(\CP,G)$.

\subsection{Half-open simplices and cylinders}\label{ssec:hopen_zyl}
To a nonempty polytope $P \subset \R^d$ and a point $q \in \aff(P)$ we can
associate a \Defn{half-open} polytope $\half{q}{P}$ as the set of points $p
\in P$ such that $[q,p) \cap P \neq \emptyset$. That is, $\half{q}{P}$ is obtained by removing all facets of $P$ that are visible from $q$. Note that $\half{q}{P}$ is
never an open set and $\half{q}{P}$ is closed if and only if $q \in P$. We
call $\half{q}{P}$ \Defn{properly} half-open if $q \not\in P$. We will simply
write $\HO{P}$ for $\half{q}{P}$ if we do not want to specify $q$.

In~\cite{JS15} we extensively used half-open polytopes together with
dissections to reduce our results to questions about half-open simplices by
making use of the following fact.
\begin{lem}[{\cite[Thm.~3]{KV}}]\label{lem:ho}
    Let $P$ be a polytope, $P = P_1 \cup \cdots \cup P_\Npolys$ a dissection
    and $q \in \aff(P)$ a general point. Then
    \[
        \half{q}{P} \ = \ 
        \half{q}{P_1} \uplus 
        \cdots \uplus 
        \half{q}{P_\Npolys}
    \]
    is a disjoint union of half-open polytopes.  If $\half{q}{P}$ is properly
    half-open, then all $\half{q}{P_i}$ are properly half-open.
\end{lem}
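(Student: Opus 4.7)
My plan is to prove the claimed equality $\half{q}{P} = \half{q}{P_1} \uplus \cdots \uplus \half{q}{P_\Npolys}$ pointwise, by reducing membership in $\half{q}{R}$ for an arbitrary polytope $R$ to a statement about which tangent cone a generic direction lies in. The essential observation is: for any $\LL$-polytope $R$ and any $p \in R$,
\[
    p \in \half{q}{R} \text{ if and only if } q - p \text{ lies in the interior of the tangent cone } T_p(R).
\]
This follows immediately from convexity, since the set $\{t \in [0,1] : (1-t)q + tp \in R\}$ is a closed subinterval of $[0,1]$ containing $1$, and the condition $[q,p) \cap R \ne \emptyset$ is precisely the statement that this interval is not the singleton $\{1\}$.

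With this reformulation in hand, I would fix $p \in P$ and consider the tangent cones $T_p(P_i)$ for those indices $i$ with $p \in P_i$. Because $\{P_i\}$ dissect $P$, these cones subdivide $T_p(P)$ into pieces with pairwise disjoint interiors whose union is $T_p(P)$. The general-position assumption on $q$ amounts, concretely, to saying that $q$ avoids a finite family of affine hyperplanes depending only on the combinatorics of the dissection; the upshot is that for every $p \in P$ the direction $q - p$ lies on the relative boundary of no $T_p(P_i)$. Therefore $q-p$ lies in the interior of exactly one $T_p(P_i)$ when $q - p \in T_p(P)$, and in the interior of none otherwise. Via the reformulation, $p \in \half{q}{P}$ precisely when the former occurs, in which case $p$ belongs to exactly one $\half{q}{P_i}$; otherwise $p$ lies in neither $\half{q}{P}$ nor in any $\half{q}{P_i}$. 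This yields both the disjointness and the set-theoretic equality.

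The second part of the lemma is immediate, since $P_i \subseteq P$ implies $q \notin P_i$ whenever $q \notin P$, so each $\half{q}{P_i}$ is properly half-open whenever $\half{q}{P}$ is. I expect the main technical obstacle to be making the `general position' condition explicit: one must specify the finite family of hyperplanes in $\aff(P)$ that $q$ should avoid (essentially those that cause some $T_p(P_i)$ to fail to be full-dimensional, as $p$ ranges over the finitely many faces of the dissection) and verify that the complement is open and dense. This is a routine if slightly tedious exercise in polyhedral geometry, and once carried out the direction-based argument runs without further friction.
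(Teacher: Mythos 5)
The paper offers no proof of this lemma: it is imported wholesale from K\"oppe--Verdoolaege (\cite[Thm.~3]{KV}), so there is no internal argument to compare yours against. Your tangent-cone proof is correct and is essentially the standard one, but two imprecisions deserve repair. First, the segment criterion you derive actually says that $p \in \half{q}{R}$ if and only if $q-p$ lies in the \emph{closed} cone of feasible directions of $R$ at $p$, not in its interior; the ``interior'' version fails for special $q$ (take $q$ in the affine hull of a facet through $p$), and you only earn it after invoking genericity, so the equivalence should be stated for the closed cone and the passage to interiors deferred to the generic-position step. Second, the finite hyperplane family that $q$ must avoid is not ``those causing some $T_p(P_i)$ to fail to be full-dimensional'' --- these cones are always full-dimensional in $\aff(P)$ since the pieces of a dissection are --- but simply the affine hulls of the facets of $P_1,\dots,P_\Npolys$ inside $\aff(P)$: for $p \in P_i$ one has $q-p \in \partial T_p(P_i)$ precisely when $q$ lies on a facet-defining hyperplane of $P_i$ active at $p$, so avoiding this finite family handles all (uncountably many) $p$ at once. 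With these corrections, the local identity $T_p(P) = \bigcup_{i\colon p \in P_i} T_p(P_i)$ as a union of cones with pairwise disjoint interiors yields both the covering and the disjointness, and the claim about properly half-open pieces is, as you note, immediate from $P_i \subseteq P$.
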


A drawback of this notion of half-open polytopes is that in general
$t + \half{q}{P} \neq \half{q}{(t+P)}$ or, equivalently, $\half{q}{P} \neq
\half{q-t}{P}$. The following proposition tries to remedy this. We call a
vector $u \in \R^d\setminus \{0\}$ \Defn{general} with respect to a polytope
$P$ if the line $\R u$ is parallel to $\aff(P)$ but not parallel to any
facet-defining hyperplane.

\begin{prop}\label{prop:ihalf}
    Let $P \subset \R^d$ be a nonempty polytope and $u$ general with respect
    to $P$. For any two points $q_1,q_2 \in \aff(P)$ we have
    \[
        \half{q_1 + \mu_1 u}{P} \ = \
        \half{q_2 + \mu_2 u}{P}.
    \]
    for all sufficiently large $\mu_1,\mu_2 \gg 0$.
\end{prop}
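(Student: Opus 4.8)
The plan is to show that for a fixed general direction $u$, the half-open polytope $\half{q+\mu u}{P}$ stabilizes to a well-defined set $\Ihalf{u}{P}$ as $\mu \to \infty$, and that this limit does not depend on the choice of basepoint $q \in \aff(P)$. Recall that $\half{q'}{P}$ is obtained from $P$ by removing exactly those facets of $P$ that are \emph{visible} from $q'$, where a facet $G$ with outer normal $n_G$ (within $\aff(P)$) and facet-defining inequality $\inner{n_G, x} \le b_G$ is visible from $q'$ precisely when $\inner{n_G, q'} > b_G$. So the entire combinatorial content of $\half{q'}{P}$ is encoded by the sign vector $\bigl(\operatorname{sign}(\inner{n_G, q'} - b_G)\bigr)_G$ indexed by facets $G$ of $P$; two basepoints giving the same sign vector yield the same half-open polytope.

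First I would fix a facet $G$ of $P$ and consider the affine function $\mu \mapsto \inner{n_G, q + \mu u} - b_G = (\inner{n_G, q} - b_G) + \mu \inner{n_G, u}$. Since $u$ is general with respect to $P$, the line $\R u$ is parallel to $\aff(P)$ but not parallel to the hyperplane $\aff(G)$, which forces $\inner{n_G, u} \neq 0$. Hence this affine function in $\mu$ has nonzero slope, so its sign is eventually constant: it equals $\operatorname{sign}(\inner{n_G, u})$ for all $\mu$ larger than some threshold $\mu_0(G,q)$. In particular the sign is \emph{not} eventually zero, so $G$ is eventually either visible from $q+\mu u$ or eventually not, according to the sign of $\inner{n_G,u}$ alone — crucially independent of $q$. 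Taking $\mu_1,\mu_2$ larger than the finite maximum of $\mu_0(G,q_1)$ and $\mu_0(G,q_2)$ over the finitely many facets $G$, both $q_1+\mu_1 u$ and $q_2 + \mu_2 u$ produce the same visibility sign vector, namely the one recording $\operatorname{sign}(\inner{n_G,u})$ for each $G$, and therefore $\half{q_1+\mu_1 u}{P} = \half{q_2 + \mu_2 u}{P}$.

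The only mild subtlety is checking that ``visible from $q'$'' is genuinely governed by the sign of $\inner{n_G,q'}-b_G$ in the sense needed here — i.e.\ that a facet lying in the boundary of the half-open set (sign zero) behaves differently from one that is removed or retained — but since the general-position hypothesis on $u$ rules out the sign ever being eventually zero, this degenerate case simply does not occur for large $\mu$, and I expect this to be the main (still minor) point to spell out carefully. Everything else is the elementary observation that finitely many affine functions of $\mu$ with nonzero slope have eventually constant, basepoint-independent signs.
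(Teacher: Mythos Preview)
Your proposal is correct and follows essentially the same argument as the paper: write $P$ by its facet inequalities $\inner{a_i,x}\le b_i$, note that $\half{q}{P}$ removes exactly those facets with $\inner{a_i,q}>b_i$, and observe that for $\mu\gg 0$ the sign of $\inner{a_i,q+\mu u}-b_i$ equals $\operatorname{sign}\inner{a_i,u}\neq 0$, independently of $q$. Your discussion of the sign-zero subtlety is a nice addition but, as you already note, is moot once genericity of $u$ is used.
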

We denote this half-open polytope by $\Ihalf{u}{P}$. The half-open polytope is
obtained by removing all facets of $P$ that are visible `from infinity' in
direction $u$.

\begin{proof}
    Let $P = \{ x \in \R^d : \inner{a_i,x} \le b_i \text{ for } i =
    1,\dots,m\}$  such that $F_i = \{ x \in P : \inner{a_i,x} = b_i\}$ is a
    facet for all $i=1,\dots,m$. For a point $q$ we define $I(q) := \{ i :
    \inner{a_i,q} > b_i \}$. Then 
    \[
        \half{q}{P} = P \setminus \bigcup_{i \in I(q)} F_i.
    \]
    Since $\inner{a_i, u} \neq 0$ for all $i$, it follows that for $\mu \gg 0$
    sufficiently large $I(q+\mu u) = \{ i : \inner{a_i,u} > 0\}$ and the
    result follows.
\end{proof}

Note that it is generally not true that for any $q$ there is a $u$ such that
$\half{q}{P} = \Ihalf{u}{P}$. However, this holds if $P$ is a simplex. This
follows directly from the proof of Proposition~\ref{prop:ihalf} and the fact
that a simplex has affinely independent facet normals.
\begin{prop}\label{prop:simplex_ihalf}
    Let $S \subset \R^d$ be a $d$-simplex. Then for any $q \not \in S$ there
    is a $u \neq 0$ such that $\half{q}{S} = \Ihalf{u}{S}$.
\end{prop}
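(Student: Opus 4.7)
The approach is to make the description of $\half{q}{S}$ from the proof of Proposition~\ref{prop:ihalf} explicit and then solve for $u$ using the special linear algebra of the facet normals of a $d$-simplex. Write $S = \{x \in \R^d : \inner{a_i,x} \le b_i,\ i = 1, \ldots, d+1\}$ with facet $F_i = \{x \in S : \inner{a_i,x} = b_i\}$. As in the proof of Proposition~\ref{prop:ihalf}, $\half{q}{S} = S \setminus \bigcup_{i \in I(q)} F_i$ with $I(q) = \{i : \inner{a_i,q} > b_i\}$, and $\Ihalf{u}{S} = S \setminus \bigcup_{i : \inner{a_i,u} > 0} F_i$ whenever $u$ is general with respect to $S$. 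Thus the task reduces to finding $u \neq 0$ with $\inner{a_i,u} \neq 0$ for all $i$ and $\{i : \inner{a_i,u} > 0\} = I(q)$.

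Set $J := I(q)$. First I would verify that $J$ is a proper, nonempty subset of $[d+1]$. Nonemptyness is immediate from $q \notin S$. For the converse, the boundedness of $S$ yields a linear dependence $\sum_i c_i a_i = 0$ with all $c_i > 0$, and pairing with any $x \in S$ gives $\sum_i c_i b_i \ge 0$. If $J = [d+1]$ held, then $0 = \sum_i c_i \inner{a_i,q} > \sum_i c_i b_i \ge 0$, a contradiction.

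The remaining step is to realize the sign pattern $J$ by $T(u)$, where $T \colon \R^d \to \R^{d+1}$, $T(u) = (\inner{a_i,u})_{i=1}^{d+1}$. Since any $d$ of the normals $a_i$ are linearly independent, $T$ is injective, and its image equals the hyperplane $H = \{y \in \R^{d+1} : \sum_i c_i y_i = 0\}$. Because all $c_i > 0$ and $J$ is proper and nonempty, $H$ meets the orthant $\{y : y_i > 0 \text{ for } i \in J,\ y_i < 0 \text{ otherwise}\}$, for instance at
\[
y_i \ = \ \Big(\sum_{k \in J} c_k\Big)^{-1} \text{ for } i \in J, \qquad y_i \ = \ -\Big(\sum_{k \notin J} c_k\Big)^{-1} \text{ for } i \notin J.
\]
Then $u := T^{-1}(y)$ has $\inner{a_i,u} = y_i \neq 0$ for all $i$ with the correct signs, so $u$ is nonzero and general with respect to $S$, and $\half{q}{S} = \Ihalf{u}{S}$.

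\textbf{Main obstacle.} No single step is technically difficult; the conceptual content is precisely the hint given by the authors. The unique, strictly positive linear dependence $\sum c_i a_i = 0$ among the $d+1$ facet normals of a simplex plays a double role: it rules out $I(q) = [d+1]$, and it identifies the image of $T$ as a hyperplane in $\R^{d+1}$ cut out by strictly positive weights, so that every proper nonempty sign pattern on the coordinates is attained by some $u$.
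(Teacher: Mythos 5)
Your proof is correct and follows exactly the route the paper indicates: the authors give no written proof beyond the remark that the claim "follows directly from the proof of Proposition~\ref{prop:ihalf} and the fact that a simplex has affinely independent facet normals," and your argument is precisely a careful elaboration of that hint, reducing the problem to realizing the sign pattern $I(q)$ by $(\inner{a_i,u})_i$ and solving it via the unique positive linear dependence among the $d+1$ facet normals. Nothing further is needed.
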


We call a Minkowski sum $P = P_1 + \cdots + P_\Npolys$ \Defn{exact} if 
\[
    \dim P  \ = \ \dim P_1 + \cdots + \dim P_\Npolys
\]
and we call $P$ a \Defn{cylinder} if moreover $P_1,\dots,P_\Npolys$ are simplices.
Finer even, $P$ is a \Defn{$k$-cylinder} if exactly $k$ of the
$\Npolys$
simplices are of positive dimension.

\begin{prop}\label{prop:prod_half}
    Let  $S = S_1 + \cdots + S_\Npolys$ be a cylinder and $q \in \aff(S)$. Then
there are $q_i \in \aff(S_i)$ for $i=1,\dots,\Npolys$ such
    that 
    \[
        \half{q}{S} \ = \ \half{q_1}{S_1} + \cdots +
        \half{q_\Npolys}{S_\Npolys}.
    \]
    Consequently, if $q\not \in S$, then there is a $u \neq 0$ parallel to
$\aff(S)$ such that
    $\Ihalf{u}{S} = \half{q}{S}$.
\end{prop}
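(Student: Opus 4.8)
We need to show that for a cylinder $S = S_1 + \cdots + S_\Npolys$ (exact Minkowski sum of simplices) and a point $q \in \aff(S)$, the half-open polytope $\half{q}{S}$ decomposes as a Minkowski sum $\half{q_1}{S_1} + \cdots + \half{q_\Npolys}{S_\Npolys}$ for suitable $q_i \in \aff(S_i)$, and as a consequence the "infinity" half-open version also applies when $q \notin S$.

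**Approach.**

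The plan is to work with the facet-normal description, mirroring the proof of Proposition~\ref{prop:ihalf}. The key structural input is that, since $S$ is a cylinder, the facets of $S$ come in $\Npolys$ "blocks": a facet $F$ of $S = S_1 + \cdots + S_\Npolys$ is of the form $F = S_1 + \cdots + F_i + \cdots + S_\Npolys$ where $F_i$ is a facet of $S_i$; this is the standard description of faces of an exact Minkowski sum, and exactness guarantees each such $F$ really is a facet (codimension $1$). Consequently, if $a_i$ is the outer normal of a facet $F_i \subset S_i$ inside $\aff(S_i)$, then (extending $a_i$ to $\aff(S)$ by making it constant along the other summands) $a_i$ is also an outer facet normal of the corresponding facet of $S$, and every facet of $S$ arises this way. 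So $\half{q}{S} = S \setminus \bigcup \{F : F \text{ facet of } S, \langle a_F, q\rangle > b_F\}$, and the facet $F$ corresponding to $(i, F_i)$ is removed exactly when $\langle a_i, q\rangle > b_i$, which is a condition depending only on the $\aff(S_i)$-component of $q$.

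**Key steps, in order.**

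First I would fix affine coordinates so that $\aff(S) = \aff(S_1) \times \cdots \times \aff(S_\Npolys)$ as an affine product, using exactness of the Minkowski sum; then $S = S_1 \times \cdots \times S_\Npolys$ in these coordinates and a point $q \in \aff(S)$ has components $q = (q_1, \dots, q_\Npolys)$ with $q_i \in \aff(S_i)$. Second, I would write each $S_i = \{x_i : \langle a_{ij}, x_i\rangle \le b_{ij}, j \in [m_i]\}$ by its facet inequalities inside $\aff(S_i)$; then the facet inequalities of $S$ in $\aff(S)$ are exactly $\langle a_{ij}, x_i\rangle \le b_{ij}$ over all $i, j$ (this uses exactness so that each is facet-defining). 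Third, recalling from the proof of Proposition~\ref{prop:ihalf} that $\half{q}{S} = S \setminus \bigcup_{(i,j) \in I(q)} F_{ij}$ where $I(q) = \{(i,j) : \langle a_{ij}, q_i\rangle > b_{ij}\}$, I observe that $I(q)$ splits as a disjoint union $I_1(q_1) \sqcup \cdots \sqcup I_\Npolys(q_\Npolys)$ with $I_i(q_i) = \{j : \langle a_{ij}, q_i\rangle > b_{ij}\}$, which is precisely the index set removed to form $\half{q_i}{S_i}$. Fourth, I would verify the set identity $\left(\prod_i S_i\right) \setminus \bigcup_{(i,j) \in I(q)} F_{ij} = \prod_i \left(S_i \setminus \bigcup_{j \in I_i(q_i)} F_{ij}\right)$: a point $(x_1, \dots, x_\Npolys)$ lies in the left side iff it is in $S$ and no coordinate $x_i$ lies on a removed facet $F_{ij}$ of $S$, iff each $x_i \in S_i$ avoids all removed facets $F_{ij}$, $j \in I_i(q_i)$, which is the right side. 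This gives $\half{q}{S} = \half{q_1}{S_1} + \cdots + \half{q_\Npolys}{S_\Npolys}$ back in the original coordinates. Finally, for the "consequently" clause: if $q \notin S$ then some component $q_i \notin S_i$, so the corresponding $\half{q_i}{S_i}$ is properly half-open; each $S_i$ is a simplex, so Proposition~\ref{prop:simplex_ihalf} gives a direction $u_i$ with $\half{q_i}{S_i} = \Ihalf{u_i}{S_i}$, and I can also write $\half{q_i}{S_i} = \Ihalf{u_i}{S_i}$ with $u_i = 0$ allowed when $q_i \in S_i$ (reading $\Ihalf{0}{S_i} := S_i$). Assembling $u := (u_1, \dots, u_\Npolys) \neq 0$ parallel to $\aff(S)$, the block structure of facet normals shows removing the facets of $S$ visible from infinity in direction $u$ removes, for each $i$, exactly the facets $F_{ij}$ of $S$ with $\langle a_{ij}, u_i\rangle > 0$, i.e. those visible from infinity in direction $u_i$ within $\aff(S_i)$; hence $\Ihalf{u}{S} = \sum_i \Ihalf{u_i}{S_i} = \sum_i \half{q_i}{S_i} = \half{q}{S}$.

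**Main obstacle.**

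The one step requiring genuine care, rather than bookkeeping, is the claim that every facet of $S$ is of the "block" form $S_1 + \cdots + F_i + \cdots + S_\Npolys$ and that, conversely, every such expression is actually a facet — equivalently, that the facet normals of $S$ are, up to the affine product identification, the disjoint union of the facet normals of the $S_i$ with no collisions or drops in dimension. This is exactly where exactness of the Minkowski sum is used: in general a facet of $P_1 + P_2$ can be a Minkowski sum of a facet of $P_1$ with a facet of $P_2$, or its normal cone can be lower-dimensional, but exactness ($\dim S = \sum \dim S_i$) forces the normal fan of $S$ to be (the pullback of) the product of the normal fans of the $S_i$, so codimension-$1$ cones of the product fan are precisely products of one codimension-$1$ cone of some factor with the full cones of the others. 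I would cite or quickly prove this as a lemma; everything else is the routine verification of the displayed set identity and the translation back from the product coordinates to $\R^d$.
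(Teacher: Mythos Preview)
Your proof is correct and follows essentially the same route as the paper's: identify $S$ with the Cartesian product $S_1 \times \cdots \times S_r$ via exactness, split $q = (q_1,\dots,q_r)$, and read off the first claim from the product structure of the facets, then deduce the second claim componentwise via Proposition~\ref{prop:simplex_ihalf}. The paper compresses your facet-normal bookkeeping into the single observation that $\half{Tq}{TS} = T\,\half{q}{S}$ for affine $T$, but the content is the same; your explicit handling of the case $q_i \in S_i$ (setting $u_i = 0$) spells out what the paper leaves implicit.
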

\begin{proof}
    Observe that if $T : \R^d \rightarrow \R^d$ is an affine transformation,
    then $\half{Tq}{TS} = T\half{q}{S}$. Exactness means that $S$ is affinely
    isomorphic to a Cartesian product $S_1 \times \cdots \times S_\Npolys$ and
    writing $q = (q_1,\dots,q_\Npolys) \in \aff(S) = \aff(P_1) \times \cdots \times
    \aff(P_\Npolys)$ proves the first claim. The second claim follows directly
    from Proposition~\ref{prop:simplex_ihalf}.
\end{proof}

For $k = 0,\dots,d$, we define the \Defn{cone of half-open
$k$-cylinders}
\[
    \Zyl_k \ := \ \Z_{\ge 0}\{ \Cls{S} : S \text{ half-open
    $k$-cylinder} \}.
\]
By definition $\Zyl_0 = \Znn\{ \Cls{0} = 1 \}$.  For the proof of the next
result, recall that a \Defn{dissection} of an $l$-polytope $P \subset \R^d$ is
a collection $P_1,\dots,P_\Npolys \subseteq P$ of $l$-dimensional polytopes
such that $P = P_1 \cup \cdots \cup P_\Npolys$ and $\dim P_i \cap P_j < l$ for
all $i \neq j$.

\begin{lem}\label{lem:Zyl_chain}
    \[
        \Zyl_d \ \subseteq \ \Zyl_{d-1} \ \subseteq \ \cdots \ \subseteq \  \Zyl_1 \ = \
        \WCM.
    \]
\end{lem}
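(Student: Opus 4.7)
The statement splits into two tasks: the chain of inclusions $\Zyl_k \subseteq \Zyl_{k-1}$ for $2 \le k \le d$, and the equality $\Zyl_1 = \WCM$.

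For each inclusion $\Zyl_k \subseteq \Zyl_{k-1}$, the plan is to exhibit any half-open $k$-cylinder as a disjoint union of half-open $(k-1)$-cylinders and then invoke Lemma~\ref{lem:ho}. Given an exact Minkowski sum $S = S_1 + \cdots + S_k$ of positive-dimensional simplices, fix two factors, say $S_1$ and $S_2$. Since $S_1 + S_2$ is exact, it is affinely isomorphic to the product $S_1 \times S_2$ and admits a triangulation into simplices $U_1, \ldots, U_N$ of dimension $\dim S_1 + \dim S_2$ (for instance, the staircase triangulation). Distributivity of Minkowski addition over unions yields $S = \bigcup_j (U_j + S_3 + \cdots + S_k)$, and a short dimension count using exactness of $S$ shows both that this is a dissection and that each piece $U_j + S_3 + \cdots + S_k$ is an exact Minkowski sum of $k-1$ positive-dimensional simplices, i.e.\ a $(k-1)$-cylinder. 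Lemma~\ref{lem:ho} then decomposes $\Cls{\HO{S}}$ as a $\Z_{\ge 0}$-combination of classes of properly half-open $(k-1)$-cylinders.

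For $\Zyl_1 = \WCM$, the inclusion $\WCM \subseteq \Zyl_1$ follows from the observation that each generator $\Cls{\relint S} + \Cls{\relint F}$ is exactly the class of the half-open simplex in which all facets of $S$ other than $F$ have been removed; using the positive linear dependence $\sum_i \lambda_i a_i = 0$ among the facet normals one checks via Proposition~\ref{prop:simplex_ihalf} that this half-open simplex is realizable as $\Ihalf{u}{S}$ for some direction $u$. The reverse inclusion $\Zyl_1 \subseteq \WCM$ is the main combinatorial work. I would use double induction: on the dimension $d$ of the ambient simplex, and, for fixed $d$, on the number $d - |R|$ of remaining facets of $\HO{S} = \Ihalf{u}{S}$, where $R = \{i : \inner{a_i, u} > 0\}$ is nonempty. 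When $|R| = d$, only one facet $F$ remains, so $\Cls{\HO{S}} = \Cls{\relint S} + \Cls{\relint F}$ is already a $\WCM$-generator. When $|R| < d$, pick any $j^* \in R^c$ and a direction $u'$ for which the set of visible facets becomes exactly $R' = R \cup \{j^*\}$; then
\[
    \Ihalf{u}{S} \ = \ \Ihalf{u'}{S} \ \uplus \ \Bigl( F_{j^*} \setminus \textstyle\bigcup_{i \in R} F_i \Bigr).
\]
The first piece has strictly more removed facets and lies in $\WCM$ by the inner induction; the second is a properly half-open $(d-1)$-simplex inside $F_{j^*}$, with its $|R|$ facets $F_{j^*} \cap F_i$ ($i \in R$) removed, and lies in $\WCM$ by the outer induction on $d$.

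The part I expect to require the most care is the inductive step above: one must verify that $F_{j^*} \setminus \bigcup_{i \in R} F_i$ genuinely fits the paper's definition of a properly half-open simplex, i.e.\ that there exists a direction $u''$ parallel to $\aff(F_{j^*})$ with $\Ihalf{u''}{F_{j^*}}$ equal to this set. This reduces to applying Proposition~\ref{prop:simplex_ihalf} to $F_{j^*}$, regarded as its own $(d-1)$-simplex with facets $\{F_{j^*} \cap F_i\}_{i \neq j^*}$, and is where one exploits the extra freedom of working with a direction (rather than a point) provided by Proposition~\ref{prop:ihalf}. The dimension count used in the cylinder dissection for $\Zyl_k \subseteq \Zyl_{k-1}$ is by comparison routine.
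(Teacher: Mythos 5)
Your proposal is correct and follows essentially the same route as the paper: the chain of inclusions via the staircase triangulation of $S_1+S_2$ together with Lemma~\ref{lem:ho}, and the equality $\Zyl_1=\WCM$ by peeling off one facet at a time, your identity $\Ihalf{u}{S}=\Ihalf{u'}{S}\uplus\bigl(F_{j^*}\setminus\bigcup_{i\in R}F_i\bigr)$ being exactly the paper's $\Cls{S}=\Cls{S\setminus F}+\Cls{F}$ with a slightly different induction bookkeeping. The only point left implicit is the case of a \emph{closed} simplex (where $R=\emptyset$), which also lies in $\Zyl_1$; it is absorbed by the same induction after first removing a single facet.
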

\begin{proof}
    Let $S = S_1 + S_2 + \cdots + S_{\Npolys}$ be an $\Npolys$-cylinder with $\Npolys \ge 2$.
    Choose a dissection 
    \[
        S_1 + S_2 \ = \ T_1 \cup \cdots \cup T_m
    \]
    into simplices coming, for example, from the \emph{staircase
    triangulation}~\cite[Sect.~6.2.3]{DLRS} of $S_1 + S_2 \cong S_1 \times S_2$.
    Since $\aff(S_1+S_2) = \aff(T_i)$ for all $1 \le i \le m$, it follows that 
    \[
        S \ = \ \bigcup_{i = 1}^m (T_i + S_3 + \cdots + S_\Npolys)
    \]
    is a dissection of $S$ into $(\Npolys-1)$-cylinders. Applying
    $\half{q}{\cdot}$ to both sides for some point $q$ yields a partition of
    $\half{q}{S}$ into half-open $(\Npolys-1)$-cylinders.

    For the equality, note that we only have to show that $\Zyl_1 \subseteq
    \WCM$. Let $S$ be a half-open $s$-simplex. Let us write $M(S)\geq 1$ for
    the number of facets that were not removed from the underlying closed
    simplex $\bar{S}$. If $M(S) = 1$, then $\Cls{S} \in \WCM$ by definition.
    If $M(S) > 1$, then let $H$ be a facet-defining hyperplane of $\bar{S}$
    such that $F = S \cap H \neq \emptyset$. It is easy to see that $F$ is a
    half-open $(s-1)$-simplex with $M(F) < M(S)$ and we conclude
    \[
        \Cls{S} \ = \ \Cls{S \setminus F} + \Cls{F} \ \in \ \WCM
    \]
    by induction.
\end{proof}

Dissections of simplices into cylinders are widely used in connection with
simple valuations. As we do not want to neglect `lower dimensional' parts, our
results have to be more refined. The next lemma is key to the proof of
Theorem~\ref{thm:CM_weakly}.

\begin{lem} \label{lem:hosimplex_binom}
    Let $\HO{S} \subset \R^d$ be a half-open simplex of dimension $d >
    0$. Then
    \[
        \Cls{n\HO{S}} \ = \ 
         \zeta_0 + 
        \binom{n}{1} \zeta_1 + 
        \binom{n}{2} \zeta_1 + 
        \cdots + 
        \binom{n}{d} \zeta_d,
    \]
    where $\zeta_k \in \Zyl_{k}$ for all $k=0,\dots,d$. If $\HO{S}$ is
    properly half-open, then $\zeta_0 = 0$, otherwise $\zeta_0=\Cls{0}$.
\end{lem}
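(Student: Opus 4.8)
The plan is to prove the identity by induction on $d = \dim \HO{S}$, peeling off one dimension at a time via a dissection of $n\HO{S}$ into an $n$-fold ``staircase'' of cylinders, exactly as in the proof of Lemma~\ref{lem:Zyl_chain}. The base case $d=1$ is immediate: a half-open segment $\HO{S}$ satisfies $\Cls{n\HO{S}} = \zeta_0 + \binom{n}{1}\zeta_1$ where $\zeta_1 = \Cls{\HO{S}} \in \Zyl_1$ (a half-open $1$-cylinder) and $\zeta_0$ is $\Cls{0}$ or $0$ according to whether $\HO{S}$ is closed or properly half-open. For the inductive step, write $\HO{S} = \half{q}{S}$ with $S = \conv\{v_0,\dots,v_d\}$ a $d$-simplex. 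By Proposition~\ref{prop:simplex_ihalf} we may assume $\HO{S} = \Ihalf{u}{S}$ for some direction $u$; this is the key technical convenience, because $\Ihalf{u}{\cdot}$ behaves well under translation and scaling, so $n\,\Ihalf{u}{S} = \Ihalf{u}{(nS)}$ and half-openness is ``seen from infinity'' independently of $n$.

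The main step is the dissection. Set $\Delta := \conv\{v_1 - v_0,\dots,v_d-v_0\}$, so that $nS$ is a translate of $\Delta + (n-1)\Delta + \cdots$ — more precisely, the standard staircase triangulation of the Cayley-type construction shows $nS = \bigcup_{j} (S + w_j)$ can be refined, but the cleaner route is: dissect $nS$ into $n$ pieces by the hyperplanes $H_i = \{x : \inner{a_0, x} = c_i\}$ parallel to the facet $F_0$ opposite $v_0$, where $F_0$ is chosen to be a facet \emph{not} visible from $u$ (possible since at least one facet is kept, as $\HO{S}$ is a genuine half-open simplex, never open). Each slab $nS \cap \{c_{i-1} \le \inner{a_0,x} \le c_i\}$ is affinely a prism over $(d-1)$-dimensional data: its ``bottom'' is a translate of $(i-1)\Delta_0$ and its ``top'' a translate of $i\Delta_0$, where $\Delta_0$ is a $(d-1)$-simplex (the recession of $F_0$), so the slab is a union of two cylinders — a $d$-cylinder $S' + \{pt\}$ and a lower piece. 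Carried out carefully via the staircase triangulation of $S_1 \times S_2$ as in Lemma~\ref{lem:Zyl_chain}, this expresses $n\HO{S}$ as a disjoint union (using Lemma~\ref{lem:ho} to distribute the half-open operation) of: one copy of $\Ihalf{u}{S}$-like $d$-cylinders occurring with multiplicity counted by $\binom{n}{1}$ or determined recursively, plus $(d{-}1)$-cylinders whose multiplicities assemble, via the induction hypothesis applied to the $(d{-}1)$-dimensional half-open simplex $\Ihalf{u}{F_0}$, into the coefficients $\binom{n}{k}\zeta_k$ with $\zeta_k \in \Zyl_k$ for $k \le d-1$. The new top-degree term contributes $\binom{n}{d}\zeta_d$ with $\zeta_d$ a sum of half-open $d$-cylinders, i.e.\ $\zeta_d \in \Zyl_d$.

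Concretely I would organize the coefficient bookkeeping as follows: since $\Cls{n\HO{S}}$ is polynomial in $n$ of degree $\le d$ (Theorem~\ref{thm:poly}), write $\Cls{n\HO{S}} = \sum_{k=0}^d \binom{n}{k}\zeta_k$ with $\zeta_k = \Diff^k\Cls{n\HO{S}}\big|_{n=0} \in \PiL$ uniquely determined; it remains only to certify $\zeta_k \in \Zyl_k$. Using the dissection above, $\Cls{n\HO{S}} = \Cls{\Ihalf{u}{S}}\cdot(\text{something}) + \Cls{n\text{ of the base}}$; more usefully, the dissection gives a recursion $\Cls{(n{+}1)\HO{S}} - \Cls{n\HO{S}} = \Cls{C_n}$ where $C_n$ is the newly added half-open $d$-cylinder-plus-lower-cylinders slab, and $\Cls{C_n}$ — being a half-open cylinder over the $(d{-}1)$-dimensional scaled half-open simplex $\Ihalf{u}{F_0}$ with a $1$-simplex (interval) factor — equals $\Cls{\,[0,1)\,}\cdot\Cls{n'\,\Ihalf{u}{F_0}}$ up to the precise scaling $n'$, which by the induction hypothesis lies in $\Cls{\Zyl_1}\cdot(\Zyl_0 + \Zyl_1\binom{n'}{1} + \cdots) \subseteq \Zyl_1 + \Zyl_2 + \cdots$. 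Summing this difference from $0$ to $n-1$ and using $\sum_{n'} \binom{n'}{k} = \binom{n}{k+1}$ shifts each $\Zyl_k$-term into a $\binom{n}{k+1}\Zyl_{k+1}$-term, which is exactly the claimed shape, and the constant term $\zeta_0$ is visibly $\Cls{0}$ or $0$ by evaluating at $n=0$ (for $n=0$, $0\cdot\HO{S} = \{0\}$ if $\HO{S}$ closed, and $=\emptyset$ if properly half-open, since $0 \notin \HO{S}$ forces $0\,\HO{S} = \emptyset$).

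The hard part will be the geometric lemma underlying the dissection: verifying that the slabs between parallel hyperplanes really do decompose into cylinders with the correct combinatorial type, and — more delicately — tracking the half-open structure through Lemma~\ref{lem:ho} so that every piece remains a \emph{half-open} cylinder of the advertised $k$ (in particular that no piece accidentally becomes fully open or loses/gains dimension). One must also be careful that the direction $u$ remains general with respect to every piece of the dissection, which follows because the dissecting hyperplanes are all parallel to a fixed facet $F_0$ of $S$ and hence their normal is $a_0$, a direction for which $\inner{a_0,u}$ is already controlled; genericity of $u$ with respect to the finitely many new facet normals can be arranged by an initial perturbation, invoking Proposition~\ref{prop:ihalf} to see the half-open polytope is unchanged.
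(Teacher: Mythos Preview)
Your inductive strategy rests on the assertion that the slab $C_n := (n{+}1)\HO{S} \setminus n\HO{S}$ is a prism over a scaled copy of the chosen facet $F_0$, so that $\Cls{C_n} = \Cls{[0,1)} \cdot \Cls{n'\,\Ihalf{u}{F_0}}$ for some $n'$. This is false: the slab is a \emph{frustum} of the simplex, not a prism. In the realization $nS = \{0 \le x_1 \le \cdots \le x_d \le n\}$ with $F_0 = S\cap\{x_d = 1\}$, the layer $(n{+}1)S \cap \{n \le x_d \le n+1\}$ has as bottom a $(d{-}1)$-simplex of size $n$ and as top one of size $n{+}1$; the fibre over $x_d = t$ is a simplex of size $t$, so there is no product structure to exploit. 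Already for $d=2$ the slab is a trapezoid of area $n+\tfrac12$, which no $[0,1)+n'F_0$ can match. Consequently there is no single scaled half-open $(d{-}1)$-simplex to which the induction hypothesis applies, and the hockey-stick summation $\sum_{n'<n}\binom{n'}{k} = \binom{n}{k+1}$ never gets started. You yourself note earlier that the slab must be split further (``a union of two cylinders''), but the concrete recursion you then write down forgets this, and that is the gap.

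The paper avoids induction on $d$ and dissects $nS$ directly into cylinders. In the order-simplex model $nS = \{0 \le x_1 \le \cdots \le x_d \le n\}$, assign to a generic $p \in nS$ the integer point $\bar p = (\lfloor p_1\rfloor,\dots,\lfloor p_d\rfloor)$, a weakly increasing vector in $\{0,\dots,n-1\}^d$ encoded by multiplicities $(a_1,\dots,a_n)$ with $\sum a_i = d$. The cell of $p$ is then $\bar p + S(\bar p)$ with $S(\bar p)$ a product of standard simplices of dimensions $a_1,\dots,a_n$ --- already a $k$-cylinder, where $k$ is the number of nonzero $a_i$. Two cells are translates of one another exactly when their ordered lists of nonzero parts coincide, and for each such list there are $\binom{n}{k}$ ways to place the $k$ nonzero entries among the $n$ slots; this is where the binomial coefficients come from, all at once. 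Passing to $\Ihalf{u}{\cdot}$ via Lemma~\ref{lem:ho} and Proposition~\ref{prop:prod_half} then yields the half-open statement. If you wish to salvage the inductive route, each frustum must itself be dissected into cylinders whose shapes vary with $n$ in a controlled way; carrying that out carefully essentially reproduces the multisubset dissection above.
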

\begin{proof}
    If $\dim \HO{S} = 0$, then the result is trivial. Otherwise if $\HO{S}$ is closed and 
$\dim S = d >
    0$, we can appeal to the proof of Lemma~\ref{lem:Zyl_chain} and write
    $\HO{S} =
    S_0 \uplus S_1 \uplus \cdots \uplus S_d$ where $S_i$ is a properly
    half-open simplex of dimension $i$ if $i>0$ and $S_0$ is a vertex. Thus, it
    suffices to assume that $\HO{S}$
    is properly half-open. 

\newcommand\bp{\overline{p}}%
    Up to affine transformation, $nS \ = \ \{ x \in \R^d : 0 \le x_1 \le x_2
    \le \cdots \le x_d \le n \}.$
    Let $p \in nS$ be a generic point. Then $p$ is of the form $0 < p_1 <
    \cdots < p_d < n$. The coordinates of $\bp = ( \lfloor p_1 \rfloor,
    \lfloor p_2 \rfloor, \dots, \lfloor p_d \rfloor)$ are weakly increasing
    integers between $0$ and $n-1$ and thus the points $\bp$ are in bijection
    with a $d$-multisubset of $[n]$. For $\bp = (0^{a_1},1^{a_2}, \dots,
    (n-1)^{a_n})$, we write $(a_1,\dots,a_n)$ for the corresponding
    multisubset and $a_1 + \cdots + a_n = d$. In particular, 
    \[
        p - \bp \in
        S(\bp) \ := \ \left\{ x \in \R^d:
        \begin{array}{c}
            0 \le x_1 \le \cdots \le x_{b_{1}} \le 1 \\
            0 \le x_{b_1+1} \le \cdots \le x_{b_2} \le 1 \\
            \cdots\\
            0 \le x_{b_{n-1}+1} \le \cdots \le x_{b_n} \le 1
        \end{array}
        \right\},
    \]
    where $b_i := a_1 + \cdots + a_i$. Since every generic point $p$ gives
    rise to a unique such cell $\bp + S(\bp)$, it follows that 
    \[
        nS \ = \ \bigcup_{\bp \in (n-1)S \cap \Z^d} \bp + S(\bp)
    \]
    is a dissection. By construction $S(\bp)$ is a cylinder, i.e., a Cartesian
    product of $1 \le k \le d$ standard simplices, where $k$ is the number of
    $i$ with $a_i >0$. Using Proposition~\ref{prop:simplex_ihalf}, pick $u$
    generic with respect to $S$ such that $S=\HO{S} = \Ihalf{u}{S}$. It follows
    from Lemma~\ref{lem:ho} that  for any fixed $n$
    \[
        \Ihalf{u}{nS} \ = \ \biguplus_{\bp \in (n-1)S \cap \Z^d} \bp +
        \Ihalf{u}{S(\bp)}
    \]
    is a half-open dissection of $\HO{S}$ into half-open cylinders.
    Note that there are exactly $\binom{n}{k}$ many $k$-multisubsets
    $\bp' \in (n-1)S \cap \Z^d$ such that $S(\bp')=S(\bp)$ and hence
    \[
        \Cls{n\HO{S}} \ = \ 
        \binom{n}{1} \zeta_1 + 
        \binom{n}{2} \zeta_1 + 
        \cdots + 
        \binom{n}{d} \zeta_d,
    \]
    where $\zeta_k \in \Zyl_{k}$ for all $k=1,\dots,d$.
\end{proof}

A nice consequence of Lemma~\ref{lem:hosimplex_binom} is that we can scale the
individual summands in a cylinder.

\begin{lem} \label{lem:HOprod_binom}
    Let $\HO{S} = \HO{S}_1 + \cdots + \HO{S}_\Npolys \subset \R^d$ be a
    half-open cylinder.  Then 
    \[
        \Cls{n_1\HO{S}_1 + n_2\HO{S}_2 + \cdots + n_\Npolys\HO{S}_\Npolys} \ =
        \ \sum_{\k \in \Z^\Npolys_{\ge 0}} \zeta_{\k} \binom{n_1}{k_1}\cdots
        \binom{n_\Npolys}{k_\Npolys},
    \]
    for all $n_1,\dots,n_\Npolys \ge 0$ for some, where $\zeta_\k \in
    \Zyl_{|\k|}$.  If $\HO{S}$ is properly half-open, then $\zeta_{\0} = 0$.
\end{lem}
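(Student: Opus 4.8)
The plan is to reduce the multivariate statement to the single-variable Lemma~\ref{lem:hosimplex_binom} applied to each summand separately, and then multiply the resulting expansions together inside the polytope algebra. First I would use that $\HO{S} = \HO{S}_1 + \cdots + \HO{S}_\Npolys$ is a cylinder, so by Proposition~\ref{prop:prod_half} (applied coordinate-wise to the Cartesian-product structure, or rather its half-open refinement via Lemma~\ref{lem:ho}) one has $\Cls{\HO{S}} = \Cls{\HO{S}_1}\cdots\Cls{\HO{S}_\Npolys}$ in $\PiL$, and more generally $\Cls{n_1\HO{S}_1 + \cdots + n_\Npolys\HO{S}_\Npolys} = \prod_{i=1}^\Npolys \Cls{n_i\HO{S}_i}$ because the class map is a ring homomorphism with respect to Minkowski sum. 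Here one should be slightly careful: the $\HO{S}_i$ are half-open simplices of dimension $d_i := \dim S_i$, and one may freely regard each as living in its own affine span; the exactness hypothesis guarantees the Minkowski sum of the half-open pieces is again the half-open cylinder, which is exactly what Proposition~\ref{prop:prod_half} and Lemma~\ref{lem:ho} provide.

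Next, for each $i$ with $d_i > 0$, Lemma~\ref{lem:hosimplex_binom} gives
\[
    \Cls{n_i\HO{S}_i} \ = \ \sum_{k_i=0}^{d_i} \binom{n_i}{k_i}\, \zeta^{(i)}_{k_i},
\]
with $\zeta^{(i)}_{k_i} \in \Zyl_{k_i}$, and $\zeta^{(i)}_0 = 0$ when $\HO{S}_i$ is properly half-open, $\zeta^{(i)}_0 = \Cls{0}$ otherwise; for $d_i = 0$ the factor is simply $\Cls{0} = 1$, which fits the same template with $\zeta^{(i)}_0 = \Cls{0}$. Multiplying these $\Npolys$ expansions and collecting the coefficient of $\binom{n_1}{k_1}\cdots\binom{n_\Npolys}{k_\Npolys}$ yields $\zeta_\k = \prod_{i=1}^\Npolys \zeta^{(i)}_{k_i}$. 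The key point is then that $\Zyl_{a}\cdot\Zyl_{b} \subseteq \Zyl_{a+b}$: the product of a half-open $a$-cylinder and a half-open $b$-cylinder is, using Proposition~\ref{prop:prod_half} once more, a half-open $(a+b)$-cylinder, since concatenating the simplex factors of each gives an exact Minkowski sum with exactly $a+b$ positive-dimensional simplices. (If either factor is $\Cls{0}$, the product lies in $\Zyl_{b}$ resp. $\Zyl_{a}$, consistent with $\Zyl_0\cdot\Zyl_b\subseteq\Zyl_b$.) Hence $\zeta_\k \in \Zyl_{k_1}\cdots\Zyl_{k_\Npolys} \subseteq \Zyl_{|\k|}$, as claimed.

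Finally, for the statement about $\zeta_\0$: if $\HO{S}$ is properly half-open then, since $\HO{S}$ is the Minkowski sum of the $\HO{S}_i$, at least one $\HO{S}_i$ with $d_i > 0$ must be properly half-open (a Minkowski sum of closed sets is closed, and dimension-$0$ pieces are always closed), so $\zeta^{(i)}_0 = 0$ for that index and therefore $\zeta_\0 = \prod_i \zeta^{(i)}_0 = 0$. I expect the main obstacle to be the bookkeeping in establishing $\Zyl_a\cdot\Zyl_b\subseteq\Zyl_{a+b}$ carefully — in particular making sure the concatenated Minkowski sum is genuinely exact and that the half-open structure of the product agrees with taking $\Ihalf{u}{\cdot}$ of the combined cylinder — but this is exactly the content of Proposition~\ref{prop:prod_half} together with the observation that the class map respects Minkowski sums, so no new ideas are needed beyond the single-variable lemma.
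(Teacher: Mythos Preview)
Your approach is essentially the paper's: factor the class as a product over the $\Npolys$ summands using exactness, apply Lemma~\ref{lem:hosimplex_binom} to each factor, and multiply out. The one step that does not go through as written is the claim $\Zyl_a \cdot \Zyl_b \subseteq \Zyl_{a+b}$. As a general fact this is false: take two segments lying on the same line, each a $1$-cylinder; their Minkowski sum is again a segment, hence a $1$-cylinder, not a $2$-cylinder, because concatenating the simplex factors does \emph{not} give an exact sum when the affine spans overlap. Proposition~\ref{prop:prod_half} does not help here, since it presupposes exactness rather than producing it.

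What rescues the argument is that the half-open cylinders appearing in the expansion of $\Cls{n_i\HO{S}_i}$ from Lemma~\ref{lem:hosimplex_binom} all live inside $\aff(S_i)$, and exactness of $S_1 + \cdots + S_\Npolys$ forces these affine spans to be complementary. The paper makes this explicit by introducing, for a linear subspace $U$, the subcone $\Zyl_k(U) \subseteq \Zyl_k$ generated by half-open $k$-cylinders in $U$, and observing that $\Zyl_{k_1}(U_1) \cdot \Zyl_{k_2}(U_2) \subseteq \Zyl_{k_1+k_2}(U_1 + U_2)$ whenever $U_1 + U_2 = U_1 \oplus U_2$. With that refinement in place your argument is exactly the paper's proof.
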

\begin{proof}
    For a linear subspace $U \subset \R^d$, let $\Zyl_k(U) \subset \Zyl_k$ be
    the subgroup generated by the half-open $k$-cylinders in $U$. If $U_1,U_2
    \subset \R^d$ are subspaces such that $U_1+U_2 = U_1 \oplus U_2$, then
    $\Zyl_{k_1}(U_1) \cdot \Zyl_{k_2}(U_2) \subseteq \Zyl_{k_1+k_2}(U_1+U_2)$.
    We can assume that $0 \in S_i$ for $i=1,\dots,m$ and let $U_i$ be the
    linear span of $S_i$. Since the sum $S_1 + \cdots + S_\Npolys$ is exact, it
    follows that 
    \[
        \Cls{n_1\HO{S}_1 + n_2\HO{S}_2 + \cdots + n_\Npolys\HO{S}_\Npolys}
        \ = \ 
        \Cls{n_1\HO{S}_1} \cdot \Cls{ n_2\HO{S}_2} \cdots
        \Cls{n_\Npolys\HO{S}_\Npolys}
    \]
    and the claim follows from Lemma~\ref{lem:hosimplex_binom}.
\end{proof}

\subsection{Fine mixed half-open dissections}
\newcommand\fmSub{\mathcal{M}}
\newcommand\nn{\mathbf{n}}

Let $P_1,\dots,P_\Npolys \subset \R^d$ be nonempty polytopes such that $P :=
P_1 + \cdots + P_\Npolys$ is of dimension $k$. A \Defn{fine mixed dissection}
of $P$ is a dissection
\[
    P \ = \ R_1 \cup R_2 \cup \cdots \cup R_m
\]
in which each $R_i$ is a cylinder of the form $R_i = R_{i1} + \cdots +
R_{i\Npolys}$ such that $R_{ij} \subseteq P_j$ is a $\LL$-polytope. In order
to find such a fine mixed dissection, we can use the well-known Cayley trick
adapted to dissections: Define the Cayley polytope 
\[
    \Cay(P_1,\dots,P_\Npolys) \ = \ \conv\left( \bigcup\nolimits_i P_i \times \{e_i\}\right) \ \subset \R^d
    \times \R^\Npolys,
\]
where we write $e_1,\dots,e_\Npolys$ for the standard basis of $\R^\Npolys$.
Let $W = \{ (x,y) \in \R^d \times \R^\Npolys : y_i = \frac 1 \Npolys \text{ for }
i=1,\dots, \Npolys \}$. Then
$\Cay(P_1,\dots,P_\Npolys) \cap W \cong \frac{1}{\Npolys}(P_1 + \cdots + P_\Npolys)$. Of course,
we may intersect a dissection of $P_1+\cdots+P_\Npolys$ and inspect the pieces
separately. 
\begin{lem}[{Cayley trick~\cite[Sect.~9.2]{DLRS}}]
    Fine mixed dissections of $P_1 + \cdots + P_\Npolys$ are in bijection to
    dissections of $\Cay(P_1,\dots,P_\Npolys)$.
\end{lem}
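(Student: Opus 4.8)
The plan is to exhibit the bijection explicitly via the section $W$ and then check that the two defining conditions of a dissection—covering and the dimension-drop on pairwise intersections—transfer back and forth. First I would recall the classical Cayley trick for \emph{triangulations} (or more generally polyhedral subdivisions) as stated in \cite[Sect.~9.2]{DLRS}: there is an inclusion-reversing--free correspondence between polyhedral subdivisions of $\Cay(P_1,\dots,P_\Npolys)$ and so-called \emph{mixed subdivisions} of $P_1+\cdots+P_\Npolys$, obtained by slicing with the hyperplane $W$ and rescaling by $\Npolys$. The only thing that needs checking here is that the weaker notion ``dissection'' on one side matches the weaker notion ``fine mixed dissection'' on the other. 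So the real content is bookkeeping: a cell $C$ of $\Cay(P_1,\dots,P_\Npolys)$ meeting $W$ in full dimension is, by the structure of the Cayley embedding, of the form $\conv\bigl(\bigcup_i C_i\times\{e_i\}\bigr)$ with $C_i\subseteq P_i$, and its slice $\Npolys\,(C\cap W)$ is exactly the Minkowski sum $C_1+\cdots+C_\Npolys$, which is a cylinder (it is the image of $C_1\times\cdots\times C_\Npolys$ under the Minkowski addition map, and the Cayley geometry forces this sum to be exact, i.e. $\dim C = \dim C_1+\cdots+\dim C_\Npolys + (\Npolys-1)$, so the slice has dimension $\sum_i\dim C_i$). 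Conversely a cylinder summand $R=R_1+\cdots+R_\Npolys$ of a fine mixed dissection lifts to $\conv\bigl(\bigcup_i R_i\times\{e_i\}\bigr)$.

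Next I would verify that this assignment is a bijection at the level of \emph{collections}. Given a dissection $\Cay(P_1,\dots,P_\Npolys)=C_1\cup\cdots\cup C_m$ by full-dimensional cells, slicing with $W$ and rescaling gives pieces $R_i:=\Npolys\,(C_i\cap W)$; since $W$ meets the relative interior of each $C_i$ (every $C_i$ is full-dimensional in the Cayley polytope, which has $W$ through its interior), each $R_i$ is a full-dimensional polytope in $P_1+\cdots+P_\Npolys$, they cover because the $C_i$ cover, and $\dim(R_i\cap R_j)=\dim\bigl((C_i\cap C_j)\cap W\bigr)< k$ because $\dim(C_i\cap C_j)$ is already one less than $\dim\Cay$ and intersecting with a hyperplane drops dimension by at most one more—more carefully, $C_i\cap C_j$ lies in a common proper face-like subset so its slice is genuinely lower-dimensional. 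In the reverse direction, given a fine mixed dissection $P_1+\cdots+P_\Npolys=R_1\cup\cdots\cup R_m$ with $R_i=R_{i1}+\cdots+R_{i\Npolys}$, form $C_i:=\conv\bigl(\bigcup_j R_{ij}\times\{e_j\}\bigr)$; these are full-dimensional in $\Cay$, their union is all of $\Cay$ (a point $(x,y)\in\Cay$ with $y$ in the simplex spanned by a subset of the $e_j$ is a convex combination whose $W$-slice, suitably scaled, lands in some $R_i$, and one tracks the combination back up), and pairwise intersections drop dimension. The two constructions are mutually inverse because slicing-then-lifting and lifting-then-slicing each recover the original cell, using that a full-dimensional cell of $\Cay$ is determined by its $W$-slice together with the obvious ``which $P_i$ does each vertex come from'' data.

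The main obstacle I anticipate is the honest verification that dissection-type intersections (dimension strictly smaller, but overlaps of lower-dimensional faces \emph{allowed}, unlike in a subdivision) are preserved in both directions. For subdivisions this is automatic from the face lattice correspondence, but for dissections one must argue directly with dimensions of slices of intersections rather than with faces; the key inequality is that if $C_i,C_j$ are distinct full-dimensional cells of a dissection of $\Cay$ then $\dim(C_i\cap C_j)\le \dim\Cay-1$, and intersecting with the interior-meeting hyperplane $W$ yields $\dim(R_i\cap R_j)\le k-1$, with the symmetric statement going the other way using that the lift of a dimension-$<k$ polytope in $P_1+\cdots+P_\Npolys$ has dimension $<\dim\Cay$. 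A secondary, purely notational nuisance is the factor $\tfrac1\Npolys$ rescaling, which is harmless since Minkowski sums and dissections are preserved under the dilation $\tfrac1\Npolys(\cdot)$. Once these dimension counts are in place, the bijection is the classical Cayley-trick bijection restricted to (and corestricted from) the sublattice of all—not just those arising from a common point configuration—full-dimensional cells, so invoking \cite[Sect.~9.2]{DLRS} for the underlying correspondence and supplying the dissection-specific dimension checks completes the proof.
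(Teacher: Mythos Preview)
The paper supplies no proof of this lemma; it is stated with a citation to \cite[Sect.~9.2]{DLRS} and used as a black box. Your plan is the natural way to unpack that citation and follows the classical Cayley-trick argument.

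There is one genuine gap in the direction from dissections of $\Cay$ to fine mixed dissections. You assert that a full-dimensional cell $C$ of a dissection of $\Cay(P_1,\dots,P_\Npolys)$ is, ``by the structure of the Cayley embedding'', of the form $\conv\bigl(\bigcup_i C_i\times\{e_i\}\bigr)$, and that ``the Cayley geometry forces this sum to be exact''. Neither holds for an \emph{arbitrary} dissecting cell. A cell of a dissection need not have its vertices in $\bigcup_i P_i\times\{e_i\}$ at all; and even when it does, exactness of $C_1+\cdots+C_\Npolys$ is automatic only when $C$ is a \emph{simplex} (a full-dimensional simplex in $\Cay$ has $k+\Npolys$ vertices with $k=\dim(P_1+\cdots+P_\Npolys)$, so if $a_i+1$ of them lie over $e_i$ then $\sum a_i=k$ and each $C_i$ is an $a_i$-simplex, which forces exactness; for non-simplex cells this vertex count breaks down, and moreover the $C_i$ need not be simplices, so the slice is not a cylinder in the paper's sense). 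The lemma---and its only use in the paper, which invokes placing \emph{triangulations}---should be read as a bijection between fine mixed dissections of $P_1+\cdots+P_\Npolys$ and dissections of $\Cay(P_1,\dots,P_\Npolys)$ into simplices on the point configuration $\bigcup_i P_i\times\{e_i\}$. Once that hypothesis is in place, the remainder of your argument (covering, the dimension drop on pairwise intersections, and that slice and lift are mutually inverse) goes through cleanly.
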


For $\nn = (n_1,\dots,n_\Npolys) \in \Z_{>0}^\Npolys$, we write $\nn P =
n_1P_1 + \cdots + n_\Npolys P_\Npolys$. The following is a consequence of the
Cayley trick.
\begin{cor}
    Let $P = P_1 + \cdots + P_\Npolys  = R_1 \cup R_2 \cup \cdots \cup R_m$ be a
    fine mixed dissection. Then for every $\nn \in \Z_{>0}^\Npolys$
    \[
        \nn P \ = \ \nn R_1 \cup \nn R_2 \cup \cdots \cup \nn R_m.
    \]
    is a fine mixed dissection.
\end{cor}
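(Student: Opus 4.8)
The plan is to reduce everything to the Cayley trick, used in both directions. The defining conditions of a fine mixed dissection split into a routine ``local'' part and the substantive ``global'' part. For the local part one checks that each $\nn R_k = n_1 R_{k1} + \cdots + n_\Npolys R_{k\Npolys}$ is again a cylinder with $n_j R_{kj} \subseteq n_j P_j$ an $\LL$-polytope: a positive dilate of a simplex is a simplex; $\dim n_j R_{kj} = \dim R_{kj}$, so exactness of $R_k$ is inherited by $\nn R_k$; $R_{kj}\subseteq P_j$ gives $n_j R_{kj}\subseteq n_j P_j$; and since each $n_j$ is a positive integer, vertices stay in $\LL$ whether $\LL$ is a lattice or a subspace over a subfield of $\R$. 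It then remains to prove the global part, namely that $\nn P = \nn R_1 \cup \cdots \cup \nn R_m$ with $\dim(\nn R_k \cap \nn R_l) < \dim \nn P$ for $k \ne l$.

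For this I would pass to the Cayley polytope. By the Cayley trick the given fine mixed dissection is the image of a dissection $\Cay(P_1,\dots,P_\Npolys) = C_1 \cup \cdots \cup C_m$ with $C_k = \conv\big(\bigcup_j R_{kj}\times\{e_j\}\big)$; applied instead to $n_1P_1 + \cdots + n_\Npolys P_\Npolys$, it identifies fine mixed dissections of $\nn P$ with dissections of $\Cay(n_1P_1,\dots,n_\Npolys P_\Npolys)$, the cell $\nn R_k$ corresponding to $C_k' := \conv\big(\bigcup_j n_jR_{kj}\times\{e_j\}\big)$. So it suffices to show that $\Cay(n_1P_1,\dots,n_\Npolys P_\Npolys) = C_1' \cup \cdots \cup C_m'$ is again a dissection.

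I would prove this by constructing a homeomorphism $\Psi\colon \Cay(P_1,\dots,P_\Npolys) \to \Cay(n_1P_1,\dots,n_\Npolys P_\Npolys)$ carrying each $C_k$ onto $C_k'$. Since the sum $R_k = R_{k1} + \cdots + R_{k\Npolys}$ is exact — and hence so is $\sum_j \lambda_j R_{kj}$ for every $\lambda$ in the simplex $\conv(e_1,\dots,e_\Npolys)$ — every point of $C_k$ is uniquely of the form $\big(\sum_j \lambda_j p_j,\lambda\big)$ with $p_j \in R_{kj}$ (the choice of $p_j$ being immaterial when $\lambda_j = 0$). Setting $\Psi\big(\sum_j \lambda_j p_j,\lambda\big) := \big(\sum_j \lambda_j n_j p_j,\lambda\big)$ gives a continuous bijection $C_k \to C_k'$ whose inverse is the analogous formula with $n_j$ replaced by $1/n_j$; these partial maps agree on overlaps because in a mixed dissection $C_k \cap C_l$ decomposes compatibly, so they glue to a homeomorphism $\Psi$. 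Then $\{C_k'\}_k = \{\Psi(C_k)\}_k$ is a dissection of $\Psi(\Cay(P_1,\dots,P_\Npolys)) = \Cay(n_1P_1,\dots,n_\Npolys P_\Npolys)$, and translating back through the Cayley trick gives that $\nn P = \nn R_1 \cup \cdots \cup \nn R_m$ is a fine mixed dissection.

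The hard part will be the gluing of the maps $\Psi|_{C_k}$: one must verify that if a point lies in two cells $C_k$ and $C_l$, then its decomposition $(p_j) \in \prod_j R_{kj}$ and its decomposition $(p_j') \in \prod_j R_{lj}$ have the same image $\sum_j \lambda_j n_j p_j = \sum_j \lambda_j n_j p_j'$. This is precisely where one uses that $C_k$ and $C_l$ come from a dissection of the Cayley polytope — concretely, that $C_k \cap C_l = \conv\big(\bigcup_j (R_{kj}\cap R_{lj})\times\{e_j\}\big)$, so that the common point has a single decomposition into $\prod_j (R_{kj}\cap R_{lj})$ serving for both cells; the rest of the argument is formal.
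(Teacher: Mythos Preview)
Your local checks are fine, but the global part takes an unnecessarily circuitous route and the gluing step, as written, has a genuine gap. The paper's argument is a single application of the Cayley trick: a dissection $\Cay(P_1,\dots,P_r)=C_1\cup\cdots\cup C_m$ yields a fine mixed dissection of $\lambda_1P_1+\cdots+\lambda_rP_r$ for \emph{every} interior point $\lambda$ of the standard simplex, simply by slicing at the affine subspace $W_\lambda=\{y=\lambda\}$. Taking $\lambda_j=n_j/\sum_i n_i$ and rescaling by $\sum_i n_i$ gives $\nn P=\bigcup_k \nn R_k$ directly. There is no second Cayley polytope to pass to and no homeomorphism to build; one uses the \emph{same} dissection of the \emph{same} Cayley polytope, read off at a different height.

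Your gluing relies on the identity $C_k\cap C_l=\conv\bigl(\bigcup_j(R_{kj}\cap R_{lj})\times\{e_j\}\bigr)$, but being cells of a dissection does not imply this: dissections need not be face-to-face, so $C_k\cap C_l$ is only guaranteed to lie in $\partial C_k\cap\partial C_l$, not to be a common face, and there is no reason it should be of Cayley type. (Even in the face-to-face case the shared face is governed by the common \emph{vertices}, not by the set $R_{kj}\cap R_{lj}$.) Without that identity you have not shown the two exact decompositions $(p_j^{(k)})$ and $(p_j^{(l)})$ of a point in $C_k\cap C_l$ coincide, so the equality $\sum_j\lambda_jn_jp_j^{(k)}=\sum_j\lambda_jn_jp_j^{(l)}$ is unproved. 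If you want to keep a ``transport'' argument, avoid the piecewise map entirely: the global \emph{linear} isomorphism $(x,\lambda)\mapsto(x,\lambda_1/n_1,\dots,\lambda_r/n_r)$ on the Cayley \emph{cone} sends $\cone\Cay(P_1,\dots,P_r)$ and each $\cone C_k$ to their $\nn$-scaled counterparts with no gluing whatsoever; slicing back at $\sum_j\lambda_j=1$ then recovers the desired dissection.
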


With this, we can give the main result of this section.
\begin{thm}\label{thm:diff_zyl}
    Let $P_1,\dots,P_\Npolys, Q_1,\dots,Q_\Npolys \subset \R^d$ be polytopes
    with $P_i \subseteq Q_i$ for all $i=1,\dots,m$. Then
    \[
    \Cls{n_1Q_1 + \cdots + n_\Npolys Q_\Npolys} -
    \Cls{n_1P_1 + \cdots + n_\Npolys P_\Npolys} 
    \ = \ \sum_{\k \in \Z^\Npolys_{\ge0}} \zeta_\k \binom{n_1}{k_1} \cdots
    \binom{n_\Npolys}{k_\Npolys},
    \]
    where $\zeta_\k \in \Zyl_{|\k|}$.

\end{thm}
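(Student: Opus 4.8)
The plan is to reduce the difference of the two polynomials to a signed sum of classes of scaled half-open cylinders and then invoke Lemma~\ref{lem:HOprod_binom}. First I would choose, via the Cayley trick, a fine mixed dissection that is \emph{compatible} with the inclusions $P_i \subseteq Q_i$: concretely, apply the Cayley trick to the chain $P_i \subseteq Q_i$ (or, more carefully, triangulate $\Cay(Q_1,\dots,Q_\Npolys)$ in a way that restricts to a dissection of the subpolytope $\Cay(P_1,\dots,P_\Npolys)$, which one gets by picking a generic lifting that is already generic on the sub-Cayley-polytope). This yields a fine mixed dissection $Q_1+\cdots+Q_\Npolys = R_1 \cup \cdots \cup R_m$ whose restriction to $P_1+\cdots+P_\Npolys$ is a fine mixed dissection into those $R_\ell$ with $R_{\ell j}\subseteq P_j$ for all $j$. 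By the preceding Corollary, scaling by $\nn$ preserves both dissections.

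Next I would pass to half-open cylinders to make the inclusion--exclusion collapse into a disjoint union. Fix a generic direction $u$ parallel to $\aff(Q_1+\cdots+Q_\Npolys)$; by Lemma~\ref{lem:ho} (applied to $\Ihalf{u}{\cdot}$, using Proposition~\ref{prop:prod_half} so that each half-open cylinder piece is again a half-open cylinder of the same mixed type) we get
\[
    \Ihalf{u}{(n_1Q_1+\cdots+n_\Npolys Q_\Npolys)} \ = \ \biguplus_{\ell=1}^m \Ihalf{u}{(\nn R_\ell)}
\]
and likewise for the $P$-side using the subcollection of pieces contained in $\sum P_j$. Since $\Cls{\Ihalf{u}{X}} = \Cls{X}$ in $\PiL$ whenever $X = \Ihalf{u}{X}$ can be completed to a closed polytope with the same class — more precisely, since a half-open polytope and its closure have classes differing only by lower-dimensional half-open faces, and one checks as in Lemma~\ref{lem:Zyl_chain} that the class of the \emph{closed} scaled sum equals the class of the half-open one plus classes of lower cylinders, all of which land in the $\Zyl_{|\k|}$'s after expansion — the passage to half-open versions is harmless at the level of the final statement. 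Taking the difference, the pieces $R_\ell$ common to both dissections cancel, leaving
\[
    \Cls{\nn Q} - \Cls{\nn P} \ = \ \sum_{\ell : R_\ell \not\subseteq \sum P_j} \Cls{\nn R_\ell}\, ,
\]
a sum (with positive signs, after the half-open bookkeeping) of classes of scaled half-open cylinders $R_\ell = R_{\ell 1}+\cdots+R_{\ell\Npolys}$.

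Finally, for each surviving piece $R_\ell$, which is a half-open cylinder with $R_{\ell j}\subseteq Q_j$, Lemma~\ref{lem:HOprod_binom} gives
\[
    \Cls{n_1 R_{\ell 1}+\cdots+n_\Npolys R_{\ell\Npolys}} \ = \ \sum_{\k} \zeta^{(\ell)}_\k \binom{n_1}{k_1}\cdots\binom{n_\Npolys}{k_\Npolys}
\]
with $\zeta^{(\ell)}_\k \in \Zyl_{|\k|}$. Summing over $\ell$ and collecting coefficients of each $\binom{n_1}{k_1}\cdots\binom{n_\Npolys}{k_\Npolys}$ — which is legitimate by the uniqueness of coefficients in the binomial basis, Corollary~\ref{cor:zero}/\eqref{eqn:poly} — yields the claimed expansion with $\zeta_\k \in \Zyl_{|\k|}$. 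The main obstacle I anticipate is the second step: arranging the half-open structure so that (i) the pieces genuinely cancel between the $Q$- and $P$-dissections and (ii) the leftover terms come out with the correct (nonnegative) coefficients in $\Zyl_{|\k|}$ rather than as a mere formal difference. This requires choosing the Cayley triangulation and the direction $u$ simultaneously generically, and tracking the half-open faces carefully as in the proof of Lemma~\ref{lem:Zyl_chain}; the dimension/exactness bookkeeping that guarantees each piece is a cylinder of the right mixed type (so Lemma~\ref{lem:HOprod_binom} applies verbatim) is routine but must be done with care.
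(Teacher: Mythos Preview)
Your overall strategy---compatible fine mixed dissections via the Cayley trick, pass to half-open cells, then invoke Lemma~\ref{lem:HOprod_binom}---is exactly the paper's. Two points, however, are not yet under control.

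First, the half-open bookkeeping. You use $\Ihalf{u}{\cdot}$ for a generic direction $u$, and then have to argue that $\Cls{\nn Q}-\Cls{\Ihalf{u}{(\nn Q)}}$ and $\Cls{\nn P}-\Cls{\Ihalf{u}{(\nn P)}}$ either cancel or themselves land in the right $\Zyl$-cones. You flag this as the main obstacle, and indeed it does not come for free: the facets of $\nn Q$ and $\nn P$ visible from $u$ need not match up. The paper avoids this entirely by choosing a generic point $q\in\relint(P_1+\cdots+P_\Npolys)$ rather than a direction at infinity. Since $q$ lies in both $P$ and $Q$, one has $\half{q}{(\nn P)}=\nn P$ and $\half{q}{(\nn Q)}=\nn Q$, so nothing is removed from the totals; only the individual cells $R_\ell$ outside $P$ become properly half-open, and the cancellation
\[
    \Cls{\nn Q}-\Cls{\nn P}\ =\ \sum_{\ell:\,R_\ell\not\subseteq P}\Cls{\half{q}{(\nn R_\ell)}}
\]
is exact on the nose. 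This is the missing idea that makes your step~(ii) work cleanly.

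Second, the case $\dim(P_1+\cdots+P_\Npolys)<\dim(Q_1+\cdots+Q_\Npolys)$ is not addressed. In that situation $\Cay(P_1,\dots,P_\Npolys)$ sits in the boundary of $\Cay(Q_1,\dots,Q_\Npolys)$, so a fine mixed dissection of the $Q$-sum (into top-dimensional cells) cannot restrict to a dissection of the lower-dimensional $P$-sum, and there is no interior point $q$ to pick. The paper handles this by interpolating: one builds a chain $P_j=P_j^0\subseteq P_j^1\subseteq\cdots\subseteq P_j^s\subseteq Q_j$ so that each $\Cay(P_1^i,\dots,P_\Npolys^i)$ is a pyramid over the previous one and $P_1^s+\cdots+P_\Npolys^s$ already has full dimension; each step of the chain is then treated with the equal-dimension argument (choosing $q$ beyond the single new facet). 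Without this reduction your argument does not cover all pairs $P_i\subseteq Q_i$.
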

\begin{proof}
    Note that $\Cay(P_1,\dots,P_\Npolys ) \subseteq \Cay(Q_1,\dots,Q_\Npolys )$. Let us
    first assume that $Q = Q_1 + \cdots + Q_\Npolys $ and $P = P_1 + \cdots + P_\Npolys $
    are of the same dimension.  Using \emph{placing
    triangulations}~\cite[Sect.~4.3.1]{DLRS} and the Cayley trick,  we can
    thus find a fine mixed dissection
    \[
        Q \ = \  R_1 \cup \cdots \cup R_N
    \]
    such that 
    \[
        P \ = \  R_{M+1} \cup \cdots \cup R_N
    \]
    for some $M < N$. Let $q \in \relint(P)$ be a generic point. Then 
    \[
        Q \setminus P \ = \ 
        \half{q}{R_1} \uplus \cdots \uplus
        \half{q}{R_M}
    \]
    and hence 
    \[
        \Cls{\nn Q} - \Cls{\nn P} \ = \ \sum_{i=1}^M \Cls{\half{q}{\nn R_i}}.
    \]
    The result then follows from Lemma~\ref{lem:HOprod_binom}.
    
    Now assume that $s := \dim Q - \dim P > 0$. Then there are
    $\Lambda$-polytopes
    \begin{eqnarray*}
        P_1   & \ = \ &  P_1^0 \ \subseteq \ P_1^1\subseteq \cdots \subseteq
        P_1^s \ \subseteq \ Q_1\\
            \ &\vdots& \ \\
        P_\Npolys    & \ = \ &  P_\Npolys ^0 \ \subseteq \ P_\Npolys ^1\subseteq \cdots \subseteq
        P_\Npolys ^s \ \subseteq \ Q_\Npolys 
    \end{eqnarray*}
    such that $\Cay(P_1^i,\dots,P_\Npolys ^i)$ is a pyramid over
    $\Cay(P_1^{i-1},\dots,P_\Npolys ^{i-1})$ with apex $a^i \in (P_l^i\setminus
    P_l^{i-1}) \cap \LL$ for some $l$, and $P_1^s+\cdots +P_\Npolys ^s$ is of the
    same dimension as $Q$.  In particular,
    $\Cay(P_1^{i-1},\dots,P_\Npolys ^{i-1})\subset \Cay(P_1^i,\dots,P_\Npolys ^i)$ is a
    facet, and by the Cayley trick, also $P^i:=P_1^{i-1}+\cdots +P_\Npolys ^{i-1}
    \subset P_1^{i}+\cdots +P_\Npolys ^{i}$ is a facet. For a fine mixed dissection
    \[
        P_1^{i}+\cdots +P_\Npolys ^{i} \ = \  R_1 \cup \cdots \cup R_M
    \]
    we obtain 
    \[
        \Cls{\nn P^i} - \Cls{\nn P^{i-1}} \ = \ \sum_{i=1}^M \Cls{\half{q}{\nn
        R_i}}.
    \]
    by choosing a point $q$ beyond the facet $P_1^{i-1} + \cdots + P_\Npolys ^{i-1}$
    and beneath all others. Since $1\leq i\leq s$ was chosen arbitrarily,
    together with the first part, the proof follows.
\end{proof}

Within the framework developed in Section~\ref{ssec:cones}, we can finally
prove Theorem~\ref{thm:CM_weakly}.

\begin{proof}[Proof of Theorem~\ref{thm:CM_weakly}]
    Let $\phi : \PolL \rightarrow G$ be a weakly $h^*$-monotone valuation and
    let $P_1 \subseteq Q_1, \dots, P_\Npolys \subseteq Q_\Npolys$ be
    $\LL$-polytopes. 
    Then
    \[
        \CM\phi(Q_1,\dots,Q_\Npolys) - \CM\phi(P_1,\dots,P_\Npolys) \ = \
        \extPhi( \CM\Cls{ Q_1,\dots, Q_\Npolys} - \CM\Cls{ P_1,\dots,
        P_\Npolys})
    \]
    by Corollary~\ref{cor:CMcls}. The class $\CM\Cls{ Q_1,\dots, Q_\Npolys} -
    \CM\Cls{ P_1,\dots, P_\Npolys}$ is the coefficient $\zeta_{(1,\dots,1)}$ of
    $\binom{n_1}{1}\cdots\binom{n_\Npolys}{1}$ in Theorem~\ref{thm:diff_zyl}.
    By Lemma~\ref{lem:Zyl_chain}, we have $\zeta_{(1,\dots,1)} \in \WCM$ and
    hence ${\extPhi(\zeta_{(1,\dots,1)}) \ge 0}$.
\end{proof}

On the level of cones in the polytope algebra, Theorem~\ref{thm:CM_weakly}
states the following.

\begin{cor}\label{cor:mainCone}
    $
    \overline{\M} \ \subseteq \
        \Zyl_1 \ = \ \WCM.
    $
\end{cor}

\subsection{Lower bounds on combinatorial monotone valuations}
\label{ssec:bounds}
In the classical setting, there is an elementary geometric characterization
for the positivity of mixed volumes. The following result extends this
characterization to combinatorial mixed volumes.
\begin{thm}\label{thm:trivialitycond}
    Let $\phi  \colon \PolL \rightarrow G$ be a combinatorially positive
    $\LL$-valuation with $\phi(\{0\})>0$. Then for $\LL$-polytopes
    $P_1,\ldots, P_r$ the following are equivalent:
    \begin{enumerate}[\rm (i)]
        \item $\CM_r\phi(P_1,\dots,P_r)> 0$;
        \item  There exist linearly independent segments $S_1\subseteq
            P_1,\dots, S_r \subseteq P_r$  with vertices in $\LL$.
    \end{enumerate}
\end{thm}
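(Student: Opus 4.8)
The plan is to prove the two implications separately, working as much as possible in the polytope algebra $\PiL$ and then applying $\extPhi$.

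For the direction (ii) $\Rightarrow$ (i): suppose $S_i \subseteq P_i$ are linearly independent $\LL$-segments. First I would observe that $\CM$ is a valuation and monotone in each argument in an appropriate sense; more precisely, by Theorem~\ref{thm:CM_weakly} (combinatorial positivity implies weak $h^*$-monotonicity implies CM-monotonicity, noting that a combinatorially positive valuation has $\phi(\relint S) \ge 0$ and $\phi(\relint F) \ge 0$ for every simplex and facet, so the weak $h^*$-monotonicity hypothesis $\phi(\relint S) + \phi(\relint F) \ge 0$ holds) we get $\CM\phi(S_1,\dots,S_r) \le \CM\phi(P_1,\dots,P_r)$. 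So it suffices to show $\CM\phi(S_1,\dots,S_r) > 0$. Since the $S_i$ are linearly independent segments, $S := S_1 + \cdots + S_r$ is an exact Minkowski sum, i.e.\ an $r$-cylinder, and each $S_i$ is a $1$-simplex. Then in $\PiL$ we have $\CM\Cls{S_1,\dots,S_r} = \prod_{i=1}^r(\Cls{S_i}-1)$. Using the product structure of the polytope algebra on the direct sum of the linear spans (as in Lemma~\ref{lem:HOprod_binom}), $\prod_i(\Cls{S_i}-1) = \Cls{\HO{S}}$ where $\HO{S}$ is a properly half-open $r$-cylinder: each factor $\Cls{S_i}-1 = \Cls{S_i} - \Cls{\text{vertex}} = \Cls{\relint' S_i}$ is the class of $S_i$ with one endpoint removed. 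Hence $\CM\phi(S_1,\dots,S_r) = \extPhi(\Cls{\HO{S}}) = \phi(\HO{S})$. Now I need $\phi(\HO{S}) > 0$. Here I would use that $\HO{S}$ is a properly half-open $r$-cylinder and dissect it into properly half-open simplices via the staircase/Lemma~\ref{lem:Zyl_chain} machinery, so $\Cls{\HO{S}} = \sum_j \Cls{\HO{T}_j}$ with each $\HO{T}_j$ a properly half-open $r$-simplex; since $\phi$ is combinatorially positive, $\phi(\relint F) \ge 0$ for every face, so $\phi(\HO{T}_j) \ge 0$ for each $j$, and for at least one $j$ the open top-dimensional cell $\relint T_j$ appears, contributing $\phi(\relint T_j) \ge 0$ — but this only gives $\ge 0$. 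To get strict positivity I would instead argue: the $0$-dimensional strata. Actually the cleanest route: among the half-open pieces of $\HO S$, one of them is a properly half-open simplex whose set of removed faces is exactly one vertex less than a closed simplex in a way that leaves a single vertex $v$; more carefully, track the chain in Lemma~\ref{lem:Zyl_chain}: peeling a closed $r$-cylinder down to half-open simplices, one obtains among the summands exactly one copy of a single point $\{0\}$ when one does the \emph{closed} cylinder, but $\HO S$ is properly half-open so instead one should note $\Cls{\HO S} = \Cls{S} - (\text{lower strata})$ and use $\phi(\{0\}) > 0$ together with combinatorial positivity of all strata to conclude $\phi(\HO S) > 0$. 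The main obstacle is precisely this bookkeeping: showing that the hypothesis $\phi(\{0\})>0$ (rather than merely $\ge 0$) is exactly what is needed, by exhibiting a single lattice point whose contribution $\phi(\{0\})$ is not cancelled.

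For the direction (i) $\Rightarrow$ (ii), I would prove the contrapositive: if no linearly independent system of $\LL$-segments $S_i \subseteq P_i$ exists, then $\CM_r\phi(P_1,\dots,P_r) = 0$. The combinatorial condition "there is no linearly independent choice of $\LL$-segments" should be translated, via a matroid/transversal argument (Rado's theorem on transversals, or directly König/Hall), into the statement that there is a proper subspace $U \subseteq \R^d$, namely one spanned by $\LL$-vectors, and an index set $J$ with $|J| > \dim U$ such that every $\LL$-segment in $P_j$ for $j \in J$ lies in a translate of $U$ — equivalently, after translating, $\aff(P_j) \subseteq$ translate of $U$ for $j$ in some set, but more precisely the lattice-directions available in $\bigcup_{j \in J}$ span only a $(|J|-1)$-dimensional space. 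Then $\sum_{j \in J} P_j$, up to translation, lives in a subspace of dimension $< |J|$, so $\phi$ restricted to Minkowski combinations of those $P_j$ is a polynomial of degree $< |J|$ in the corresponding variables $n_j$ (by Theorem~\ref{thm:poly}), whence the mixed coefficient $\Diff_{j \in J}$ of $\prod_{j\in J}\binom{n_j}{1}$ already vanishes, forcing $\CM_r\phi = 0$ by the factorization of $\CM$ through the smaller mixed valuation. The subtlety here is handling the lattice case versus the vector-space case uniformly and making precise "the $\LL$-segments in $P_j$ span together only a low-dimensional space" — in the lattice case one works with the directions of edges with $\LL$-endpoints; I would reduce to: let $U$ be the linear span of all vectors $v-w$ with $v,w \in P_j \cap \LL$, $j \in J$; if the transversal fails then by Rado such a deficient set $J$ exists. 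The main obstacle in this direction is getting the combinatorial translation exactly right and verifying that "no linearly independent transversal of segments" is genuinely equivalent to such a dimension deficiency — this is where care with the definition of $\LL$-polytopes (vertices in $\LL$) is essential, since a segment $S_i \subseteq P_i$ with vertices in $\LL$ need not be an edge of $P_i$.

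Assembling the two directions gives the equivalence. I expect the (ii) $\Rightarrow$ (i) direction, and specifically the strict-positivity bookkeeping isolating the single uncancelled lattice point contributing $\phi(\{0\}) > 0$, to be the main technical hurdle; the (i) $\Rightarrow$ (ii) direction is essentially a dimension-count once the transversal/matroid translation is in place.
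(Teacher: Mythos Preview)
Your direction (ii) $\Rightarrow$ (i) starts exactly as the paper does: reduce by CM-monotonicity to linearly independent $\LL$-segments $S_1,\dots,S_r$ and identify $\CM\Cls{S_1,\dots,S_r}=\prod_i(\Cls{S_i}-1)$ with the class of the half-open parallelepiped $\HO S$. Where you then detour through a staircase dissection into half-open simplices and worry about ``strict-positivity bookkeeping'', the paper takes a one-line shortcut: the half-open parallelepiped decomposes directly as
\[
\HO S \;=\; \biguplus_{K\subseteq[r]} \relint S_K,
\qquad\text{hence}\qquad
\Cls{\HO S}\;=\;\sum_{K\subseteq[r]}\Cls{\relint S_K},
\]
so $\phi(\HO S)=\sum_K\phi(\relint S_K)\ge \phi(\relint S_\emptyset)=\phi(\{0\})>0$ by combinatorial positivity. (Each $S_K$ is a $\LL$-parallelepiped, but $\phi(\relint S_K)\ge 0$ follows by triangulating.) So there is no hurdle here; your plan works, but the simplex dissection is unnecessary.

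For (i) $\Rightarrow$ (ii) you take a genuinely different route. The paper argues structurally: by Theorem~\ref{thm:diff_zyl} one has $\CM\Cls{P_1,\dots,P_r}\in\Zyl_r$, and tracing the proofs of Lemma~\ref{lem:HOprod_binom} and Theorem~\ref{thm:diff_zyl} shows that the half-open $r$-cylinders occurring are of the form $T_1+\cdots+T_r$ with each $T_i\subset P_i$ a positive-dimensional simplex; if $\CM_r\phi>0$ at least one such cylinder must occur, and any edge $S_i\subseteq T_i$ does the job. Your contrapositive via Rado's transversal theorem is correct and more self-contained: if no independent system of $\LL$-segments exists, Rado gives $J\subseteq[r]$ with $\dim\operatorname{span}\{v-w:v,w\in P_j\cap\LL,\ j\in J\}<|J|$; then for any fixed $(n_j)_{j\notin J}$ the valuation $\phi^{+Q}$ applied to $\sum_{j\in J}n_jP_j$ has degree $<|J|$ by Theorem~\ref{thm:poly}, forcing the coefficient of $\prod_{j\in[r]}\binom{n_j}{1}$ to vanish. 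This avoids the fine-mixed-dissection machinery entirely, at the cost of importing a matroid result; the paper's route instead reuses the structural theorems it has already built.
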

\begin{proof}
    (ii) $\Longrightarrow$ (i): Since $\CM_r\phi$ is monotone in each argument
    we have 
    \[
        \CM_r\phi(P_1,\dots,P_r) \ \geq \ \CM_r\phi(S_1,\dots,S_r) \ = \ 
        \phi(\Ihalf{u}{(S_1+\dots +S_r)}),
    \]
    where the last equality holds by~\cite[Prop. 2.8]{HKST} for any general $u
    \in \R^d$. Observe that
    \[
        \Cls{\Ihalf{u}{(S_1+\dots +S_r)}} \ = \  \sum _{I\subseteq
        [r]}\Cls{\relint{\left(S_I\right)}},
    \]
    and therefore
    \[
        \CM_r\phi(P_1,\dots,P_r) \ \geq \ \sum_{I\subseteq 
        [r]}\phi(\relint S_I ) \ \geq \  \phi(\{0\}) \ > \ 0.
    \]
    (i) $\Longrightarrow$ (ii): By Corollary~\ref{cor:CMcls} and
    Theorem~\ref{thm:diff_zyl}, $\CM\Cls{P_1,\dots,P_r} \in \Zyl_r$. From the
    assumption $\CM_r\phi(P_1,\dots,P_r)> 0$, it follows from the proofs of
Lemma \ref{lem:HOprod_binom} and Theorem \ref{thm:diff_zyl} that there has to be
    at least one $r$-cylinder $T = T_1 + \cdots +
    T_r$, where $T_i \subset P_i$ is positive dimensional for $i=1,\dots,r$.
    Thus, choosing $S_i \subseteq T_i$ to be an edge for $i = 1,\dots,r$ proves
    the claim.
\end{proof}

We cannot weaken the assumption of Theorem~\ref{thm:trivialitycond} and assume
that $\phi$ is only \emph{weakly} $h^*$-monotone. To see this, consider the
dissection of the half-open $2$-cylinder $[0,1)^2$ into half-open simplices
\[
    [0,1)^2  \ = \ \{x\in [0,1)^2\colon x_1<x_2\} \uplus \{x\in [0,1)^2\colon
    x_1>x_2\} \uplus \{x\in [0,1)^2\colon x_1=x_2\}.
\]
Using Theorem~7.4 in~\cite{JS15}, we can manufacture a weakly $h^*$-monotone
$\Z^d$-valuation $\phi$ such that $\phi(\{0\})>0$ and $\phi([0,1)^2) = 0$.

By way of Theorem~\ref{thm:trivialitycond}, combinatorial positivity in
particular implies that the computational task of deciding whether
$\CM\phi(P_1,\dots,P_\Npolys) > 0$ can be decided in polynomial time. For the
mixed volume, this was achieved by Dyer, Gritzmann, and
Hufnagel~\cite[Theorem~8]{DGH}. In fact, the proof of this result uses the
fact that the mixed volume is monotone and positive if there are sufficiently
many linearly independent segments. This can be phrased as a \emph{matroid
intersection} problem of two matroids, which can be solved in polynomial time.
The proof of Theorem 8 in~\cite{DGH} carries over to our case and shows the
following.
\begin{cor}\label{cor:complex}
    Let $\phi$ be a combinatorial positive $\LL$-valuation with $\phi(\{0\})
    >0$ and $P_1,\dots,P_\Npolys$ well-presented $\LL$-polytopes. Then there
    is a polynomial time algorithm that decides whether
    \[
        \CM\phi(P_1,\dots,P_\Npolys) \ = \ 0.
    \]
\end{cor}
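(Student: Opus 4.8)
The plan is to use Theorem~\ref{thm:trivialitycond} to reduce the decision problem ``is $\CM\phi(P_1,\dots,P_\Npolys)=0$?'' to a purely combinatorial feasibility question about the polytopes $P_1,\dots,P_\Npolys$ alone, and then to recognize that question as the matroid-intersection problem solved in~\cite[Thm.~8]{DGH}. First dispose of the trivial cases: if $\Npolys>d$ then $\CM_\Npolys\phi\equiv0$ by Corollary~\ref{cor:zero}, and if $\Npolys=0$ then $\CM_0\phi=\phi(\{0\})>0$; so assume $1\le\Npolys\le d$. Since $\phi$ is combinatorially positive it is in particular weakly $h^*$-monotone, hence CM-monotone by Theorem~\ref{thm:CM_weakly}; applying CM-monotonicity with the smaller polytopes $\{0\}\subseteq P_i$ gives $\CM_\Npolys\phi(P_1,\dots,P_\Npolys)\ge\CM_\Npolys\phi(\{0\},\dots,\{0\})$, and the latter vanishes by~\eqref{eqn:CMcls} and Corollary~\ref{cor:CMcls} because every factor $\Cls{0}-1$ is zero. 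Therefore $\CM\phi(P_1,\dots,P_\Npolys)=0$ precisely when condition~(i) of Theorem~\ref{thm:trivialitycond} fails, i.e.\ precisely when there do \emph{not} exist linearly independent $\LL$-segments $S_1\subseteq P_1,\dots,S_\Npolys\subseteq P_\Npolys$. This condition does not involve $\phi$, so the algorithm reads only the presentations of the $P_i$.

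Next I would phrase the segment condition linear-algebraically. By convexity the segment joining any two points of $P_i\cap\LL$ lies in $P_i$, so the directions of $\LL$-segments contained in $P_i$ are exactly the nonzero vectors of $V_i:=(P_i\cap\LL)-(P_i\cap\LL)$; since the vertices of $P_i$ lie in $\LL$ and affinely span $\aff(P_i)$, the set $V_i$ spans the linear subspace $L_i:=\aff(P_i)-\aff(P_i)$. A short exchange argument --- in a linearly dependent system of representatives some member can be replaced by a spanning-set vector lying outside the span of the others --- shows that linearly independent $v_i\in V_i$ exist if and only if linearly independent $v_i\in L_i$ exist, equivalently if and only if the subspaces $L_1,\dots,L_\Npolys$ admit a linearly independent system of representatives. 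In particular it suffices to work with a polynomially sized set of $\LL$-vectors spanning $L_i$, for instance the vertex differences of $P_i$; this is where ``well-presented'' is used, to guarantee that such a spanning set, and hence the matroid instance below, has polynomially bounded bit-size and can be computed in polynomial time from the presentation.

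Finally, deciding whether subspaces given by spanning sets admit a linearly independent system of representatives is a matroid-intersection problem: assemble the matrix $A$ whose columns are the chosen $\LL$-vectors spanning $L_1,\dots,L_\Npolys$, partitioned into blocks $B_1,\dots,B_\Npolys$ with one block per polytope, and ask whether the linear matroid of $A$ and the partition matroid with parts $B_1,\dots,B_\Npolys$ have a common independent set of size $\Npolys$ (equivalently, whether $\dim\sum_{i\in I}L_i\ge|I|$ for all $I\subseteq[\Npolys]$). By the matroid-intersection algorithm --- see~\cite{DGH} and the references therein --- this is decidable in time polynomial in the size of $A$; if the maximum common independent set has size $<\Npolys$ we output ``$\CM\phi(P_1,\dots,P_\Npolys)=0$'' and otherwise ``$\CM\phi(P_1,\dots,P_\Npolys)>0$''. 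This is exactly the algorithm behind~\cite[Thm.~8]{DGH}, now run with a general $\Npolys\le d$ in place of $\Npolys=d$. The only genuine work beyond invoking~\cite{DGH} is the reduction of the second paragraph --- extracting a polynomial-size $\LL$-spanning set of each $L_i$ from the given presentation and checking that restricting the segment search to it preserves feasibility --- since the monotonicity and the geometry that drive the argument are already packaged into Theorem~\ref{thm:trivialitycond}. That reduction is the main obstacle; everything downstream is off-the-shelf combinatorial optimization.
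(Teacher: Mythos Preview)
Your proposal is correct and follows exactly the approach the paper takes: reduce via Theorem~\ref{thm:trivialitycond} to the existence of linearly independent $\LL$-segments, then recognize this as the matroid-intersection instance of~\cite[Thm.~8]{DGH}. In fact you supply substantially more detail than the paper, which simply asserts that the proof of~\cite[Thm.~8]{DGH} carries over; your explicit identification of the linear and partition matroids, and the reduction from $V_i$ to the spanning subspaces $L_i$, are a faithful unpacking of that assertion.
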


For the discrete volume, we can give stronger lower bounds on $\CM\dvol$.
\begin{cor}\label{cor:prod_simp}
    Let $P_1,\dots,P_r \in \Pol{\Z^d}$ be lattice polytopes. For any choice of
    lattice simplices $S_i \subset P_i$ for $i=1,\dots,r$ such that $S_1 +
    \cdots + S_r$ is an $r$-cylinder, we have
    \[
        \CM_r\dvol(P_1,\dots,P_r) \ \geq  \ \dim S_1 \cdots \dim S_r.
    \]
\end{cor}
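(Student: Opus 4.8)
The plan is to reduce to the case $P_i = S_i$ by monotonicity and then to evaluate $\CM_r\dvol(S_1,\dots,S_r)$ inside the polytope algebra $\Pi(\Z^d)$, isolating a contribution of $\dim S_1\cdots\dim S_r$ coming from the vertices of the simplices and discarding everything else as manifestly nonnegative.

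\emph{Reduction.} Since $\dvol$ is combinatorially positive it is in particular weakly $h^*$-monotone, so Theorem~\ref{thm:CM_weakly} applies and $\dvol$ is CM-monotone; equivalently, $\CM\dvol$ is monotone (Corollary~\ref{cor:Bihan}). Specializing the CM-monotonicity inequality to the situation where only one argument is enlarged shows that $\CM_r\dvol$ is monotone in each of its arguments separately. Hence $S_i\subseteq P_i$ for all $i$ gives $\CM_r\dvol(P_1,\dots,P_r)\ge\CM_r\dvol(S_1,\dots,S_r)$, and it remains to prove $\CM_r\dvol(S_1,\dots,S_r)\ge\prod_{i=1}^r\dim S_i$.

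\emph{Computation in $\Pi(\Z^d)$.} By Corollary~\ref{cor:CMcls}, $\CM_r\dvol(S_1,\dots,S_r)=\extPhi\big(\prod_{i=1}^r(\Cls{S_i}-1)\big)$, where $\extPhi\colon\Pi(\Z^d)\to\Z$ is the homomorphism induced by $\dvol$. Set $d_i:=\dim S_i$; since $S_1+\cdots+S_r$ is an $r$-cylinder we have $d_i\ge1$ for all $i$. Using $\Cls{S_i}=\sum_F\Cls{\relint F}$ (sum over all faces $F$ of $S_i$) and the fact that each of the $d_i+1$ vertices of $S_i$ contributes $\Cls{\relint F}=\Cls{0}$ by translation-invariance, one gets
\[
\Cls{S_i}-1\ =\ d_i\cdot\Cls{0}\ +\ \sum_{\substack{F\ \text{face of}\ S_i\\ \dim F\ge1}}\Cls{\relint F}.
\]
Now expand the product $\prod_{i=1}^r(\Cls{S_i}-1)$ using this expression. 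Because $S_1+\cdots+S_r$ is a cylinder, for any faces $F_i\subseteq S_i$ the Minkowski sum $\sum_{i\in J}F_i$ is exact, and $\Cls{\relint(\cdot)}$ is multiplicative across exact Minkowski sums, so $\prod_{i\in J}\Cls{\relint F_i}=\Cls{\relint(\sum_{i\in J}F_i)}$. Consequently every term of the expansion has the form $\big(\prod_{i\notin J}d_i\big)\Cls{\relint(\sum_{i\in J}F_i)}$ for some $J\subseteq[r]$ and positive-dimensional faces $F_i\subseteq S_i$, $i\in J$. Applying $\extPhi=\dvol$, each such term is nonnegative because $\dvol(\relint(\cdot))$ counts interior lattice points and the $d_i$ are positive, while the single term with $J=\emptyset$ equals $\prod_{i}d_i\cdot\dvol(\{0\})=\prod_i d_i$. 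Summing, $\CM_r\dvol(S_1,\dots,S_r)\ge\prod_{i=1}^r d_i=\dim S_1\cdots\dim S_r$, which together with the reduction step finishes the proof.

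\emph{Main point to check.} The only step needing care is the multiplicativity $\Cls{\relint A}\cdot\Cls{\relint B}=\Cls{\relint(A+B)}$ for an exact Minkowski sum, and more generally that expanding $\prod_i(\Cls{S_i}-1)$ keeps one inside the cone spanned by relative-interior classes of faces of the cylinder. This follows by expanding both sides via the inclusion--exclusion formula $\Cls{\relint P}=\sum_F(-1)^{\dim P-\dim F}\Cls F$ and using that in an exact sum $A+B$ every face is uniquely $G+H$ with $G\subseteq A$, $H\subseteq B$ faces and $\dim(G+H)=\dim G+\dim H$; the resulting term-by-term matching is precisely what the $r$-cylinder hypothesis supplies, and this is the only place that hypothesis (beyond the inclusions $S_i\subseteq P_i$ used in the reduction) enters.
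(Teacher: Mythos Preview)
Your proof is correct, and it takes a genuinely different route from the paper's. Both arguments start with the CM-monotonicity reduction to $\CM_r\dvol(S_1,\dots,S_r)$, but then diverge. The paper invokes Proposition~2.8 of~\cite{HKST} to rewrite $\CM_r\dvol(S_1,\dots,S_r)=\dvol\bigl(S\setminus\bigcup_{I\subsetneq[r]}S_I\bigr)$, using that each $S_I$ is a face of the cylinder $S$, and then notes that the $\prod_i\dim S_i$ lattice points $\sum_i v_{i,j_i}$ with each $v_{i,j_i}$ a nonzero vertex of $S_i$ lie in that set (this is where ``every lattice polytope of dimension $l$ has at least $l+1$ lattice points'' enters). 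You instead stay inside $\Pi(\Z^d)$: you split $\Cls{S_i}-1=d_i\cdot 1+\sum_{\dim F\ge1}\Cls{\relint F}$, expand the product, and isolate $\prod_i d_i$ as the unique term with $J=\emptyset$, discarding the rest via combinatorial positivity of $\dvol$ together with the multiplicativity $\prod_{i\in J}\Cls{\relint F_i}=\Cls{\relint(\sum_{i\in J}F_i)}$ for exact sums. That multiplicativity is indeed the one point needing care, and your justification (faces of an exact sum are uniquely $G+H$ with additive dimensions) is correct. Your argument is self-contained within the polytope-algebra framework of Section~\ref{sec:CM_PiL} and avoids the external reference; the paper's version is shorter once that reference is available and gives a concrete geometric description of the lattice points witnessing the bound.
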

\begin{proof}
    By monotonicity $\CM_r\dvol(P_1,\dots,P_r) \geq
    \CM_r\dvol(S_1,\dots,S_r)$.  Since $S = S_1+\cdots + S_r$ is exact and
    therefore affinely isomorphic to $S_1 \times \cdots \times S_r$, $S_I$
    is a face of $S$ for all $I$. Proposition~2.8 in~\cite{HKST} then yields
    \[
        \CM_k\dvol(S_1,\dots,S_\Npolys) \ = \ \dvol\left(S\setminus
        \bigcup_{I\subset [\Npolys]}S_I\right) \ \geq \ \dim S_1 \cdots \dim
        S_\Npolys,
    \] 
    since every lattice polytope of dimension $l$ has at least $l+1$ lattice
    points.
\end{proof}

\subsection{Discrete volume and Cayley cones}
\label{ssec:cayley}

In this section, we briefly outline an alternative proof of
Corollary~\ref{cor:Bihan} that does not make use of the polytope algebra.  For
given (lattice) polytopes $P_1,\dots,P_r \subset \R^d$, define the
\Defn{Cayley cone} 
\begin{equation}\label{eqn:cayley}
    C \ = \ \{ (p,\l) \in \R^d \times
    \R^r : \l\ge 0, p \in \l_1 P_1 + \cdots + \l_r P_r
    \}\,,
\end{equation}
i.e., $C = \cone (\Cay(P_1,\dots,P_r))$. As
for the Cayley polytope, we observe that for fixed $ n_1,\dots,n_r \in \Znn$
\[
    n_1 P_1 + \cdots + n_r P_r \ = \ \{ p \in \R^d : (p,n_1,\dots,n_r) \in 
    C  \}.
\]
The \Defn{integer-point transform} of a set $S \subseteq \R^d \times \R^r$
is the formal Laurent series
\[
    \sigma_S(\x,\y) \ := \ \sum_{(p,\l) \in S \cap (\Z^d \times \Z^r)} \x^p
    \y^\l\,.
\]
Hence, the integer-point transform $\sigma_C(\x,\y)$ is the sum of all Laurent
monomials $\x^py_1^{n_1}\cdots y_r^{n_r}$ where $n_1,\dots,n_r \in \Znn$ and
$p$ is a lattice point of $n_1P_1 + \cdots + n_r P_r$. To compute
$\sigma_C(\x,\y)$, let $C = S_1 \cup \cdots \cup S_m$ be a dissection of $C$
into simplicial cones such that the generators of $S_k$ are among the
generators of $C$ for each $k$.  For a point $q$ in the interior of $C$ and
generic with respect to all $S_k$, the half-open decomposition of $C$ with
respect to $q$ yields
\[
    \sigma_C(\x,\y) \ = \ \sigma_{\half{q}{S_1}}(\x,\y) + \cdots +
    \sigma_{\half{q}{S_m}}(\x,\y).
\]
Computing integer-point transforms of half-open simplicial cones is quite
straightforward (and explicitly done in~\cite[Section~4.6]{crt}): For every
$1 \le k \le m$ there is a bounded set $\mathcal{P}_k \subset \Z^d \times
\Z^r$, points $v_{k,1}, \dots, v_{k,d+r} 
\in \Z^d$, and $1 \le j_{k,1} \le \cdots \le j_{k,d+r} \le r$ such that 
\[
    \sigma_{\half{q}{S_h}}(\x,\y) \ = \
    \frac{\sigma_{\mathcal{P}_i}(\x,\y)}{ (1-\x^{v_{k,1}}y_{j_{k,1}})
    \cdots (1-\x^{v_{k,d+r}}y_{j_{k,d+r}}) }\,.
\]
In particular, setting $\x = \1$
\begin{equation}\label{eqn:multiH}
    \sum_{\nn \in \Znn^r} E(n_1P_1 + \cdots + n_rP_r) y_1^{n_1}\cdots
    y_r^{n_r} \ = \ 
    \sum_{h=1}^m
    \frac{\sigma_{\mathcal{P}_h}(\1,\y)}
    {(1-y_1)^{d_{h,1}+1} \cdots (1-y_r)^{d_{h,r}+1} }
\end{equation}
then shows that $E(n_1P_1 + \cdots + n_rP_r)$ agrees with a polynomial.  

Since
each $S_k$ is a simplicial cone of full dimension $d+r$, it follows that $0
\le d_{k,i} \le \dim P_k$ and $\sigma_{\mathcal{P}_k}(\1,\y)$ is of degree
$\le d_{k,i}$ in $y_i$. Unravelling the right-hand side
of~\eqref{eqn:multiH} shows that $E(n_1P_1 + \cdots + n_rP_r)$ is a 
nonnegative\footnote{Note that the coefficients of
$\sigma_{\mathcal{P}_h}(\1,\y)$ are clearly nonnegative. However, writing the
right-hand side of~\eqref{eqn:multiH} with denominator $ (1-y_1)^{\dim
P_1+1}\cdots (1-y_1)^{\dim P_r+1}$, the coefficients of the numerator are not
necessarily nonnegative.}
linear combination of the polynomials
\[
\binom{n_1 + d_{k,1} -
j_{k,1}}{d_{k,1}} 
\binom{n_2 + d_{k,2} -
j_{k,2}}{d_{k,2}} 
\cdots \binom{n_r + d_{k,r} - j_{k,r}}{d_{k,r}}
\]
with $0\le j_i \le d_{k,i}$ for all for $k=1,\dots,m$ and $i = 1,\dots,r$. 
Rewriting this in the form of Theorem~\ref{thm:repr} proves that
\[
    \CM_rE(P_1,\dots,P_r) \ = \ \sum_{I \subseteq [r]} (-1)^{r-|I|} E(P_I) \
    \ge \ 0.
\]
To prove monotonicity, one sees that $C$ is a subcone of $C' =
\cone(\Cay(Q_1,\dots,Q_r))$ for lattice polytopes $Q_i \supseteq P_i$. The
same ideas as in the proof of Theorem~\ref{thm:diff_zyl} show that 
$\sigma_{C'}(\x,\y) - \sigma_{C}(\x,\y)$ is a sum of integer-point
transforms of half-open simplicial cones and the above argument yields the
claim.

\section{Monotonicity and CM-monotonicity}
\label{sec:monotone}

Let $\phi : \PolL \rightarrow G$ be a $\LL$-valuation. If $\phi$ is
CM-monotone, then in particular
\[
    \CM_1\phi(Q) \ = \ \phi(Q) - \phi(\{0\}) \ \ge \ \phi(P) - \phi(\{0\})
    \ = \ \CM_1\phi(P),
\]
for any two $\LL$-polytopes $P \subseteq Q$. Hence CM-monotonicity implies
monotonicity. In this section we explore the converse implication. As
in~\cite{JS15}, there is a fundamental difference between the cases $\LL =
\R^d$ and $\LL = \Z^d$. To highlight the difference, we put the following
well-known lemma on record.  Let $\Konv_d$ denote the family of convex bodies
in $\R^d$.
\begin{lem}\label{lem:Kext}
    Let $\phi : \Pol{\R^d} \rightarrow \R$ be a monotone $\R^d$-valuation. Then
    $\phi$ can be extended to a monotone and translation-invariant valuation
    on $\Konv_d$.
\end{lem}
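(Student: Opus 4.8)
The statement to prove is Lemma~\ref{lem:Kext}: a monotone $\R^d$-valuation $\phi : \Pol{\R^d} \rightarrow \R$ extends to a monotone, translation-invariant valuation on $\Konv_d$. The plan is to use monotonicity as a source of uniform continuity with respect to the Hausdorff metric and then invoke density of polytopes in $\Konv_d$. First I would fix a large ball $B = B(0,R)$ and observe that for any polytopes $P \subseteq Q$ contained in $B$ we have $\phi(P) \le \phi(Q) \le \phi(B')$ where $B'$ is any fixed polytope containing $B$; together with monotonicity from below this shows $\phi$ is \emph{bounded} on the polytopes inside $B$. The key step is to upgrade boundedness to uniform continuity: if $P, Q \subseteq B$ are polytopes with $\delta := d_H(P,Q)$ small, then $Q \subseteq P + \delta B_1$ and $P \subseteq Q + \delta B_1$ (with $B_1$ the unit ball, which I may replace by a fixed polytope $\Delta \supseteq B_1$ to stay inside $\Pol{\R^d}$), so monotonicity gives $\phi(P) - \phi(Q + \delta\Delta) \le 0$ and symmetrically. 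Using Theorem~\ref{thm:poly} (or rather McMullen's polynomiality, already recalled in the text) applied to $\phi^{+Q}(\,\cdot\,)$, the quantity $\phi(Q + t\Delta) - \phi(Q)$ is a polynomial in $t$ vanishing at $t=0$, hence is $O(t)$ with a constant controlled uniformly over $Q \subseteq B$ — this is the heart of the estimate and the main obstacle, since one must check the polynomial coefficients stay bounded as $Q$ varies, which follows from the boundedness established in the first step together with finite-difference formulas for the coefficients.

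Granting the uniform continuity, the extension is routine: polytopes are dense in $\Konv_d$ in the Hausdorff metric (e.g.\ by taking convex hulls of finite $\varepsilon$-nets), so for $K \in \Konv_d$ pick any sequence of polytopes $P_k \to K$ and set $\extPhi(K) := \lim_k \phi(P_k)$; uniform continuity on each ball makes the limit exist and be independent of the approximating sequence. Monotonicity of $\extPhi$ is immediate from monotonicity of $\phi$ by passing to limits (if $K \subseteq L$, approximate $K$ by $P_k$ and $L$ by $Q_k$ with $P_k \subseteq Q_k$, which one can arrange). Translation invariance passes to the limit the same way. Finally, the valuation property extends from $\Pol{\R^d}$ to $\Konv_d$: for $K, L \in \Konv_d$ with $K \cup L$ convex, one approximates $K$ and $L$ simultaneously by polytopes $P_k, Q_k$ with $P_k \cup Q_k$ and $P_k \cap Q_k$ polytopes converging to $K\cup L$ and $K \cap L$ respectively (a standard construction, e.g.\ intersecting a common fine polytopal subdivision, or citing the analogous argument in McMullen's work), and then takes limits in the inclusion–exclusion identity \eqref{eqn:val}. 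Thus $\extPhi$ is a monotone, translation-invariant valuation on $\Konv_d$ extending $\phi$.

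I would remark that the only place the scalar field being $\R$ (rather than $\Z$) is used is in the density of $\Pol{\R^d}$ in $\Konv_d$ and in being able to shrink/translate by arbitrarily small amounts while staying inside $\Pol{\R^d}$; this is precisely why the lemma has no lattice analogue, matching the comment in the text that there is a fundamental difference between $\LL = \R^d$ and $\LL = \Z^d$.
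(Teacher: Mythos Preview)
The paper does not prove this lemma: it is introduced with ``we put the following well-known lemma on record'' and left without argument, so there is nothing on the paper's side to compare your approach against.

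Your plan is the standard route and is essentially sound. The one place that needs more care is the sentence ``$\phi(Q + t\Delta) - \phi(Q)$ is a polynomial in $t$'': Theorem~\ref{thm:poly} only yields polynomiality for nonnegative \emph{integer} arguments, whereas you need real $\delta \to 0$. The fix is routine but should be stated: apply Theorem~\ref{thm:poly} with $\tfrac{1}{m}\Delta$ in place of $\Delta$ to see that $\phi(Q+q\Delta)$ agrees with the polynomial $p_Q$ at every rational $q\ge 0$, and then use that $t\mapsto \phi(Q+t\Delta)$ is nondecreasing (take $0\in\Delta$) to squeeze to all real $t$; your finite-difference bound on the coefficients then gives the uniform $O(\delta)$ estimate. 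For the passage of the valuation identity to $\Konv_d$, note that $P_k\cap Q_k \to K\cap L$ in the Hausdorff metric is \emph{not} automatic from $P_k\to K$, $Q_k\to L$, so ``a standard construction'' hides real work; the cleanest way to close this step is to cite Groemer's extension theorem for continuous valuations rather than build the approximating pairs by hand.
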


It was shown in~\cite[Prop.~5.4]{JS15} that every weakly $h^*$-monotone
valuation is monotone and Corollary~\ref{cor:mainCone} suggests that
CM-monotone and weakly $h^*$-monotone valuations may coincide. This, however, is
not true for $\LL = \R^d$; see Corollary~\ref{cor:not_weak} below.

The classical mixed volumes play a central role in Hadwiger's characterization
of continuous and rigid motion invariant valuations on convex bodies. We can
use specializations of mixed volumes to manufacture CM-monotone valuations.
\begin{prop}\label{prop:MV_CMmono}
    Let $Q_1,\dots,Q_{d-\Npolys} \subset \R^d$ be nonempty polytopes for $0 \le \Npolys \le
    d$. Then 
    \[
        P \ \mapsto \ \phi(P) \ := \ \MV_d(P^{\Npolys},Q_1,...,Q_{d-\Npolys})
    \]
    is a CM-monotone $\R^d$-valuation.
\end{prop}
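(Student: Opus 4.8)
The plan is to reduce CM-monotonicity of $\phi$ to the classical monotonicity~\eqref{eqn:MV_mon} of the mixed volume, by writing each $\CM_s\phi(P_1,\dots,P_s)$ as an explicit \emph{nonnegative} combination of ordinary mixed volumes.

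First I would check that $\phi$ really is a translation-invariant $\R^d$-valuation. This is classical: for fixed nonempty convex bodies $Q_1,\dots,Q_{d-r}$ the mixed volume is a translation-invariant valuation in each of its remaining arguments, hence so is the ``diagonal'' $P\mapsto\MV_d(P^r,Q_1,\dots,Q_{d-r})$; see~\cite[Ch.~5]{SchneiderNew}. (Alternatively, up to a positive normalizing constant $\phi(P)$ is the coefficient of $\lambda_0^r\lambda_1\cdots\lambda_{d-r}$ in the polynomial $\vol_d(\lambda_0P+\lambda_1Q_1+\cdots+\lambda_{d-r}Q_{d-r})$, hence a finite $\Q$-linear combination of the valuations $P\mapsto\vol_d(k_0P+k_1Q_1+\cdots+k_{d-r}Q_{d-r})$, $k\in\Znn^{d-r+1}$.)

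Next I would compute $\CM_s\phi$. Fix $\R^d$-polytopes $P_1,\dots,P_s$ and abbreviate $Q_\bullet=(Q_1,\dots,Q_{d-r})$. Using that $\MV_d$ is symmetric, Minkowski additive, and positively homogeneous of degree $1$ in each argument, expanding each of the $r$ copies of $n_1P_1+\cdots+n_sP_s$ yields
\[
    \phi(n_1P_1+\cdots+n_sP_s)\;=\;\sum_{\substack{c\in\Znn^s\\ |c|=r}}\binom{r}{c_1,\dots,c_s}\,n_1^{c_1}\cdots n_s^{c_s}\,\MV_d(P_1^{c_1},\dots,P_s^{c_s},Q_\bullet).
\]
By Corollary~\ref{cor:zero} (equivalently, by the finite-difference description of $\CM$ recorded before Theorem~\ref{thm:repr}), $\CM_s\phi(P_1,\dots,P_s)$ is the coefficient of $\binom{n_1}{1}\cdots\binom{n_s}{1}$ in this polynomial, i.e.\ $\Diff_1\cdots\Diff_s$ of it evaluated at $\0$. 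Since the constant term of $\Diff_i(n_i^{c})=(n_i+1)^{c}-n_i^{c}$ is $1$ for $c\ge1$ and $0$ for $c=0$, only compositions with all parts positive survive, giving
\[
    \CM_s\phi(P_1,\dots,P_s)\;=\;\sum_{\substack{c\in\Z_{\ge1}^s\\ |c|=r}}\binom{r}{c_1,\dots,c_s}\,\MV_d(P_1^{c_1},\dots,P_s^{c_s},Q_\bullet);
\]
in particular $\CM_s\phi\equiv0$ for $s>r$. (One could instead obtain this formula by verifying the recursion of Corollary~\ref{cor:charac}, using Minkowski additivity of $\MV_d$ in the merged argument $P_1+P_2$, but the finite-difference route seems cleanest.)

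Finally, monotonicity drops out: if $P_i\subseteq P_i'$ for $i=1,\dots,s$, then for every composition $c$ the tuple $(P_1^{c_1},\dots,P_s^{c_s},Q_\bullet)$ is entrywise contained in $((P_1')^{c_1},\dots,(P_s')^{c_s},Q_\bullet)$, so~\eqref{eqn:MV_mon} gives $\MV_d(P_1^{c_1},\dots,P_s^{c_s},Q_\bullet)\le\MV_d((P_1')^{c_1},\dots,(P_s')^{c_s},Q_\bullet)$; multiplying by the nonnegative multinomial coefficients and summing gives $\CM_s\phi(P_1,\dots,P_s)\le\CM_s\phi(P_1',\dots,P_s')$ for all $s$, i.e.\ $\phi$ is CM-monotone. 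I do not anticipate a genuine obstacle here — all the geometric content is carried by the off-the-shelf monotonicity of $\MV_d$ — the only point requiring a little care being the finite-difference bookkeeping that yields the closed form for $\CM_s\phi$ (notably the vanishing for $s>r$, which does not follow from Corollary~\ref{cor:zero} alone when $r<d$).
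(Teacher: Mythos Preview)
Your proof is correct and follows essentially the same route as the paper: expand $\phi(n_1P_1+\cdots+n_sP_s)$ in the monomial basis via multilinearity of $\MV_d$, then observe that $\CM_s\phi$ is a nonnegative combination of the monotone coefficients $\MV_d(P_1^{c_1},\dots,P_s^{c_s},Q_\bullet)$. The only difference is that the paper cites \cite[Thm.~2.2]{HKST} for the passage from monomial to binomial coefficients, whereas you compute the finite difference $\Diff_1\cdots\Diff_s(n_1^{c_1}\cdots n_s^{c_s})\big|_{\0}$ directly; your explicit formula is in fact the one the paper later records (in the proof of Theorem~\ref{thm:Rd-mono}) as a consequence of that citation.
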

\begin{proof}
    Using the multilinearity of the mixed volume we obtain
    \[
        \phi(n_1P_1 + \cdots + n_\Npolys P_\Npolys ) \ = \ \sum _{i_1,\ldots,
        i_\Npolys=1}^\Npolys \MV_d(P_{i_1}, \dots, P_{i_\Npolys},Q_1,\dots, Q_{d-\Npolys})n_{i_1}\cdots
        n_{i_\Npolys}.
    \]
    Therefore, the coefficients of $\phi_{P_1,\ldots, P_\Npolys}$ in the
    standard monomial basis are monotone valuations in each argument $P_i$.
    By~\cite[Thm~2.2]{HKST}, $\CM_k\phi$ is a nonnegative linear combination
    of these coefficients for all $k$ and, as such, monotone as well.
\end{proof}

Together with Hadwiger's characterization theorem
(cf.~\cite[Thm.~6.4.14]{SchneiderNew}), we get a characterization of
CM-monotone 
rigid-motion invariant $\R^d$-valuations.
\begin{cor}\label{cor:rigidmotionCM}
    A rigid-motion invariant valuation $\phi : \Pol{\R^d} \rightarrow \R$ is
    monotone if and only if it is CM-monotone. In particular, all intrinsic
    volumes (or quermassintegrals) are CM-monotone.
\end{cor}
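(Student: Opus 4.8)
The plan is to combine the characterization theorem of Hadwiger with Proposition~\ref{prop:MV_CMmono}, using the fact that CM-monotonicity always implies monotonicity (observed at the start of Section~\ref{sec:monotone}). One direction is immediate: if $\phi$ is CM-monotone then $\CM_1\phi(Q) - \CM_1\phi(P) = \phi(Q) - \phi(P) \ge 0$ whenever $P \subseteq Q$, so $\phi$ is monotone; this needs no rigid-motion invariance at all. For the converse, suppose $\phi \colon \Pol{\R^d} \rightarrow \R$ is a monotone, rigid-motion invariant valuation. By Lemma~\ref{lem:Kext} it extends to a monotone, translation-invariant valuation on $\Konv_d$, and this extension is again rigid-motion invariant (rotations can be approximated, or one argues directly that the extension inherits invariance since it is built from the values of $\phi$ on polytopes). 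A monotone valuation on convex bodies is in particular bounded on bounded sets, hence continuous in the Hausdorff metric by the standard argument (McMullen), so the extension is continuous, rigid-motion invariant, and monotone.

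Now apply Hadwiger's characterization theorem \cite[Thm.~6.4.14]{SchneiderNew}: every continuous, rigid-motion invariant valuation on $\Konv_d$ is a linear combination $\phi = \sum_{j=0}^d c_j V_j$ of the intrinsic volumes $V_0, \dots, V_d$, where $V_j(K)$ is a suitable normalization of $\MV_d(K^{j}, B^{d-j})$ with $B$ the unit ball. The point is to pin down the signs of the $c_j$. Since $\phi$ is monotone and the $V_j$ are "orthogonal" in the sense that scaling a body by $\lambda$ multiplies $V_j$ by $\lambda^j$, one extracts each coefficient as a limit: $c_d = \lim_{\lambda \to \infty} \phi(\lambda B)/\vol_d(\lambda B) \ge 0$ by monotonicity (comparing $\phi(\lambda B)$ with $\phi(\{0\})$ and dividing), and then inductively $c_{d-1} \ge 0$ by subtracting off the top term and taking the next-order limit, and so on down to $c_0 = \phi(\{0\})$. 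Thus all $c_j \ge 0$. The same sign analysis is of course exactly the content of Bernig--Fu's Theorem~\ref{thm:BF} specialized to the rigid-motion invariant case, and one could cite it directly instead.

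It remains to observe that each intrinsic volume $V_j$ is CM-monotone: by definition $V_j(P)$ is a positive scalar multiple of $\MV_d(P^{j}, B^{d-j})$, but since intrinsic volumes of polytopes can be computed using a polytopal body in place of $B$ (or by approximation, using that CM-monotonicity is a closed condition and Proposition~\ref{prop:MV_CMmono} applies verbatim with $Q_1 = \cdots = Q_{d-j}$ any fixed nonempty polytopes whose mixed volume with powers of $P$ gives $V_j$), Proposition~\ref{prop:MV_CMmono} shows that $P \mapsto \MV_d(P^{j}, Q_1, \dots, Q_{d-j})$ is CM-monotone. Then $\phi = \sum_j c_j V_j$ is a nonnegative linear combination of CM-monotone valuations, hence itself CM-monotone — here we use that $\CM_r$ is linear in $\phi$ and that a nonnegative combination of maps into $\R$ sending the cone $\MM$ into $\R_{\ge 0}$ does the same. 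The "in particular" clause is then immediate since each $V_j$ is rigid-motion invariant and monotone. The main obstacle is the intrinsic-volume step: one must be slightly careful that $V_j$ really is of the form covered by Proposition~\ref{prop:MV_CMmono} with \emph{polytopal} auxiliary bodies (the proposition is stated for polytopes $Q_i$), or else argue that CM-monotonicity passes to Hausdorff limits of valuations, which requires knowing $\CM_r\phi$ depends continuously on $\phi$ in the relevant sense — both routes are routine but should be spelled out.
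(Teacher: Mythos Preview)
Your proof is correct and follows essentially the same route as the paper: extend to $\Konv_d$ via Lemma~\ref{lem:Kext}, invoke Hadwiger's characterization to write $\phi$ as a nonnegative combination of intrinsic volumes, and then use Proposition~\ref{prop:MV_CMmono} together with an approximation/continuity argument to handle the ball. The paper is terser---it absorbs the nonnegativity of the $c_j$ into the (monotone form of) Hadwiger's theorem and dispatches the polytope-versus-ball issue with a single ``appeal to continuity''---but your more explicit discussion of both points is fine.
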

\begin{proof}
    By Lemma~\ref{lem:Kext}, we may assume that  $\phi : \Konv_d \rightarrow
    \R$ is rigid-motion invariant and monotone. Thus 
    \[
        \phi(K) \ = \ c_0 \nu_0(K) + \cdots + c_d \nu_d(K)
    \]
    for $\nu_i = \MV_d(K^i,B_d^{d-i})$ and some $c_0,\dots,c_d \ge 0$.  An
    appeal to continuity and to Proposition~\ref{prop:MV_CMmono} completes the
    proof.
\end{proof}

In~\cite{JS15} it was shown that $\nu_0 = \chi$ and $\nu_d = \vol$ are the
only weakly $h^*$-monotone quermassintegrals. This sorts out the relation
between weakly $h^*$-monotone and CM-monotone valuations for $\LL = \R^d$.
\begin{cor}\label{cor:not_weak}
    If $d>1$, then the inclusion $\MM  \subseteq \WCM$ is strict.
\end{cor}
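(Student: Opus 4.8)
The plan is to use the dictionary between cones in the polytope algebra and classes of valuations from Section~\ref{ssec:cones}. Recall that $\hom_+(\MM,\R)$ is precisely the set of CM-monotone $\R^d$-valuations and $\hom_+(\WCM,\R)$ is precisely the set of weakly $h^*$-monotone $\R^d$-valuations. Hence, if the inclusion $\MM \subseteq \WCM$ of Corollary~\ref{cor:mainCone} were an equality, every CM-monotone $\R^d$-valuation would be weakly $h^*$-monotone. I will contradict this.

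Since $d > 1$, fix an index $i$ with $0 < i < d$ and let $\phi$ be the restriction to $\Pol{\R^d}$ of the $i$-th intrinsic volume $\nu_i(K) = \MV_d(K^i, B_d^{d-i})$. Then $\phi$ is a translation-invariant valuation, hence an $\R^d$-valuation, and it is CM-monotone by Corollary~\ref{cor:rigidmotionCM} (equivalently, by Proposition~\ref{prop:MV_CMmono} applied to polytopal approximations of $B_d$ together with continuity of $\CM_k\phi$). On the other hand, the classification from~\cite{JS15} recalled above says that $\nu_0 = \chi$ and $\nu_d = \vol$ are the only weakly $h^*$-monotone quermassintegrals, so $\phi$ is \emph{not} weakly $h^*$-monotone. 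This contradicts the consequence of $\MM = \WCM$ noted above, so $\MM \neq \WCM$.

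To exhibit concretely an element of $\WCM$ outside $\MM$: since $\phi$ fails weak $h^*$-monotonicity, there are an $\R^d$-simplex $S$ and a facet $F \subset S$ with $\phi(\relint S) + \phi(\relint F) < 0$, i.e.\ $\overline{\phi}(x) < 0$ for the generator $x = \Cls{\relint S} + \Cls{\relint F}$ of $\WCM$. Since $\phi$ is CM-monotone, $\overline{\phi}$ is nonnegative on $\MM$, so $x \notin \MM$; together with Corollary~\ref{cor:mainCone} this gives $\MM \subsetneq \WCM$. I expect the only nonroutine ingredients to be the two cited facts — that $\nu_i$ descends to a CM-monotone valuation on polytopes, which is exactly what the continuity argument behind Corollary~\ref{cor:rigidmotionCM} provides, and the explicit list of weakly $h^*$-monotone quermassintegrals from~\cite{JS15}; everything else is bookkeeping in the cone/valuation correspondence.
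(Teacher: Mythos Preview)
Your proposal is correct and follows exactly the argument the paper intends: the corollary is stated immediately after recalling from~\cite{JS15} that only $\nu_0$ and $\nu_d$ are weakly $h^*$-monotone, so together with Corollary~\ref{cor:rigidmotionCM} any $\nu_i$ with $0<i<d$ is a CM-monotone but not weakly $h^*$-monotone valuation, which via the cone/valuation correspondence forces $\MM \subsetneq \WCM$. Your second paragraph, extracting an explicit witness $x=\Cls{\relint S}+\Cls{\relint F}\in\WCM\setminus\MM$, is a nice elaboration that the paper leaves implicit.
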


Of course the restriction to valuations invariant under rigid motions is
rather strong. Bernig and Fu~\cite{BernigFu} proved a deep result about
monotonicity of translation-invariant valuations and its homogeneous
components that was stated in the introduction.

\begin{reptheorem}{thm:BF}
    Let $\phi : \Konv_d \rightarrow \R$ be a translation-invariant valuation
    and let $\phi = \phi_0 + \cdots + \phi_d$ be the decomposition into
    homogeneous components. Then $\phi$ is monotone if and only if $\phi_i$ is
    monotone for all $i=0,\dots,d$.
\end{reptheorem}

The next lemma will allow us to show CM-monotonicity for general monotone
$\R^d$-valuations.

\begin{lem}\label{lem:bernig}
    Let $\phi : \Konv_d \rightarrow \R$ be a translation-invariant valuation
    homogeneous of degree $\Npolys$ and $\Mix_\Npolys\phi : (\Konv_d)^\Npolys \rightarrow \R$
    the corresponding mixed valuation. Then $\phi$ is monotone if and only if
    $\Mix_\Npolys\phi$ is monotone in each component.
\end{lem}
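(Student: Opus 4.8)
The plan is to reduce the statement to a purely convex-geometric fact about the homogeneous valuation $\phi$ and its polarization. Recall that for a translation-invariant valuation $\phi$ homogeneous of degree $\Npolys$, the function $\phi(\l_1 K_1 + \cdots + \l_\Npolys K_\Npolys)$ is a homogeneous polynomial of degree $\Npolys$ in $\l_1,\dots,\l_\Npolys \ge 0$, and $\Mix_\Npolys\phi(K_1,\dots,K_\Npolys)$ is $\frac{1}{\Npolys!}$ times the coefficient of $\l_1\cdots\l_\Npolys$; it is symmetric and Minkowski-multilinear in its arguments. One direction is immediate: if $\Mix_\Npolys\phi$ is monotone in each component, then $\phi(K) = \Mix_\Npolys\phi(K,\dots,K)$ is monotone, since enlarging $K$ enlarges each slot one at a time. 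So the real content is the forward direction: monotonicity of $\phi$ implies monotonicity of $\Mix_\Npolys\phi$ in each slot.

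First I would fix $K_2,\dots,K_\Npolys$ and consider the degree-$1$ valuation $\psi(K) := \Mix_\Npolys\phi(K,K_2,\dots,K_\Npolys)$, which is translation-invariant, additive under Minkowski sums, and $1$-homogeneous; I must show $\psi$ is monotone. The key is to extract $\psi$ from $\phi$ by a difference-operator / inclusion–exclusion formula: write $f(\l_1,\dots,\l_\Npolys) = \phi(\l_1 K_1 + \cdots + \l_\Npolys K_\Npolys)$, so that $\Npolys!\,\Mix_\Npolys\phi(K_1,\dots,K_\Npolys)$ is recovered as a fixed mixed partial derivative of $f$ at $0$, or equivalently (since $f$ is a polynomial with nonnegative-combination structure coming from monotonicity) as a signed sum $\sum_{I\subseteq[\Npolys]}(-1)^{\Npolys-|I|}\phi(\sum_{i\in I}K_i)$. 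The plan is to exploit Lemma~\ref{lem:Kext}-style extension and the fact that $\phi$, being monotone and translation-invariant on $\Konv_d$, is a difference of monotone valuations only in a controlled way — but more directly, I would invoke the multivariate polynomiality together with the observation that each coefficient of $f$ in the monomial basis, being a mixed volume–type quantity, inherits monotonicity from $\phi$ via the Bernig–Fu machinery applied in one variable at a time. Concretely: enlarging $K_1$ to $L_1 \supseteq K_1$ gives $\phi(\l_1 L_1 + \sum_{i\ge2}\l_i K_i) \ge \phi(\l_1 K_1 + \sum_{i\ge2}\l_i K_i)$ for all $\l \ge 0$ by monotonicity of $\phi$; the difference is again a polynomial in $\l$ that is nonnegative on the positive orthant, and I would argue that its $\l_1\l_2\cdots\l_\Npolys$-coefficient — which is exactly $\Npolys!(\Mix_\Npolys\phi(L_1,K_2,\dots,K_\Npolys) - \Mix_\Npolys\phi(K_1,K_2,\dots,K_\Npolys))$ — is nonnegative.

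The main obstacle is precisely this last inferential step: a multivariate polynomial that is nonnegative on $\R_{\ge0}^\Npolys$ need not have nonnegative coefficients, so one cannot simply read off the sign of the mixed coefficient. This is where the structural input of Bernig–Fu (or rather its proof technique, which is why the acknowledgements thank Bernig) is essential. The plan is to reduce to the case where all of $K_2,\dots,K_\Npolys$ are segments in independent directions (by multilinearity and by approximating, via Minkowski sums of segments, a spanning family — using that zonotopes/segments suffice to separate the relevant mixed coefficients), in which case $\Mix_\Npolys\phi(-,K_2,\dots,K_\Npolys)$ becomes, up to normalization, the restriction of $\phi$ to a suitable projection or a lower-dimensional "derived" valuation to which the one-variable Bernig–Fu result (Theorem~\ref{thm:BF} in dimension $< d$, or its inductive core) applies; monotonicity of $\phi$ passes to this derived valuation because taking a mixed volume against a segment is, geometrically, an integral of a lower-dimensional section, which is monotone slicewise. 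Iterating over the slots $K_2,\dots,K_\Npolys$ and then re-expanding by multilinearity yields monotonicity of $\Mix_\Npolys\phi$ in the first argument, and symmetry gives it in every argument. I expect the write-up to lean on an explicit formula for $\Mix_\Npolys\phi$ against segments together with Fubini, with the genuinely hard analytic estimate being imported from \cite{BernigFu}.
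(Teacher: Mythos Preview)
There is a genuine gap in your segment-reduction step. For a general translation-invariant homogeneous valuation $\phi$, there is no reason why $\Mix_\Npolys\phi(K_1,S_2,\dots,S_\Npolys)$ with segments $S_i$ should be expressible as an integral of lower-dimensional sections of $K_1$; that identity is specific to the volume. Moreover, finite Minkowski sums of segments are zonotopes, which are centrally symmetric and therefore do not approximate arbitrary convex bodies, so multilinearity plus approximation cannot reduce the general $K_2,\dots,K_\Npolys$ to segments. The step ``reduce to segments, then slice/Fubini'' simply does not go through, and without it your plan has no mechanism for passing from nonnegativity of the polynomial $\l \mapsto \phi(\l_1 L_1 + \sum_{i\ge 2}\l_i K_i) - \phi(\l_1 K_1 + \sum_{i\ge 2}\l_i K_i)$ on $\R_{\ge 0}^\Npolys$ to nonnegativity of its $\l_1\cdots\l_\Npolys$-coefficient.

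The paper's argument avoids this entirely by freezing only \emph{one} slot rather than all but one. For fixed $K_\Npolys$, the valuation $\phi^{+K_\Npolys}(K) := \phi(K+K_\Npolys)$ is translation-invariant and monotone, though no longer homogeneous. Apply Theorem~\ref{thm:BF} to $\phi^{+K_\Npolys}$: its degree-$(\Npolys-1)$ homogeneous component $\phi^{+K_\Npolys}_{\Npolys-1}$ is again monotone. From the polynomial expansion one checks
\[
\Mix_{\Npolys-1}\phi^{+K_\Npolys}_{\Npolys-1}(K_1,\dots,K_{\Npolys-1}) \ = \ \Npolys\,\Mix_\Npolys\phi(K_1,\dots,K_{\Npolys-1},K_\Npolys),
\]
so by induction on $\Npolys$ the left-hand side is monotone in each of $K_1,\dots,K_{\Npolys-1}$; symmetry then gives monotonicity in $K_\Npolys$ as well. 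The idea you were missing is that Bernig--Fu should be applied to the \emph{inhomogeneous} monotone valuation $\phi^{+K_\Npolys}$ to peel off one slot at a time, rather than after extracting a mixed coefficient.
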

\begin{proof}
    For $\Npolys=1$, this is certainly true. For $K_1,\dots,K_\Npolys \in \Konv_d$, write
    \[
        \phi(\l_1K_1 + \cdots + \l_\Npolys K_\Npolys) \ = \ \sum_{|\Ba|=\Npolys}
        \binom{\Npolys}{\a_1,\dots,\a_\Npolys}
        \Mix_\Npolys\phi(K_1^{\a_1},\dots,K_\Npolys^{\a_\Npolys})\l^\a.
    \]
    For fixed $K_\Npolys$, the translation-invariant valuation $\phi^{+K_\Npolys}(K) :=
    \phi(K + K_\Npolys)$ is monotone. By Theorem~\ref{thm:BF}, its homogeneous
    components $\phi_i^{+K_\Npolys}$ are monotone for all $i$ and the mixed
    valuation of $\phi_{\Npolys-1}^{+K_\Npolys}$ satisfies
    \[
        \Mix_{\Npolys-1}\phi^{+K_\Npolys}_{\Npolys-1}(K_1,\dots,K_{\Npolys-1})
        \ = \ \Npolys \,
        \Mix_{\Npolys}\phi(K_1,\dots,K_{\Npolys-1},K_\Npolys),
    \]
    which is monotone in each $K_1,\dots,K_{\Npolys-1}$ by induction on $\Npolys$. Since
    $\Mix\phi$ is symmetric, this shows monotonicity in each argument.
\end{proof}

\begin{reptheorem}{thm:Rd-mono}
    Let $\phi : \Pol{\R^d} \rightarrow \R$ be a translation-invariant 
    valuation. Then $\phi$ is monotone if and only if it is CM-monotone.
\end{reptheorem}
\begin{proof}
    If $\phi$ is CM-monotone, then $\phi$ is monotone.  For the converse,  we
    extend $\phi$ to a translation-invariant and monotone valuation on
    $\Konv_d$ by using Lemma~\ref{lem:Kext}.  It follows from the proof of
    Theorem~2.2 in~\cite{HKST} that
    \[
        \CM_\Npolys\phi(P_1,\dots,P_\Npolys) \ = \ \sum_{\Ba}
        \binom{|\Ba|}{\a_1,\dots,\a_\Npolys} \Mix_\Npolys\phi(P_1^{\a_1},\dots,P_\Npolys^{\a_\Npolys}),
    \]
    where the sum is over all $\Ba \in \Z^\Npolys_{\ge 1}$ and, by
    Lemma~\ref{lem:bernig}, $\CM_\Npolys\phi$ is a nonnegative linear
    combination of monotone valuations. 
\end{proof}

Theorem \ref{thm:Rd-mono} suggests the conjecture stated in the introduction.
\begin{repconj}{conj:mono}
    Let $\phi$ be a $\Lambda$-valuation for any $\LL$. Then $\phi$ is monotone
    if and only if $\phi$ is CM-monotone.
\end{repconj}

In support of the conjecture, we can show the following. A valuation $\phi$ is
\Defn{simple} if $\phi(P) = 0$ for all $P \in \PolL$ with $\dim P < \dim \LL$.

\begin{prop}\label{prop:DM_simple}
        If $\phi$ is a simple and nonnegative $\LL$-valuation, then $\phi$ is
        CM-monotone.
\end{prop}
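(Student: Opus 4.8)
The plan is to show that a simple nonnegative $\LL$-valuation $\phi$ is CM-monotone by proving that $\CM_\Npolys\phi(P_1,\dots,P_\Npolys)$ is itself nonnegative and then exploiting the Minkowski additivity in each argument. First I would observe that a simple valuation vanishes on every polytope of dimension $< \dim\LL =: d$, so in the defining sum $\CM_\Npolys\phi(P_1,\dots,P_\Npolys) = \sum_{I\subseteq[\Npolys]}(-1)^{\Npolys-|I|}\phi(P_I)$ only those index sets $I$ with $\dim P_I = d$ contribute a nonzero term; in particular everything is $0$ unless $\Npolys \le d$, and when $\Npolys = d$ we are in the situation where classical mixed-volume-type identities apply. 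The key identity I would invoke is the one already used in the proof of Theorem~\ref{thm:trivialitycond}: when $S = S_1 + \cdots + S_\Npolys$ is exact (a cylinder), $\CM_\Npolys\phi(S_1,\dots,S_\Npolys) = \phi(\Ihalf{u}{S})$ for general $u$, via \cite[Prop.~2.8]{HKST}; more generally, for a simple valuation one can hope to reduce $\CM_\Npolys\phi(P_1,\dots,P_\Npolys)$ to an alternating sum of $\phi$ evaluated on half-open cells of a fine mixed dissection, each of which is nonnegative since $\phi \ge 0$ and $\phi$ is simple (so only the full-dimensional half-open cylinders survive, and on those $\phi$ is a nonnegative combination of values on polytopes).

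Concretely, I would argue as follows. Fix $\LL$-polytopes $P_i \subseteq Q_i$. By Corollary~\ref{cor:CMcls}, $\CM_\Npolys\phi(Q_1,\dots,Q_\Npolys) - \CM_\Npolys\phi(P_1,\dots,P_\Npolys) = \extPhi(\zeta_{(1,\dots,1)})$ where $\zeta_{(1,\dots,1)} \in \Zyl_\Npolys$ is the coefficient of $\binom{n_1}{1}\cdots\binom{n_\Npolys}{1}$ supplied by Theorem~\ref{thm:diff_zyl}. So it suffices to show that $\extPhi$ is nonnegative on $\Zyl_\Npolys$ for every $\Npolys$, i.e.\ that $\phi(\HO{T}) \ge 0$ for every half-open $\Npolys$-cylinder $\HO{T}$. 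Now a half-open $\Npolys$-cylinder $\HO{T} = \HO{T}_1 + \cdots$ has $\dim T$ equal to the sum of the dimensions of the $\Npolys$ positive-dimensional simplices among its summands; if $\dim T < d$ then $\phi(\HO{T}) = 0$ by simplicity, and if $\dim T = d$ then $\HO{T} = T \setminus (F_{i_1}\cup\cdots\cup F_{i_k})$ for some facets, and a standard inclusion–exclusion (using Theorem~\ref{thm:IE}) rewrites $\phi(\HO{T})$ as an alternating sum of values $\phi$ on the closed full-dimensional faces obtained by intersecting with the removed facet hyperplanes — but since $\phi$ is simple, every proper face has dimension $< d$ and contributes $0$, leaving just $\phi(\HO{T}) = \phi(T) \ge 0$. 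Hence $\extPhi(\Zyl_\Npolys) \subseteq G_{\ge 0}$, and CM-monotonicity follows; taking $Q_i = \{0\}$ gives nonnegativity of $\CM_\Npolys\phi$ as a bonus.

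The main obstacle is making the half-open inclusion–exclusion step fully rigorous: one needs that $\phi(\HO{T})$, where $\HO{T}$ is a full-dimensional half-open cylinder, equals $\phi(T)$ for a simple valuation. The cleanest route is to note $\Cls{T} = \Cls{\HO{T}} + \Cls{T \cap (F_{i_1}\cup\cdots)}$ in $\PiL$, and that $\Cls{T\cap(F_{i_1}\cup\cdots)}$ lies in the subgroup of $\PiL$ generated by classes of polytopes of dimension $< d$ — on which $\extPhi$ vanishes because $\phi$ is simple. Formally one should check that simplicity of $\phi$ is equivalent to $\extPhi$ annihilating the ideal/subgroup generated by lower-dimensional classes, which is immediate from Proposition~\ref{prop:universal} and the fact that the generators of that subgroup are differences $e_P - (\text{inclusion–exclusion in lower-dimensional pieces})$. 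Once that bookkeeping is in place, the argument is short; a one-line alternative for readers who prefer it is simply to cite the cylinder identity $\CM_\Npolys\phi(S_1,\dots,S_\Npolys) = \phi(\Ihalf{u}{S})$ together with monotonicity of each $\CM_\Npolys\phi$ in its arguments (Theorem~\ref{thm:repr}) applied after dissecting, exactly as in the proof of Theorem~\ref{thm:trivialitycond}.
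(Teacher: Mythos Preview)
Your core argument (the second and third paragraphs) is correct: using Theorem~\ref{thm:diff_zyl} and Corollary~\ref{cor:CMcls}, the quantity $\CM_\Npolys\phi(Q_1,\dots,Q_\Npolys)-\CM_\Npolys\phi(P_1,\dots,P_\Npolys)$ equals $\extPhi(\zeta_{(1,\dots,1)})$ with $\zeta_{(1,\dots,1)}\in\Zyl_\Npolys$, and for a simple nonnegative $\phi$ one has $\extPhi(\Cls{\HO T})=\phi(T)\ge 0$ on any full-dimensional half-open cylinder (and $0$ otherwise), since the removed facets lie in the subgroup generated by lower-dimensional classes, which $\extPhi$ annihilates.

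The paper's proof is different and shorter: it verifies the weakly $h^*$-monotone condition directly. For a simple valuation, $\phi(\relint S)=\phi(S)$ and $\phi(\relint F)=0$ for any simplex $S$ and facet $F\subset S$, so $\phi(\relint S)+\phi(\relint F)=\phi(S)\ge 0$; then Theorem~\ref{thm:CM_weakly} (equivalently Corollary~\ref{cor:mainCone}) finishes. In effect, both routes pass through Theorem~\ref{thm:diff_zyl}; the paper then uses the inclusion $\Zyl_\Npolys\subseteq\WCM$ from Lemma~\ref{lem:Zyl_chain} together with the one-line verification that $\phi\in\hom_+(\WCM,G)$, whereas you skip Lemma~\ref{lem:Zyl_chain} and verify $\phi\in\hom_+(\Zyl_\Npolys,G)$ directly. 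Your route is slightly more self-contained; the paper's route is more economical and highlights that the class of weakly $h^*$-monotone valuations is the natural intermediate notion. Your first paragraph and the ``one-line alternative'' at the end are exploratory and should be dropped --- the monotonicity of $\CM_\Npolys\phi$ does not follow from Theorem~\ref{thm:repr} alone, and the cylinder identity from \cite{HKST} only handles exact sums, not the general difference you need.
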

\begin{proof}
    If $\phi$ is simple and nonnegative, then
    \[
    \phi(\relint P) + \phi(\relint F) \ = \ 
    \phi(\relint P) \ = \ 
    \phi(P) \ \ge \ 0,
    \]
    holds for any $\LL$-polytope $P$ and facet $F \subset P$. Hence $\phi$ is
    weakly $h^*$-monotone and the result follows from
    Corollary~\ref{cor:mainCone}.
\end{proof}

An idea used in the proof of Lemma~\ref{lem:bernig} also furnishes a
supposedly large class of CM-monotone valuations to draw from.

\begin{prop}\label{prop:addpolytope}
    Let $\phi$ be a CM-monotone $\LL$-valuation and $Q \in \PolL$. Then 
    \[
        P \ \mapsto \ \phi^{+Q}(P) \ := \ \phi(P+Q)
    \]
    is a CM-monotone $\LL$-valuation.
\end{prop}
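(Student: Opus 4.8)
First, $\phi^{+Q}$ is again a $\LL$-valuation; this is the standard fact already recorded in Section~\ref{sec:intro}. So only CM-monotonicity needs an argument, and the plan is to reduce it to CM-monotonicity of $\phi$ by treating the fixed polytope $Q$ as one extra Minkowski summand.

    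The key step is the identity
    \[
        \CM_r\phi^{+Q}(P_1,\dots,P_r) \ = \ \CM_r\phi(P_1,\dots,P_r) \ + \ \CM_{r+1}\phi(P_1,\dots,P_r,Q),
    \]
    valid for all $r \ge 0$ and $P_1,\dots,P_r \in \PolL$. I would verify it straight from~\eqref{eqn:DM}: writing $P_{r+1} := Q$ in $\CM_{r+1}\phi(P_1,\dots,P_r,Q) = \sum_{I \subseteq [r+1]}(-1)^{r+1-|I|}\phi(P_I)$ and splitting the sum according to whether $r+1 \in I$, the index sets avoiding $r+1$ contribute $-\CM_r\phi(P_1,\dots,P_r)$, while those containing $r+1$, written $I = J \cup \{r+1\}$ with $J \subseteq [r]$, contribute $\sum_{J \subseteq [r]}(-1)^{r-|J|}\phi(P_J+Q) = \CM_r\phi^{+Q}(P_1,\dots,P_r)$; rearranging yields the identity. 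The same identity also drops out of Corollary~\ref{cor:CMcls} at once: since $\Cls{P+Q} = \Cls{Q}\Cls{P}$, the valuation $\phi^{+Q}$ corresponds under Proposition~\ref{prop:universal} to $x \mapsto \extPhi(\Cls{Q}\, x)$, and one only has to expand $(\Cls{Q}-1)\prod_{i=1}^r(\Cls{P_i}-1) = \Cls{Q}\prod_{i=1}^r(\Cls{P_i}-1) - \prod_{i=1}^r(\Cls{P_i}-1)$ and apply $\extPhi$.

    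Granting the identity, CM-monotonicity is immediate. For $\LL$-polytopes $P_i \subseteq P_i'$ with $i = 1,\dots,r$, subtract the identity at $(P_1,\dots,P_r)$ from the identity at $(P_1',\dots,P_r')$ to obtain
    \begin{align*}
        \CM_r\phi^{+Q}(P_1',\dots,P_r') - \CM_r\phi^{+Q}(P_1,\dots,P_r) &= \bigl(\CM_r\phi(P_1',\dots,P_r') - \CM_r\phi(P_1,\dots,P_r)\bigr) \\
        &\quad + \bigl(\CM_{r+1}\phi(P_1',\dots,P_r',Q) - \CM_{r+1}\phi(P_1,\dots,P_r,Q)\bigr).
    \end{align*}
    The first bracket is $\ge 0$ because $\phi$ is CM-monotone, and the second is $\ge 0$ by applying CM-monotonicity of $\phi$ to the $r+1$ inclusions $P_1 \subseteq P_1', \dots, P_r \subseteq P_r'$, $Q \subseteq Q$. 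Hence the left-hand side is $\ge 0$, i.e., $\phi^{+Q}$ is CM-monotone. The only idea needed is to promote $Q$ to an additional argument; there is no real obstacle beyond checking the bookkeeping identity above and then invoking the hypothesis twice.
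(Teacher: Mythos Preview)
Your argument is correct and follows exactly the paper's approach: the paper states the same identity $\CM_r\phi^{+Q}(P_1,\dots,P_r) = \CM_{r+1}\phi(P_1,\dots,P_r,Q) + \CM_r\phi(P_1,\dots,P_r)$ and concludes that $\CM_r\phi^{+Q}$ is a sum of valuations monotone in each $P_i$. You simply supply the verification of the identity and spell out the final step, which the paper leaves to the reader.
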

\begin{proof}
    Observe that 
    \[
        \CM_\Npolys\phi^{+Q}(P_1,\dots,P_\Npolys) \ = \ \CM_{\Npolys+1}\phi(P_1,\dots,P_\Npolys,Q)  \
        + \ \CM_\Npolys\phi(P_1,\dots,P_\Npolys) 
    \]
    and hence $\CM_\Npolys\phi^{+Q}$ is a sum of valuations monotone in each
    argument $P_i$.
\end{proof}

For further evidence towards the validity of Conjecture~\ref{conj:mono}, we
finally show that it would imply the Bernig--Fu Theorem~\ref{thm:BF}.

The Euler characteristic $\chi$ yields a ring map $\chi : \Pi \rightarrow \Z$.
If $\LL$ is not a lattice, then it was shown by McMullen~\cite{mcmullenPA}
that $\Pi_+ := \{ x \in \PiL : \chi(x) = 0 \}$ is a $\Q$-vector space spanned
by $\Cls{P} - 1$, $P \in \PolL$. Since $\Cls{P}$ is unipotent, we can define
the \Defn{logarithm} of a polytope $P \in \PolL$ as
\[
    \log P \ := \ \log( 1 + (\Cls{P} - 1)) 
    \ = \ \sum_{k = 1}^d \tfrac{(-1)^k}{k} (\Cls{P} - 1)^k
    \ = \ \sum_{k = 1}^d \tfrac{(-1)^k}{k} \CM_k\Cls{P,...,P}.
\]
It follows that 
\begin{equation}\label{eqn:exp}
    \Cls{n P}  \ = \ \Cls{P}^n \ = \ \exp(n\log P) \ := \ 1 + \sum_{i=1}^d
    \tfrac{\log(P)^i}{i!} n^i.
\end{equation}
For $b \in \Z_{>0}$, we note that $\Cls{P} = \Cls{\frac{1}{b} P}^b$ and
therefore $\log(\frac{1}{b}P) = \frac{1}{b} \log(P)$. Hence, $\Cls{P}^q =
\Cls{qP}$ holds for all $q \in \Q_{>0}$.

\newcommand\Mh[1]{\M^{\langle #1 \rangle}}%
For a $\LL$-valuation $\phi$, let $\phi = \phi_0 + \cdots + \phi_d$ be its
decomposition into homogeneous parts. Then $i!\phi_i(P) = \phi(\log(P)^i)$.
For $1 \le i \le d$ define
\[
    \Mh{i} \ := \ 
    \Znn \left\{ \log(Q)^i - \log(P)^i : P,Q \in \PolL, P \subseteq Q \right\} \ \subset
    \ \PiL_+ \, .
\]
The nontrivial implication of Theorem~\ref{thm:BF} is equivalent to
\[
    \Mh{i} \ \subseteq \ \M
\]
for all $i = 1,\dots,d$. Theorem~\ref{thm:CM-BF} follows directly from the
next result.

\begin{thm}\label{thm:CM_log}
    Let $\LL \subseteq \R^d$ be a $\Q$-vector space of dimension $d$. Then
    \[
        \Mh{i} \ \subseteq \MM
    \]
    for all $1 \le i \le d$.
\end{thm}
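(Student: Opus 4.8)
The goal is to show $\Mh{i} \subseteq \MM$ for all $1 \le i \le d$, i.e.\ that for $\LL$-polytopes $P \subseteq Q$, the class $\log(Q)^i - \log(P)^i$ lies in the mixed monotone cone $\MM = \M_1 + \cdots + \M_d$. The natural strategy is to express the logarithm in terms of the building blocks $\Cls{R} - 1$ and then use Theorem~\ref{thm:diff_zyl} (together with Corollary~\ref{cor:mainCone}, i.e.\ $\Zyl_k \subseteq \Zyl_1 = \WCM$, and the relation between $\WCM$ and $\MM$ established by Corollary~\ref{cor:mainCone}: $\MM \subseteq \WCM$; but here we want the reverse containment direction, so I will be careful). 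Actually the cleanest route is probably via a \emph{polarized} identity: introduce a mixed analogue of the logarithm and show that $\log(Q)^i - \log(P)^i$ is a nonnegative integer combination of mixed-monotone generators.

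First I would set up the multivariate picture. For $\LL$-polytopes $P_1,\dots,P_r$, Corollary~\ref{cor:CMcls} gives $\Cls{n_1 P_1 + \cdots + n_r P_r} = \sum_{\a} \CM\Cls{P_1^{\a_1},\dots,P_r^{\a_r}} \binom{n_1}{\a_1}\cdots\binom{n_r}{\a_r}$, and $\CM\Cls{P_1,\dots,P_r} = \prod_i(\Cls{P_i}-1)$. Taking logarithms and extracting the multilinear part, one obtains that $\log(P_1)\cdots\log(P_r)$ — the fully polarized $r$-fold product — is an explicit $\Z$-linear combination (with mixed-sign rational coefficients coming from the expansion of $\log$) of the classes $\CM\Cls{P_{i_1}^{\a_1},\dots}$, equivalently of classes $\Cls{\HO{T}}$ for half-open cylinders $T$ whose summands are Minkowski sums of the $P_j$'s. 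The key point from Theorem~\ref{thm:diff_zyl}: for $P_i \subseteq Q_i$, the \emph{difference} $\Cls{\sum n_i Q_i} - \Cls{\sum n_i P_i}$ has all its binomial coefficients $\zeta_\k$ in $\Zyl_{|\k|}$, and by Lemma~\ref{lem:Zyl_chain} these all lie in $\Zyl_1 = \WCM$. So differences of mixed classes stay in $\WCM$, and in particular the fully mixed coefficient $\zeta_{(1,\dots,1)}$ — which is exactly $\CM\Cls{Q_1,\dots,Q_r} - \CM\Cls{P_1,\dots,P_r} = \prod(\Cls{Q_i}-1) - \prod(\Cls{P_i}-1)$ — lies in $\WCM$. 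But I need it in $\MM$, not just $\WCM$; since $\MM \subseteq \WCM$ (Corollary~\ref{cor:mainCone}) this is not immediate. The resolution: the generators of $\MM$ are precisely differences $\prod(\Cls{Q_i}-1) - \prod(\Cls{P_i}-1)$ \emph{by definition} of $\M_r$; so what I actually need is to write $\log(Q)^i - \log(P)^i$ as a $\Znn$-combination of such differences.

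So the real work is combinatorial bookkeeping with signs. Write $\log(P) = \sum_{k\ge1} c_k (\Cls{P}-1)^k$ with $c_k = (-1)^{k+1}/k > 0$ for $k$ odd and $< 0$ for $k$ even — the signs alternate, which is the obstacle. Expand $\log(P)^i = \sum_{k_1,\dots,k_i} c_{k_1}\cdots c_{k_i} \prod_j (\Cls{P}-1)^{k_j} = \sum_m d_m^{(i)} (\Cls{P}-1)^m$ for rational $d_m^{(i)}$. Then $\log(Q)^i - \log(P)^i = \sum_m d_m^{(i)} \big[(\Cls{Q}-1)^m - (\Cls{P}-1)^m\big]$, and each bracket is (up to the interpolation $P \subseteq R_0 \subseteq \cdots$? no — directly) a generator of $\M_m$ when $d_m^{(i)} \ge 0$, but a \emph{negative} of one when $d_m^{(i)} < 0$. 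Here is where I would interpolate: insert a chain $P = R_0 \subseteq R_1 \subseteq \cdots \subseteq R_N = Q$ and telescope, but that only helps if monotonicity of $(\Cls{R}-1)^m$ in $R$ is available in $\MM$, which is circular. Instead, the trick I expect the paper uses: use the identity $\log$ being a \emph{ring} statement plus the fact that $\Cls{Q} = \Cls{P}\cdot\Cls{Q-P+P}$... more promisingly, use $\Cls{Q}-1 = (\Cls{P}-1) + (\Cls{Q}-\Cls{P})$ and $\Cls{Q}-\Cls{P} = \Cls{P}(\Cls{Q-P}-1)$-type manipulations only when $Q = P + R$; for general $P \subseteq Q$ one cannot write $Q = P + R$. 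The honest route: clear denominators to work over $\Z$, and show that after clearing, the \emph{net} coefficient arrangement can be regrouped so that all contributing generators carry the correct sign — this uses that $\log(\Cls{Q}) - \log(\Cls{P})$, when both are unipotent and $\Cls{P}$ "divides into" $\Cls{Q}$ in the sense of the filtration, lies in the positive span. Concretely I would invoke that $\Cls{Q}\Cls{P}^{-1} = \exp(\log Q - \log P)$ is again of the form $1 + (\text{something in the augmentation ideal})$, and that $\log Q - \log P = \log(\Cls{Q}\Cls{P}^{-1})$; then the task reduces to: for a unipotent $u = \Cls{Q}\Cls{P}^{-1}$ with $\log u = \log Q - \log P \in \M$ (which holds for $i=1$, the $\Npolys=1$ case being $\M_1 = \M$), show $(\log Q - \log P)^{\text{as part of}} \log(Q)^i - \log(P)^i \in \MM$. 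I would do this by induction on $i$ via the identity $\log(Q)^i - \log(P)^i = (\log Q - \log P)\sum_{j=0}^{i-1}\log(Q)^j\log(P)^{i-1-j}$, reducing to: a product $\ell \cdot \log(R_1)\cdots\log(R_{i-1})$ with $\ell \in \M^{\langle 1\rangle} = \M$ lands in $\MM$. This last reduction is the main obstacle, and I expect it to follow from Theorem~\ref{thm:diff_zyl} applied with the first polytope pair being $P \subseteq Q$ and the remaining arguments being the $R_j$'s (so one writes $\ell = \Cls{Q} - \Cls{P}$ essentially), because then $\zeta_{(1,\dots,1)}$ for that configuration, which is $(\Cls{Q}-1)\prod(\Cls{R_j}-1) - (\Cls{P}-1)\prod(\Cls{R_j}-1) = \ell\cdot\prod(\Cls{R_j}-1)$, lies in $\Zyl_r$; and one must upgrade $\Zyl_r \subseteq \MM$ rather than merely $\subseteq \WCM$.

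The hard part, therefore, is precisely the inclusion $\Zyl_r \subseteq \MM$ (or equivalently, showing that the half-open cylinder classes appearing as $\zeta_\k$ in Theorem~\ref{thm:diff_zyl} can be written as $\Znn$-combinations of mixed-monotone generators $\prod(\Cls{Q_i}-1) - \prod(\Cls{P_i}-1)$, not just of weak-$h^*$-monotone generators). I expect this to be provable by the same dissection/induction as Lemma~\ref{lem:Zyl_chain}: a properly half-open $r$-cylinder $\HO{S} = \HO{S}_1 + \cdots + \HO{S}_r$ satisfies $\Cls{\HO{S}} = \prod_i(\Cls{S_i} - \Cls{F_i})$ for suitable facets (by Proposition~\ref{prop:prod_half} and the half-open product formula), and $\Cls{S_i} - \Cls{F_i}$ is — after interpolating $F_i \subseteq S_i$ through a chain — a telescoping sum of $\M$-generators; multiplying out the product and collecting terms, using that $\M_{k_1}\cdot\M_{k_2} \subseteq \M_{k_1+k_2}$ (which should hold since $\prod(\Cls{Q}-1)$ is multiplicative), gives membership in $\MM$. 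Once $\Zyl_r \subseteq \MM$ is in hand, combining it with the expansion of $\log(Q)^i - \log(P)^i$ via Theorem~\ref{thm:diff_zyl} — taking $P_1 = P, Q_1 = Q$ and running all other polytope arguments over the $P$'s and $Q$'s that appear when one fully polarizes the $i$-th power of the logarithm, then reading off and summing the relevant binomial coefficients with the (now sign-controlled, because everything sits inside a single difference) rational multipliers cleared to integers — yields $\log(Q)^i - \log(P)^i \in \MM$, which is the theorem. I would also double-check the degenerate cases: $\dim Q = \dim P$ versus $\dim Q > \dim P$ is already handled by the two-part structure of the proof of Theorem~\ref{thm:diff_zyl}, so no extra argument is needed there.
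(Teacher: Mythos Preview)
Your plan does not reach a proof, and it misses the idea the paper actually uses.

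The paper's argument is short and entirely different. It exploits the hypothesis that $\LL$ is a $\Q$-vector space (which you never use) to \emph{scale}: since $\Cls{\eps P} = \exp(\eps \log P)$, one has $(\Cls{\eps P}-1)^i = \eps^i\big(\log(P)^i + \eps\cdot r_P(\eps)\big)$ for a polynomial remainder $r_P$. For each rational $\eps>0$, the element $(\Cls{\eps Q}-1)^i-(\Cls{\eps P}-1)^i$ is by definition a generator of $\M_i\subseteq\MM$, so any real-valued $\phi\in\hom_+(\MM,\R)$ satisfies
\[
0 \ \le \ \eps^{-i}\bigl(\CM_i\phi((\eps Q)^i)-\CM_i\phi((\eps P)^i)\bigr) \ = \ i!\bigl(\phi_i(Q)-\phi_i(P)\bigr) + \eps\cdot(\text{bounded}),
\]
and letting $\eps\to 0$ gives $\phi\in\hom_+(\Mh{i},\R)$. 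So the proof works on the dual side (valuations) via a limit; there is no combinatorial bookkeeping of signs at all.

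Your plan, by contrast, tries to realise $\log(Q)^i-\log(P)^i$ directly as a $\Znn$-combination of generators of $\MM$, and each route you sketch runs into an unresolved obstacle. The expansion $\log(P)^i=\sum_m d_m^{(i)}(\Cls{P}-1)^m$ has genuinely sign-changing $d_m^{(i)}$, and you offer no mechanism to cancel the negatives. The factorisation $(\log Q-\log P)\sum_j\log(Q)^j\log(P)^{i-1-j}$ pushes the problem to showing that multiplying an element of $\M$ by $\log(R)^{i-1}$ lands in $\MM$, which is not established. Your proposed ``hard part'' $\Zyl_r\subseteq\MM$ is neither proved in the paper nor needed; and your supporting claim $\M_{k_1}\cdot\M_{k_2}\subseteq\M_{k_1+k_2}$ is dubious, since a product of two differences $\bigl(\prod(\Cls{Q_i}-1)-\prod(\Cls{P_i}-1)\bigr)\bigl(\prod(\Cls{Q'_j}-1)-\prod(\Cls{P'_j}-1)\bigr)$ expands to four terms and is not visibly a single difference of the required shape. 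In short, you are attempting an algebraic cone inclusion head-on, whereas the paper sidesteps all of this by a scaling-and-limit argument on the dual cone of valuations --- which is also precisely why the $\Q$-vector-space hypothesis is there.
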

\begin{proof}
    Let $P$ be a nonempty $\LL$-polytope and write $p = \log(P)$. Then
    $\Cls{P} = 1 + p + \cdots + \frac{p^d}{d!}$ and hence for any $\eps > 0$
    \[
    \eps^{-i} (\Cls{\eps P} - 1)^i \ = \ p^i + \eps^i
        r_P(\eps)
    \]
    where $r_P(\eps)$ is a polynomial in $\eps$ with coefficients in $\PiL_+$.
    Thus, for any $\LL$-valuation $\phi : \PolL \rightarrow \R$, we have that
    if $\phi \in \hom_+(\MM,\R)$, then 
    \[
        0 \ \le \ 
        \eps^{-i} \left( \CM_i\phi(\eps Q^i) - \CM_i\phi(\eps P^i) \right)
        \ = \ i!(\phi_i(Q) - \phi_i(P)) + \eps (\phi(r_Q(\eps)) -
        \phi(r_P(\eps)))
    \]
    for all $\LL$-polytopes $P \subseteq Q$ and all $\eps > 0$. Hence $\phi_i(Q)
    - \phi_i(P) \ge 0$ and $\phi \in \hom_+(\Mh{i},\R)$.
\end{proof}

If $\LL$ is a lattice, then $\PiL_+$ is not a $\Q$-vector space. Hence,  the
above arguments do not apply and other techniques will be needed. In the case
$\LL = \Z^2$, we know that $\CM_2\phi(P_1,P_2) = \rho(\phi) \MV(P_1,P_2)$ for
some $\rho(\phi) \in \R$ and $\CM_1\phi(P) = \phi(P) - \phi(\{0\})$.
Therefore, if $\phi$ is CM-monotone, then $\rho(\phi) \ge 0$ and
Conjecture~\ref{conj:mono} holds for $\Z^2$.

\bibliographystyle{siam}
\bibliography{DiscreteMixedValuations}

\end{document}